\documentclass[11pt,reqno]{amsart}

\usepackage{And-Cha-Gra}

\addbibresource{Euler.bib}

\title[Shock formation in 1D conservation laws: inviscid structure]{Shock formation in 1D conservation laws I:\\Inviscid structure}

\author{John Anderson}
\address{JA: Department of Mathematics, Stony Brook University, Stony Brook, NY 11794, USA}
\email{\href{mailto:jrlanderson@math.stonybrook.edu}{\tt jrlanderson@math.stonybrook.edu}}

\author{Sanchit Chaturvedi}
\address{SC: Courant Institute of Mathematical Sciences, New York University, New York, NY 10012, USA}
\email{\href{mailto:chaturvedisanchit@nyu.edu}{\tt chaturvedisanchit@nyu.edu}}

\author{Cole Graham}
\address{CG: Department of Mathematics, University of Wisconsin--Madison, Madison, WI 53706, USA}
\email{\href{mailto:graham@math.wisc.edu}{\tt graham@math.wisc.edu}}

\date{\today}

\begin{document}

\begin{abstract}
  We study the stability and structure of shock formation in 1D hyperbolic conservation laws.
  We show that shock formation is stable near shocking simple waves: perturbations form a shock nearby in spacetime.
  We also characterize the boundary of the classical development in a spacetime neighborhood of the first time singularity.
  Finally, we describe the precise nature of nondegenerate shock formation through an expansion in homogeneous functions of fractional degree.
  We use these results in a companion paper to study the vanishing viscosity limit near shock formation.
\end{abstract}

\maketitle

\section{Introduction}

We study a hyperbolic system of conservation laws in one spatial dimension:
\begin{equation}
  \label{eq:main}
  \partial_t \psi + A(\psi) \partial_x \psi = 0, \quad t,x \in \R.
\end{equation}
The solution $\psi$ is vector-valued with $N$ components and $A$ is an $N \times N$ advection matrix.
Such conservation laws are ubiquitous in nature and describe systems spanning compressible gases, optical crystals, and elastic solids.
Initially smooth solutions of \eqref{eq:main} often develop shock singularities of great significance in applications.
Here, we study the precise nature of the onset of shocks.

\subsection{Motivation}

While the hyperbolic system \eqref{eq:main} readily forms shocks~\cite{Lax64,John74}, viscous regularizations of \eqref{eq:main} can prevent singularities.
Consider an equation of the form
\begin{equation}
  \label{eq:viscous}
  \rd_t\psi^{(\nu)}+A(\psi^{(\nu)})\rd_x \psi^{(\nu)} = \nu\partial_x[B(\psi^{(\nu)})\partial_x\psi^{(\nu)}]
\end{equation}
with viscosity $\nu > 0$, such as the compressible Navier--Stokes equations.
Under certain conditions, the parabolic regularization in \eqref{eq:viscous} can lead to global well-posedness~\cite{Kawashima87,MelVas}.
This qualitative difference between the inviscid and viscous models sharpens interest in the vanishing viscosity limit $\nu \searrow 0$, in which we formally recover the inviscid solution $\psi = \psi^{(0)}$ of \eqref{eq:main}.
In a companion work~\cite{AndChaGra25b}, we study the vanishing viscosity limit in strong norms near shock formation, where this difference first manifests.
Our analysis relies on the detailed description of \emph{inviscid} shock formation developed here.

In scalar ($N = 1$) and $2\times 2$ systems, the existence of Riemann invariants permitted Riemann~\cite{Riemann} and Lax~\cite{Lax64} to prove shock formation for a large class of initial conditions.
For larger systems $(N \geq 3)$, the absence of such invariants has largely confined the study of singularity formation to small data~\cite{John74,Sideris85,Alinhac01, Alinhac02, Alinhac03,CP16}.
We are interested in the extension of such results to wider classes of initial data.
Inspired by work on the compressible Euler equations in multiple dimensions~\cite{SHLW,LukSpe01,LukSpe02}, we show that solutions near so-called simple waves form shocks.
Though still perturbative, these data can be arbitrarily large in amplitude, and thus lie well outside the small-data setting.

Plasmas, elastic materials, and crystals are known to form shocks along waves of intermediate speed.
It is thus of physical interest to prove that such shocks can form from an open set of initial data.
Intermediate waves pose special challenges, and in fact interactions between intermediate and faster and slower waves have the potential to destabilize large simple waves.
Nonetheless, under a very mild nondegeneracy assumption, we show that intermediate waves form shocks in a stable manner.
This additional difficulty is a feature of the large data considered here; it does not arise in the small-data setting.

Finally, due to finite speed of propagation, the classical evolution of hyperbolic PDEs need not stop at the first time of singularity.
Recently, the maximal domain of classical evolution, or \emph{maximal development}, has attracted much attention.
We describe the maximal development of \eqref{eq:main} in a spacetime neighborhood of shock formation under no genericity assumptions.

\subsection{Setup}

We now describe our equation and solutions in detail.
The solution $\psi$ takes values in an $N$-dimensional vector space $\m{V} \cong \R^N$ that we term the ``state space.''
It represents a collection of physical variables such as mass and momentum density.
For the advection, we assume:
\begin{enumerate}[label={(H\arabic*)},series=hyp,itemsep=3pt]
\item
  \label{hyp:smooth-A}
  $A \colon \m{V} \to \m{V} \otimes \m{V}^*$ is smooth.

\item
  \label{hyp:strict-hyp}
  $A$ has $N$ distinct eigenvalues $\lambda_{(I)}$ with right eigenvectors $r_I$.

\item
  \label{hyp:gen-non}
  Some eigenvalue $\lambda_{(I_0)}$ is genuinely nonlinear, meaning $\inf \Der \lambda_{(I_0)} \cdot r_{I_0} > 0$.
\end{enumerate}
We do \emph{not} require $A$ to be the derivative of a flux function, so our analysis extends to nonlinear advection equations \eqref{eq:main} that are not conservative in nature.
Nonetheless, for the sake of familiarity we refer to \eqref{eq:main} as a conservation law, and we are primarily motivated by physical systems with true conservative structure.

Our analysis is dominated by the genuinely nonlinear $I_0$-characteristic, so \ref{hyp:strict-hyp} could be relaxed to the hypothesis that $I_0$ alone has multiplicity one.
The nonshocking characteristics could have higher and variable multiplicity, but we work in the strictly hyperbolic framework \ref{hyp:strict-hyp} for simplicity.

The genuine nonlinearity condition \ref{hyp:gen-non} was introduced by Lax~\cite{Lax64} and is intimately connected with shock formation.
The eigenvalue $\lambda_{(I_0)}$ is the speed of the $I_0$-characteristic.
Genuine nonlinearity allows fast portions of the solution to overtake slow, causing a shock.

Hypotheses~\ref{hyp:smooth-A}--\ref{hyp:gen-non} constitute the framework studied by John~\cite{John74} with small data.
Here, we consider perturbations of (potentially large) ``simple waves.''
\begin{definition}
  A solution $\Theta$ of \eqref{eq:main} is a \emph{simple wave} associated to the eigenvalue $\lambda_{(I_0)}$ if it is constant along the $I_0$ characteristic.
  That is, $\partial_t \Theta + \lambda_{I_0}(\Theta) \partial_x \Theta = 0.$
\end{definition}
For more details on the construction and nature of simple waves, see Section~\ref{sub:simple-waves} below.
We perturb around a simple shocking wave $\Theta$:
\begin{enumerate}[hyp]
\item
  \label{hyp:simple}
  $\Theta$ is a simple wave associated to $\lambda_{(I_0)}$ that is initially smooth but forms a shock in finite time.
\end{enumerate}

This wave represents the ``background'' shocking behavior, and we are naturally interested in its stability.
We find that this depends on whether $\lambda_{(I_0)}$ is an extremal or intermediate speed.
For intermediate shocks, we need $\Theta$ to be nondegenerate in a very mild sense; see Condition~\ref{cond:L1Th} below.
No such condition is needed when $\lambda_{(I_0)}$ is extremal.
For more details on this technical consideration, see Sections~\ref{sec:proof-over} and \ref{sec:reneq}.

Under these weak conditions, we show that shock formation is stable (Theorem~\ref{thm:hyperbolic}).
If $\Theta$ is nondegenerate in a stronger sense (Condition~\ref{cond:genericity}), we can further describe $\psi$ to arbitrary order near shock formation (Theorem~\ref{thm:homexprough}).

\subsection{Main results}

We now state informal versions of our main results.
\begin{theorem}
  \label{thm:hyperbolic}
  Assume \ref{hyp:smooth-A}--\ref{hyp:simple}, $N \geq 2$, and $\Theta$ is either extremal or mildly nondegenerate.
  Let $\psi(0, \anon)$ be smooth and $\m{C}^1$-close to $\Theta(0, \anon)$.
  Then $\psi$ develops a shock near that of $\Theta$ and remains uniformly smooth in $t$ and the $I_0$-eikonal function up to its shock time.
  Moreover, we can classify the boundary of the maximal globally hyperbolic development in a spacetime neighborhood of the first singularity.
\end{theorem}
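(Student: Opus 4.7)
The plan is to work in geometric coordinates adapted to the shocking $I_0$-characteristic, so that the singularity manifests as a Jacobian degeneracy rather than a blowup of $\psi$ itself. Following the approach pioneered by Alinhac and Christodoulou, I introduce the $I_0$-eikonal function $u$ solving $\partial_t u + \lambda_{(I_0)}(\psi) \partial_x u = 0$ with $u(0, x) = x$, and pass to coordinates $(t, u)$. The key geometric quantity is the inverse density $\mu := (\partial_x u)^{-1}$, which is transported along the $I_0$-characteristic and is expected to vanish precisely where characteristic compression produces the shock. In these coordinates $\psi$ should remain uniformly smooth up to $\mu = 0$, so the statement ``smooth in $t$ and the $I_0$-eikonal function'' is precisely a bound on $\psi$ viewed as a function of $(t, u)$.

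Next, I decompose $\partial_x \psi = \sum_I \alpha_{(I)} r_{(I)}(\psi)$ into eigendirections and derive the transport system for the scalar components $\alpha_{(I)}$. The shocking component $\alpha_{(I_0)}$ blows up at the shock, but its renormalization $\check\alpha_{(I_0)} := \mu \alpha_{(I_0)}$ satisfies a regular transport equation along the $I_0$ field, while the non-shocking $\alpha_{(I)}$ for $I \neq I_0$ are advected along their own characteristics with source terms quadratic in the $\alpha_{(J)}$'s. The $\mu$-equation has a leading source proportional to $\Der \lambda_{(I_0)} \cdot r_{(I_0)} \cdot \check\alpha_{(I_0)}$; by \ref{hyp:gen-non} and the assumption \ref{hyp:simple} that $\Theta$ shocks, this term is strictly negative on the background and drives $\mu \to 0$ in finite time.

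With the renormalized system in hand, I run a bootstrap/continuity argument on a time interval $[0, T_\ast)$: assume $C^k$ bounds on $(\psi, \check\alpha_{(I_0)}, \alpha_{(I)}, \mu)$ in geometric coordinates and improve them via energy and transport estimates, using the $C^1$-closeness at $t = 0$ (and smoothness of initial data for higher derivatives) as the base case. The main obstacle is the intermediate-wave case: when $\lambda_{(I_0)}$ is not extremal, transversal characteristics approach the shocking region from both sides, and on a large-amplitude background $\Theta$ the nonlinear interaction integrals governing $\alpha_{(I)}$ for $I \neq I_0$ can be very large or even divergent. The mild nondegeneracy Condition~\ref{cond:L1Th} is tailored to make these integrals summable, yielding uniform bounds on $\alpha_{(I)}$ up to the shock time. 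In the extremal case this difficulty does not arise because the non-$I_0$ characteristics exit the relevant region, and no nondegeneracy is required.

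Finally, I classify the boundary of the maximal globally hyperbolic development in a neighborhood of the first singularity. The preshock is the first $(t_\ast, u_\ast)$ where $\mu(t_\ast, u_\ast) = 0$; because $\mu$ is smooth in $(t, u)$ and $\partial_t \mu$ along the $I_0$-characteristic is strictly negative at the preshock by \ref{hyp:gen-non} and \ref{hyp:simple}, the implicit function theorem presents the zero set $\{\mu = 0\}$ as a smooth curve in $(t, u)$ emanating from $(t_\ast, u_\ast)$. Pushing this curve forward to Cartesian coordinates by the degenerate change of variables $x = x(t, u)$ produces, on one side of $(t_\ast, x_\ast)$, the singular boundary along which characteristics cross, and on the other side the Cauchy horizon bounding the domain of classical evolution, giving the promised classification.
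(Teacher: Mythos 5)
Your setup for the shock-formation half matches the paper's: the eikonal coordinates $(t,u)$, the inverse density $\mu$, the eigendecomposition of $\partial_x\psi$ with the renormalized shocking component $\mu(\partial_x\psi)^{I_0}$, and a bootstrap in geometric coordinates are exactly the ingredients of Theorem~\ref{thm:SF}. However, in the intermediate case you only assert that Condition~\ref{cond:L1Th} ``makes the interaction integrals summable.'' The actual difficulty (Section~\ref{sec:badlin}) is a linear, not nonlinear, one: the transversal system has characteristics entering from both sides, so it behaves like a boundary-value problem and a naive Gr\"onwall can produce growth like $(1-\tau)^\gamma$ with $\gamma<0$. The paper's resolution is a two-region argument: away from the shock, Condition~\ref{cond:L1Th} gives $\mu\ge\delta_2$ and one runs Gr\"onwall along a tilted time function (Proposition~\ref{prop:r1est}); near the shock, the condition confines the degeneracy to a $u$-interval of length $\le 2\eta$, so the causal width is small and the coefficients are small in $L^1$ along transversal characteristics, which closes a bootstrap without Gr\"onwall (Proposition~\ref{prop:r2est}). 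Your sketch does not contain this splitting, and without it the claimed uniform bounds on the transversal components do not follow.

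The more serious gap is the classification of the MGHD boundary. You identify the boundary near the first singularity with the push-forward of the zero set $\{\mu=0\}$, presented as a smooth curve via the implicit function theorem, with ``singular boundary on one side, Cauchy horizon on the other.'' This is wrong on three counts. First, the Cauchy horizon is \emph{not} contained in $\{\mu=0\}$: it consists of points where $\mu\ne 0$ but $\mu$ vanishes somewhere in the causal past, and it is generated by the extreme characteristics emanating from the preshock. The correct detector is the causal-past infimum $\mu^*$ of \eqref{eq:mus} (and its truncation $\muts$), whose zero set---not that of $\mu$---is the future boundary; most of $\{\mu=0\}$ lies beyond the horizon and is not part of the boundary at all. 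Second, the one-of-each picture holds (generically) only for extremal shocks; when $I_0$ is intermediate the paper shows $\m{B}^{\text{sing}}=\emptyset$ and \emph{both} branches are Cauchy horizon, precisely because a faster characteristic from any putative singular-boundary point would dip into the region where $\mu$ already vanished. Third, Theorem~\ref{thm:hyperbolic} assumes only mild nondegeneracy, under which the minimum of $L\mu$ may be attained on an interval: the preshock set need not be a point, the zero set of $\mu$ need not be a smooth graph near it, and the boundary is only Lipschitz in general---sharply so, by Proposition~\ref{prop:Bcurvature}. Your smooth-curve argument implicitly assumes the strong nondegeneracy of Condition~\ref{cond:genericity}, which is not available here.
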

\noindent
For a precise statement, see Theorem~\ref{thm:SF} below.

To show Theorem~\ref{thm:hyperbolic}, we change to a geometric coordinate system in which $\psi$ remains smooth.
This is based on the widely-used eikonal function, defined in \eqref{eq:eikonal} below.
The shock blowup is encoded in the change between physical and eikonal coordinates, which necessarily degenerates.
We use geometric methods pioneered by Christodoulou~\cite{Christodoulou01} to prove this degeneracy.
The resulting estimates also allow us to characterize the maximal globally hyperbolic development (MGHD) near shock formation.
For a description of MGHDs, see Appendix~\ref{sec:appendix1}.

Under additional nondegeneracy, Theorem~\ref{thm:hyperbolic} allows us to develop a complete homogeneous expansion for $\psi$ about shock formation.
\begin{theorem}
  \label{thm:homexprough}
  Assuming further nondegeneracy, up to the shock time, $\psi$ admits an expansion in homogeneous functions near the first singularity.
  Modulo a Galilean transformation, the leading part is an inverse cubic $\cub$ in the shocking component alone solving $x = -at \cub + b \cub^3$ for some $a,b \neq 0$ of the same sign.
\end{theorem}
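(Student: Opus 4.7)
The plan is to leverage Theorem~\ref{thm:hyperbolic}, which gives uniform smoothness of $\psi(t,u)$ and of the physical coordinate $x(t,u)$ in the geometric coordinates, with $u$ the $I_0$-eikonal function. In these variables I would Taylor-expand both $\psi$ and $x$ about the shock point $(t_*,u_*)$. Genuine nonlinearity~\ref{hyp:gen-non} together with the definition of $(t_*,u_*)$ as the first shock time forces $\partial_u x(t_*,u_*) = 0$ and $\partial_t \partial_u x(t_*,u_*) < 0$. Subtracting the linear drift $\lambda_{(I_0)}(\Theta(t_*,x_*))(t-t_*)$---the Galilean transformation in the statement---removes the remaining linear dependence on $t$ in $x-x_*$; I shall still denote the shifted coordinates by $(t,x)$.

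The stronger nondegeneracy Condition~\ref{cond:genericity} is designed to ensure further that $\partial_u^2 x(t_*,u_*) = 0$ and $\partial_u^3 x(t_*,u_*) \ne 0$. In the shifted coordinates the leading change of variables is thus
\begin{equation*}
  x = -a\, t\, \phi + b\, \phi^3 + \text{higher order}, \qquad \phi := u-u_*,
\end{equation*}
with $a,b \ne 0$ sharing a sign; the sign of $b$ is fixed by the fact that, generically, the shock forms at a first-order minimum of $\partial_u x$. I would then define $\cub(t,x)$ as the real root of the truncated cubic $x = -a t\cub + b\cub^3$, which is single-valued and continuous precisely because $a$ and $b$ are cosigned, and which is homogeneous of degree $1$ under the weighted rescaling $(t,x)\mapsto(\mu^2 t,\mu^3 x)$, $\cub\mapsto\mu\cub$. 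The leading behavior of $\psi$ then takes the form $\psi = \Theta(t_*,x_*) + (\partial_u\psi)(t_*,u_*)\,\cub + \text{higher order}$, and since $(\partial_u\psi)(t_*,u_*)$ is parallel to $r_{I_0}$ this leading correction lives in the shocking component alone.

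To upgrade this leading behavior to a full expansion I would iterate: at each weighted order, write the residual of $\psi$ in geometric coordinates, Taylor-expand in $(t,\phi)$, and substitute $\phi = \cub + (\text{strictly higher weight})$ using the full expansion of $x(t,u)$. Each monomial $t^i\phi^j$ then reorganizes into a finite combination of homogeneous functions in $(t,x,\cub)$. The main obstacle is this inversion step: because $\partial_\phi x$ vanishes along the cuspidal curve $t = 3b\phi^2/a$, solving for $\phi$ order by order requires a careful analysis of the ideal generated by the defining relation $x + a t\cub - b\cub^3 = 0$, and one must verify that each correction is a well-defined homogeneous function---H\"older-continuous in $(t,x)$---uniformly up to the shock time and through the cusp. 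Packaging these estimates for the coupled hyperbolic system, and verifying that the non-shocking components contribute only at integer weights (since they are smooth in geometric coordinates and transverse to the degeneration), is where the bulk of the work lies; the remainder is systematic bookkeeping of homogeneous degrees.
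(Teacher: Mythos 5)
Your outline follows the same route as the paper's actual proof (Section~\ref{sec:homogeneous}): eikonal-coordinate smoothness from Theorem~\ref{thm:SF}, the normal form $x \approx -a t\,(u-u_*) + b\,(u-u_*)^3$ at the preshock obtained from Condition~\ref{cond:genericity} (Proposition~\ref{prop:expansion1}), a Galilean gauge to kill the drift and center the preshock, inversion of this relation to expand $u$ in homogeneous functions, and composition with the $(\tau,u)$-smoothness of $\psi$ to transfer the expansion. So the strategy is right; but two steps are asserted rather than argued, and they are exactly where the paper's work lies.

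First, the parallelism of $(\partial_u\psi)(t_*,u_*)$ to $r_{I_0}$ is the entire content of ``the leading part lies in the shocking component alone'' and is not automatic for a perturbed simple wave; the paper derives it from Theorem~\ref{thm:SF}~\ref{item:fund-smooth}: the nonshocking derivatives $(\partial_x\psi)^{I'}$ stay bounded, so $(\partial_u\psi)^{I'} = \mu\,(\partial_x\psi)^{I'} \to 0$ as $\mu \to 0$ at the preshock, and the coefficient of $\cub$ is then pinned down by extracting the leading $-1$-homogeneous part of the eikonal equation (Theorem~\ref{thm:psi-hom-exp}). Second, the inversion step you flag as ``where the bulk of the work lies'' is indeed the crux, and your proposal leaves it unexecuted: the vanishing of the linearization coefficient $a_0\abs{t} + 3b_0\fu^2 = \fm^{-1}$ at the preshock is what must be beaten, and an appeal to ``the ideal generated by the defining relation'' does not by itself produce uniform estimates through the cusp. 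The paper's mechanism is concrete: for the base case one writes $u(t,x) = \fu\big(t, x - h(t,x)\big)$ with $h = F(t,u)$ bounded by $\m{O}(\fd^4)$ via \eqref{eq:uxfu}, and a mean-value-theorem argument split into the regimes $\abs{t} \lesssim \abs{x}^{2/3}$ and $\abs{x}^{2/3} \lesssim \abs{t}$ gives $u = \fu + \m{O}(\fd^2)$ together with derivative bounds (Lemma~\ref{lem:uest}, Proposition~\ref{prop:usolve}); for the recursion, the cubic relation linearized about $\fu$ has coefficient $\fm^{-1} + \m{O}(\fd^3)$, so each correction is obtained by multiplying the current residual by the admissible homogeneous function $\fm$, producing polynomials in $(t,\fu,\fm)$ of increasing homogeneity (Proposition~\ref{prop:eikonal-hom}). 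With those two ingredients supplied, your plan coincides with the paper's proof.
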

\noindent
See Theorem~\ref{thm:psi-hom-exp} for a precise statement.

The self-similar profile $\cub$ also arises in generic shock formation in the Burgers equation.
In~\cite{CG23}, the second and third authors used an analogous expansion to describe the vanishing viscosity limit of Burgers in strong norms near shock formation.
In this project, we do the same for systems of conservation laws.
The present paper contains our inviscid analysis, and our companion paper~\cite{AndChaGra25b} uses Theorem~\ref{thm:homexprough} to treat the vanishing viscosity limit of \eqref{eq:viscous}.

We have assumed for convenience that $A$, $\Theta(0,\anon)$, and $\psi(0,\anon)$ are $\m{C}^\infty$.
In reality, Theorem~\ref{thm:hyperbolic} only requires some finite degree of regularity.
Similarly, if our inputs are $\m{C}^k$ for large $k$, the expansion in Theorem~\ref{thm:homexprough} will hold to some large degree of approximation depending on $k$.
We do not seek to quantify or optimize this dependence, though it can in principle be determined from our proofs.

Likewise, as mentioned above, our assumption \ref{hyp:strict-hyp} of strict hyperbolicity is largely a matter of convenience.
We are strongly motivated by the compressible Euler equations, which satisfy \ref{hyp:strict-hyp}.
Nonetheless, an examination of the proof shows that we only make use of the simplicity of the shocking eigenvalue $\lambda_{(I_0)}$.
Other eigenvalues can have higher or variable multiplicity.

If the shocking eigenvalue has higher (constant) multiplicity, some of the arguments here and in \cite{AndChaGra25b} carry through, and others do not.
Shocks with multiplicity do arise in certain models of magnetohydrodynamics and elasticity, and such equations have been the subject of recent investigation~\cite{AnChe21,AnCheYin22a,AnCheYin22b,AnCheYin23}.
The extension of our efforts to shocks with multiplicity strikes us as an interesting direction for future work.

Finally, our results are perturbative around simple waves, but we expect this setting to be generic.
Namely, we speculate that typical first singularities in strictly hyperbolic, genuinely nonlinear conservation laws are isolated shocks forming along a single characteristic.
(The preceding conditions preclude vacuum formation in fluids, for example.)
In this case, shortly before shock formation, a rescaling of the solution would be close to a simple wave, and would thus satisfy our hypotheses.
We leave this exploration of generic large data to future inquiry.

\subsection{Related works}

Our study of shock formation joins a vast literature on singularities in conservation laws.
Here we describe a number of related efforts, though we cannot be exhaustive.
We focus on the inviscid theory, as our companion paper~\cite{AndChaGra25b} discusses the literature on regularization and vanishing viscosity.
For an overview of hyperbolic conservation laws and shocks, we direct the reader to Dafermos \cite{Dafermos}, Liu \cite{Liu_2021}, and the references therein.

\subsubsection{Well-posedness of conservation laws}
Due to the possibility of singularity formation from smooth data, conservation laws are often studied in a weak framework.
In celebrated work, Glimm proved global existence for solutions of 1D genuinely nonlinear conservation laws with small-BV data~\cite{Glimm65}.
This compactness construction inspired numerous advances \cite{DiPerna76, Bressan92, BaiJen98, Risebro93}, but does not guarantee uniqueness.
Nevertheless, uniqueness is now known thanks to work of Bressan and LeFloch \cite{Bressan95,BreLeF97,Bressan00}.

In higher dimensions there is no known global theory, and in fact BV does not provide a good framework if used the most direct sense~\cite{rauch1986bv}.
We are still at the stage of providing examples of global configurations with shocks, as in very interesting recent work of Ginsberg and Rodnianski \cite{GinRod24}.

\subsubsection{Shock formation in $1$D}
In $1 + 1$ settings, shock formation goes back to Riemann~\cite{Riemann}.
Building on Riemann invariants, Lax~\cite{Lax64} introduced the notion of genuine nonlinearity and established singularity formation for such $2\times 2$ systems from a large class of initial data.
In his classical work, John~\cite{John74} proved small-data singularity formation for larger genuinely nonlinear systems.
Other recent works in $1 + 1$ include \cite{Liu79,CP16,BucDriShkVic_2021,NSV23,NeaRicShkVic23,NeaShkVic24}.
Our setting is perhaps closest to that of John~\cite{John74} and Christodoulou and Perez~\cite{CP16}.
We note that a key idea in some modern approaches to shock formation originated in work of Keller and Ting \cite{KelTin66}, who studied the density of characteristic curves.
We discuss the importance of this quantity in Section~\ref{sec:proof-over}.

An, Chen, and Yin have recently studied $1 + 1$ shocks in the absence of strict hyperbolicity~\cite{AnChe21,AnCheYin22a,AnCheYin22b,AnCheYin23}.
These works also develop shock formation into ill-posedness in certain spaces, using a connection first explored by Granowski~\cite{Gra18}.

\subsubsection{Higher dimensional shock formation}
The formation of singularities in higher dimensions was first studied by John~\cite{Joh81} and Sideris~\cite{Sideris85}.
These initial results did not provide detailed descriptions of the nature of the singularities.
In an impressive sequence of works, Alinhac~\cite{Alinhac01, Alinhac02, Alinhac03}, showed the formation of nondegenerate shock singularities that are, roughly speaking, isolated within the constant-time hypersurface of first blowup.

The problem was revisited by Christodoulou in his landmark work~\cite{Christodoulou01} on isentropic irrotational relativistic Euler.
This developed several important geometric ideas for the study of shock formation and sparked many interesting followups~\cite{ChrMia,SHLW,Speck16,MiaYu17}.
One important breakthrough is due to Luk and Speck~\cite{LukSpe01}, who were the first to allow for additional wave speeds, studying compressible Euler with nontrivial vorticity up to the first singularity.
More recent works permitting multiple wave speeds include \cite{BucShkVic_2019, LukSpe02, AbbSpe, ShkVic_2024}.
Some of these also construct (interesting portions of) the maximal globally hyperbolic development, described further below.

The work \cite{BucShkVic_2019} is part of a remarkable program by Buckmaster, Shkoller, and Vicol to understand shock formation at the point of first singularity.
In~\cite{BucShkVic_2019, BucShkVic_2020, BucShkVic_2020.2}, the authors use modulation theory to describe shock formation in terms of a self-similar solution to the Burgers equation.
This plays a role analogous to $\cub$ in Theorem~\ref{thm:homexprough}.
In addition, Buckmaster and Iyer~\cite{BuckIye_2020} show the formation of unstable shocks in two-dimensional polytropic compressible Euler.

\subsubsection{The maximal globally hyperbolic development}
Most studies of shock formation to date are adapted to ``constant $t$'' hypersurfaces, so they only grant access to the solution up to the time this foliation first hits the singularity.
One would like to progress beyond this, to a maximal region in which the solution can be defined classically.
The combination of a maximal region and its classical solution is termed a maximal globally hyperbolic development (MGHD).
For details, see Appendix~\ref{sec:appendix1}.
Beyond their intrinsic interest, MGHDs play an important role in the shock development problem discussed below.

Christodoulou's book \cite{Christodoulou01} treats constant-$t$ hypersurfaces, but he also describes how to adapt the arguments to other hypersurfaces and regions of spacetime.
This allows him to explore the causal structure of an interesting portion (depending on finer properties of the data) of the boundary of the MGHD.
Since then, there has been much work explicitly characterizing portions of the MGHD boundary in the presence of vorticity and entropy, under suitable genericity conditions.
For a general introduction and a construction in $1 + 1$ dimensions see~\cite{AbbSpe23}.

In the very difficult higher dimensional case, we highlight two particular works.
In \cite{AbbSpe}, Abbrescia and Speck construct the entire ``crease'' (the set of preshocks) and a portion of the singular boundary in the $3 + 1$ compressible Euler equations under genericity in one direction.
And in \cite{ShkVic_2024}, Shkoller and Vicol construct the boundary of a generic MGHD for the $2 + 1$ isentropic Euler equations in a spacetime neighborhood of the first singularity, including preshocks, singular boundary, and Cauchy horizon.
We describe these different boundary types in Section~\ref{sec:SFstatements}.
We emphasize that both these exciting works allow for acoustical and transport phenomena.

\subsubsection{The germ of first singularities}
The homogeneous expansion we construct in Theorem~\ref{thm:homexprough} expresses the germ of a nascent shock.
A number of works describe the germs of singularities via expansion, including some mentioned above~\cite{Christodoulou01,BucShkVic_2019,BucShkVic_2020,BucShkVic_2020.2,BucDriShkVic_2021,LukSpe02,AbbSpe,NSV23,NeaRicShkVic23,ShkVic_2024}.
The homogeneous expansion we use comes from \cite{CG23}, and is packaged to assist the study of the vanishing viscosity limit.
This inverse-cubic behavior is the only known stable configuration in high-regularity settings, but see \cite{NeaRicShkVic23} for other examples in low regularity.

\subsubsection{Post-shock evolution}
On physical grounds, we should be able to extend solutions of conservation laws in a weak class past the moment when a shock (or other singularity) forms.
This is known for small-BV solutions in one dimension, as such weak solutions are global in time~\cite{Glimm65}.
For large data and in higher dimensions, this remains a formidable problem.

Post-shock evolution has been constructed in certain symmetry-reduced settings, where solutions are functions of two variables.
For example, see \cite{yin04,ChrLis16} for spherically symmetric Euler and \cite{BucDriShkVic_2021} for Euler under azimuthal symmetry, building on work on the $p$-system initiated by Lebaud \cite{Leb94}.
The azimuthal symmetry in \cite{BucDriShkVic_2021} is particularly challenging, as it permits nonzero vorticity.
Without symmetry, Yin and Zhu have treated scalar conservation laws in higher dimensions~\cite{YinZhu22}.
For systems, the only existing work is due to Christodoulou \cite{Chris_2019}, who studies a different weak formulation of the compressible Euler equations.
This formulation forces the flow to remain isentropic and irrotational, allowing the solution to be represented by a scalar fluid potential.
In physical fluids, shock formation generally introduces entropy and vorticity, so this  ``shock development problem'' remains a fundamental open question.

\subsubsection{Singularity formation in related models}

While we focus on inviscid equations, singularities can develop in the face of regularizing terms if the latter are sufficiently weak.
For example, much effort has focused on shock formation (or global well-posedness) in the one-dimensional Burgers equation with fractional diffusion or dispersion~\cite{KiNaSh_2008, DongDuLi_2009, AliDroVov_2007,ChiMorPan_2021,Yang_2020,yang22,OhPas_2021}.

In higher dimensions, recent breakthroughs of Merle, Rapha\'{e}l, Rodnianski, and Szeftel show that other singularities can form in the presence of traditional diffusion.
The authors construct smooth implosion solutions to the barotropic Euler equations~\cite{MeRaRoSz01} and use them to exhibit implosion singularities in 3D compressible Navier--Stokes~\cite{MeRaRoSz02} and higher-dimensional nonlinear Schr\"odinger equations~\cite{MeRaRoSz03}.
We refer the reader to \cite{TrCaGo23,cao2023non} for a different approach to an implosion construction and for non-radial implosion in compressible Euler and Navier--Stokes.
Very recently, Chen, Cialdia, Shkoller, and Vicol~\cite{CCSV} and Chen~\cite{Chen} have proved vorticity blow up for the compressible Euler equations in $2$D and $3$D, respectively.

\subsection{Organization}

We outline our strategy in Section~\ref{sec:proof-over} and introduce geometric notation in Section~\ref{sec:sfbasicsetup}.
Section~\ref{sec:STheorems} contains our rigorous formulation of shock formation and stability (Theorem~\ref{thm:SF}).
We develop convenient forms for our equations in Section~\ref{sec:reneq} and prove our essential hyperbolic estimates in Section~\ref{sec:HyperbolicEsts}.
We apply these in Section~\ref{sec:SF} to prove Theorem~\ref{thm:SF}, and thereby Theorem~\ref{thm:hyperbolic}.
In Section~\ref{sec:homogeneous}, we construct a homogeneous expansion for $\psi$ and prove Theorem~\ref{thm:homexprough}.
Appendix~\ref{sec:appendix1} reviews standard notions of causality and defines the MGHD.

\section*{Acknowledgments}
JA was partially supported by the National Science Foundation under grant DMS-2103266.
SC was supported by the Simons Foundation Award 1141490.
CG was partially supported by the National Science Foundation through the grants DMS-2103383 and DMS-2516786.

We thank Jonathan Luk for several useful discussions.
We are also grateful to Leo Abbrescia and Jan Sbierski for their insights on uniqueness of MGHDs.

\section{Proof overview}
\label{sec:proof-over}

In this section, we describe the proofs of our main results, informally stated in Theorems~\ref{thm:hyperbolic} and \ref{thm:homexprough} and rigorously expressed as Theorems~~\ref{thm:SF} and \ref{thm:psi-hom-exp}.
Our first result establishes the stability of shock formation and estimates on the solution.
Our second uses these estimates to build a homogeneous expansion for $\psi$.

Given a simple wave $\Theta$ that forms a shock in finite time (see Section~\ref{sub:simple-waves}), we wish to perturb the data and show that a shock still forms.
We do so by carefully selecting advantageous independent and dependent variables.
In brief, $\psi$ proves to be smooth in a solution-dependent set of eikonal coordinates, and a nonlinear transformation of the unknowns allows us to single out the most singular part of the solution.

\subsection{Independent variables}

Because the simple wave is confined to a single eigenspace of $A$, it essentially solves a scalar conservation law.
It thus remains constant along a family of straight lines in the $(t, x)$ plane, and shock formation corresponds to the divergence of the line density (see Section~\ref{sec:gquant}).

When we perturb $\Theta$, we hope to have a similar picture, in which $\psi$ is closely bound to a scalar function whose level sets are nearly straight.
Shock formation should then be tied to the spatial density of these level sets.
This idea dates back to Keller and Ting~\cite{KelTin66} and John~\cite{John74}.
It was refined by Christodoulou~\cite{Christodoulou01}, and we follow his approach here.

Because $\Theta$ is a simple wave associated to $\lambda_{(I_0)}$, this eigenvalue is the most important in the problem.
We introduce a scalar ``eikonal'' function $u$ that is constant along the characteristic curves of $\lambda_{(I_0)}(\psi)$.
These curves are thus the level sets of $u$, and we show that they are nearly straight, as suggested above.
The quantity $\mu = (\partial_x u)^{-1}$ measures the inverse foliation density of these curves.
Where it vanishes, the foliation density diverges, and a shock forms.

The key idea is to use $u$ as a new independent variable, so we think of solutions as functions of $(t, u)$ rather than $(t, x)$.
For example, the simple wave $\Theta$ is constant along the characteristics of $\lambda_{(I_0)}$ and initially smooth, so $\Theta$ is a smooth function of $u$ alone.
When we perturb, $\psi$ is no longer exactly constant in $u$, but we show that it remains uniformly smooth in the $(t, u)$ coordinate system.
This does not contradict shock formation, as the change of variables from $(t, u)$ to $(t, x)$ becomes singular where $\mu$ vanishes.

To avoid potential confusion, let $\tau = t$ and consider the coordinate vector fields $(\partial_\tau, \partial_u)$ associated to the independent variables $(\tau, u)$.
So $\partial_\tau$ represents the time derivative with $u$ held constant, while $\partial_t$ holds $x$ constant.
Then we can compute $\partial_\tau = \partial_t + \lambda_{(I_0)} \partial_x$ and $\partial_u = \mu \partial_x$.
We express our equations in terms of $\partial_\tau$ and $\partial_u$, and use this to prove smoothness estimates in $(t, u)$.
Crucially, we have ``renormalized'' the spatial derivative $\partial_x$ by the inverse foliation density $\mu$ to measure $\partial_u = \mu \partial_x$.
While $\partial_x \psi$ blows up at the shock, $\partial_u \psi$ does not because we have ``tamed'' it through the vanishing factor $\mu$.
In short, $\partial_x$ is not a good measure of regularity because it cuts through too many level sets of $u$ near the shock; $\partial_u$ cures this pathology.

Ultimately, we wish to show that shock forms, so $\mu \to 0$.
In the simple wave, this is straightforward: one can easily check that $\partial_\tau^2 \mu = 0$ and $\partial_\tau \mu$ is initially negative somewhere.
As a result, $\mu$ decreases linearly to zero along the characteristic line through the first shock.
Perturbing, we show that $\partial_\tau^2 \mu$ remains small, so $\mu$ still decreases to zero along some characteristic curve.
That is, $\psi$ forms a shock.

\subsection{Dependent variables}

The other important realization is that certain combinations of dependent variables behave better than others.
In $2 \times 2$ systems, the celebrated Riemann invariants (see \cite{CouFri48}) make this quite explicit.
For larger systems, we can instead differentiate \eqref{eq:main} and project $\partial_x \psi$ onto the eigenspaces of $A$.
John took this approach in his analysis of the small data problem~\cite{John74}.

If $(\partial_x \psi)^I$ represents the $r_I$-component of $\partial_x\psi$ in the eigenbasis of $A(\psi)$, we can check that
\begin{equation*}
  (\partial_t + \lambda_{(I)} \partial_x) (\partial_x \psi)^I = \m{N}^I(\partial_x \psi),
\end{equation*}
where $\m{N}^I$ represents some nonlinearity.
Genuine nonlinearity \ref{hyp:gen-non} ensures that $\m{N}^{I_0}$ has a Riccati term driving $(\partial_x \psi)^{I_0}$ to blow up.
We find that the other equations are better-behaved, and $(\partial_x \psi)^{I'}$ actually remains bounded for $I' \neq I_0$.
We describe these calculations in Section~\ref{sec:sfbasicsetup}.

We note that the choice of dependent variables plays a significant role in higher dimensions.
In Christodoulou's approach \cite{Christodoulou01}, this is encoded in the fact that energies associated to different multipliers have distinct behavior.
In a similar vein, \cite{LukSpe01} and succeeding works separate vorticity and entropy from other components.
Recently, \cite{ShkVic_2024} made great use of a higher dimensional version of Riemann invariants.
These considerations are quite delicate in higher dimensions, as the geometry of acoustically null (characteristic) hypersurfaces must be taken into account.

\subsection{Maximal globally hyperbolic development}

An MGHD records a maximal region on which \eqref{eq:main} can be classically solved in a causally-closed fashion.
Its boundary therefore tracks some sort of breakdown in either the solution or the causal structure.
In our setting, if $p$ lies on the boundary of an MGHD, either $\mu(p) = 0$ (blowup at $p$), $\mu \to 0$ in the causal past of $p$ (causal breakdown at $p$), or both.
These scenarios correspond to preshocks, Cauchy horizon, and singular boundary, respectively.
For more on this terminology, see Section~\ref{sec:SFstatements}.

To detect these breakdowns, we introduce a quantity $\mu^*(p)$ essentially given by $\inf_{J^-(p)} \mu$, where $J^-(p)$ denotes the causal past of $p$ (defined in Appendix~\ref{sec:appendix1}).
Then the boundary of an MGHD coincides with the zero set of $\mu^*$.
By studying $\mu^*$ in depth, we characterize the boundary of an MGHD in the vicinity of shock formation.

\subsection{Homogeneous expansion}

After showing shock formation under quite general conditions, we assume a stronger form of nondegeneracy (Condition~\ref{cond:genericity}).
After a Galilean transformation, this ensures that $\psi$ resembles an inverse cubic profile $\cub$ solving
\begin{equation*}
  x = -a t \cub + b \cub^3
\end{equation*}
for some $a,b \neq 0$ of the same sign.
We then develop a complete expansion for $\psi$ near shock formation in terms of anisotropic homogeneous functions based on $\cub$.

Our proof relies on the smoothness of $\mu$ and $\psi$ in the eikonal $(\tau, u)$ coordinates described above.
By characterizing the first few terms in the $(\tau, u)$-Taylor expansion of $\mu$, we determine the leading behavior of the (singular) change of variables from $(\tau, u)$ to $(t, x)$.
This allows us to show that $u$ itself resembles $\cub$ near shock formation, and we can recursively develop a complete homogeneous expansion for $u$ as in \cite{CG23}.
Then the expansion for $\psi$ follows from its smoothness in $(t, u)$.

\section{Inviscid structure and geometry}
\label{sec:sfbasicsetup}

In this section, we establish the basic hyperbolic structure of our equation in the inviscid case $\nu = 0$.
The following material is relatively standard (see, for example, \cite{Dafermos} or \cite{BreNotes13}), but we provide derivations for completeness.

We consider the system of equations
\begin{equation}
  \label{eq:psi}
  \partial_t \psi + A(\psi) \partial_x \psi = 0,
\end{equation}
where $\psi$ takes values in an $N$-dimensional vector space $\m{V}$ and $A(\psi)$ is a linear transformation.
Fixing a basis $(e_S)_{S \in \N}$ of $\m{V}$ (and thus an isomorphism to $\R^N$), let $(\psi^S)_{S \in [N]}$ denote the $N$ scalar components of $\psi$.
Adopting Einstein's summation notation, we can then write $\psi = \psi^S e_S$ and
\begin{equation*}
  \partial_t \psi^S + A_T^S(\psi) \partial_x \psi^T = 0.
\end{equation*}
This is the general form of a system of conservation laws, in which $\psi$ is a collection of physical state variables.

\subsection{Spectral notation}
By \ref{hyp:strict-hyp}, $A$ is strictly hyperbolic, having $N$ distinct real eigenvalues.
Numerous physical equations satisfy this condition, including the compressible Euler equations.
Let $\lambda_{(1)} < \ldots < \lambda_{(N)}$ denote these eigenvalues arranged in increasing order.
We write $(I)$ rather than $I$ to emphasize that $\lambda$ is a collection of numbers associated to vectors, not a vector itself.
It is not subject to summation notation.

Let $l^I$ and $r_I$ denote the corresponding families of left and right eigenvectors, so
\begin{equation*}
  l^I A = \lambda_{(I)}l^I, \quad A r_I = \lambda_{(I)} r_I, \And l^J r_I = \delta_I^J.
\end{equation*}
We interpret $l^I$ as a row vector and $r_I$ as a column vector.
We normalize $r_I$ (and thereby $l^I$) by assuming $r_I$ has unit length in the standard inner product on $\R^N$.

Let $R$ denote the matrix changing the standard to the eigenbasis, so $r_I = R_I^S e_S$.
If $(\eta^S)_S$ denotes the basis of row vectors dual to $(e_S)_S$, we likewise define a matrix $L$ by $l^I = L_S^I \eta^S$.
The matrices $R$ and $L$ are inverses, for
\begin{equation*}
  \delta_I^J = l^J r_I = L_S^J R_I^T \eta^S e_T = L_S^J R_I^S.
\end{equation*}
We emphasize that $A$, $\lambda,$ $r$, $l$, $R$, and $L$ all depend on the state $\psi$.
We often elide this dependence, but it is ever-present.

The matrix $A$ acts on $\partial_x \psi$ in \eqref{eq:psi}, so it is natural to decompose $\partial_x \psi$ in terms of the right eigenvectors of $A$: $\partial_x \psi = (\partial_x \psi)^I r_I.$
Using this in \eqref{eq:psi}, we find
\begin{equation*}
  \partial_t \psi + A [(\partial_x \psi)^I r_I] = \partial_t \psi + \lambda_{(I)} (\partial_x \psi)^I r_I = 0.
\end{equation*}
To fully express our equation in terms of the diagonalizing components $(\partial_x \psi)^I$, we apply $\partial_x$:
\begin{equation}
  \label{eq:psix1}
  \begin{aligned}
    0 &= \partial_t [(\partial_x \psi)^I r_I(\psi)] + \partial_x [\lambda_{(I)} (\psi) (\partial_x \psi)^I r_I(\psi)]\\
      &= [\partial_t (\partial_x \psi)^I] r_I + \lambda_{(I)} (\partial_x [\partial_x \psi)^I] r_I + (\partial_x \psi)^I \f {\rd r_I} {\rd \psi^S} \partial_t \psi^S\\
      &\hspace{4cm}+ \left [\f {\rd \lambda_{(I)}} {\rd \psi^S} (\partial_x \psi)^I r_I + \lambda_{(I)} (\partial_x \psi)^I \f {\rd r_I} {\rd \psi^S} \right] (\partial_x \psi)^S\\
      &= [\partial_t (\partial_x \psi)^I] r_I + \lambda_{(I)} [\partial_x (\partial_x \psi)^I] r_I - (\partial_x \psi)^I \f {\rd r_I} {\rd \psi^S} R_J^S \lambda_{(J)} (\partial_x \psi)^J\\
      &\hspace{4cm}+ \left [\f {\rd \lambda_{(I)}} {\rd \psi^S} (\partial_x \psi)^I r_I + \lambda_{(I)} (\partial_x \psi)^I \f {\rd r_I} {\rd \psi^S} \right] (\partial_x \psi)^S.
  \end{aligned}
\end{equation}
Here we have contracted \eqref{eq:psi} with $l^J$ and multiplied by $R_J^S$ to express $\partial_t\psi$ in terms of $\lambda$ and $R$.
These expressions involve derivatives of $r$ and $\lambda$ with respect to $\psi$.
For simplicity, define the 3-tensor $\m{R}$ and the collection of vector $\Lambda$ by
\begin{equation}
  \label{eq:der-eig-rig}
  \f {\rd r_I} {\rd \psi^S} = \mathcal{R}_{I S}^J r_J \And \f {\rd \lambda_{(I)} (\psi)} {\rd \psi^S} = \Lambda_{(I) S}.
\end{equation}
With this notation, we introduce
\begin{equation}
  \label{eq:psixrhs}
  \xi_{J K}^I \coloneqq (\lambda_{(K)} - \lambda_{(J)}) \mathcal{R}_{J S}^I R_K^S - \Lambda_{(I) S} R_J^S \delta_K^I.
\end{equation}
The terms $\mathcal{R}_{J S}^I R_K^S$ and $\Lambda_{(I) S} R_J^S \delta_K^I$ admit nice geometric interpretations.
The first is the Lie bracket in $\m{V}$ between the eigenvectors $r_J$ and $r_K$, and the second is the rate of change of the $I$th eigenvalue in $\m{V}$ in the direction of the $J$th eigenvector.
Collecting the eigencomponents of \eqref{eq:psix1}, we obtain the nonlinear transport system
\begin{equation}
  \label{eq:psix2}
    \partial_t (\partial_x \psi)^I + \lambda_{(I)} \partial_x (\partial_x \psi)^I = \xi_{J K}^I (\partial_x \psi)^J (\partial_x \psi)^K.
\end{equation}
This equation is central to our hyperbolic analysis, as it quantifies the manner in which different components of $\psi$ steepen as a new shock forms.

\subsection{Structural hypotheses}
Our analysis of shock formation relies on two key structural properties, one imposed and one automatic.
The first is the celebrated genuine nonlinearity condition of Lax (see \cite{BreNotes13}).
In our notation, the $I$th wave is genuinely nonlinear if $\xi_{I I}^I = -\Lambda_{(I) S} R_I^S \ne 0$.
As noted above, this represents the (negative) derivative of the eigenvalue $\lambda_{(I)}$ in the direction of its corresponding eigenvector $r_I$.
When $\xi_{II}^I \neq 0$, the $I$th wave feeds itself quadratically in \eqref{eq:psix2}.
This Riccati structure dominates the equation in certain regimes and is well known to lead to blow up in finite time.
We can state \ref{hyp:gen-non} as follows:
\begin{condition}
  \label{cond:gennon}
  There exists an index $I_0 \in [N]$, a constant $c > 0$, and a nonempty open set $U \subset \m{V}$ on which $\absb{\xi_{I_0 I_0}^{I_0}} \geq c$.
\end{condition}
\noindent
We use this genuine nonlinearity to construct solutions that shock in the component $I_0$.

The second key structural property holds for all conservation laws, and follows immediately from \eqref{eq:psixrhs}:
\begin{equation*}
  \xi_{J J}^{I} = 0 \ForAll I \neq J.
\end{equation*}
Hence in \eqref{eq:psix2}, the driver $[(\partial_x \psi)^{I_0}]^2$ only appears in the equation for $(\partial_x \psi)^{I_0}$ itself.
We use this to show that the other components are perturbative as the shock forms.
We use an additional $'$ symbol to index these perturbative waves.
Thus, $I' \in [N] \setminus \{I_0\}$.
When repeated, our Einstein convention will sum $I'$ over all indices not equal to $I_0$.

\subsection{The eikonal function and inverse foliation density}
\label{sec:gquant}

We next define a number of geometric quantities that are intimately tied to shock formation.
We begin with the characteristic vector fields
\begin{equation}
  \label{eq:char-field-L}
  L_{(I)}\coloneqq \partial_t + \lambda_{(I)} \partial_x,
\end{equation}
These of course depend implicitly on the solution $\psi$, through $\lambda_{(I)}$.
We are particularly interested in the shocking vector field $L_{(I_0)}$, to which we associate an eikonal function $u$ satisfying
\begin{equation}
  \label{eq:eikonal}
  L_{(I_0)} u = 0.
\end{equation}
Its level sets are integral curves of $L_{(I_0)}$.
We think of $u$ as tracking wave fronts from initial data, so we set $u(0,x) = x$.
The level sets of the eikonal function $u$ bunch with inverse foliation density
\begin{equation}
  \label{eq:mu}
  \mu \coloneqq \f 1 {\rd_x u}.
\end{equation}

In the following, we will be primarily interested in geometric quantities corresponding to the shocking component $I_0$.
For the sake of brevity, we thus drop the index and write $\lambda$, $r$, and $L$, for $\lambda_{(I_0)}$, $r_{I_0}$, and $L_{(I_0)}$ when confusion will not arise.
We likewise let $\Lambda_S$ denote $\Lambda_{(I_0) S}$.

We are interested in the evolution of the inverse foliation density $\mu$ along the shocking characteristic $L$.
We can first compute
\begin{equation*}
  \begin{aligned}
    L \mu^{-1} = L (\partial_x u) = [L,\partial_x] u &= -\f {\rd \lambda} {\rd \psi^S} (\partial_x \psi)^S \partial_x u = -\Lambda_S R_J^S (\partial_x \psi)^J \mu^{-1}\\
                                                     & = -\Lambda_S R_{I_0}^S (\partial_x \psi)^{I_0} \mu^{-1} - \Lambda_S R_{J'}^S (\partial_x \psi)^{J'} \mu^{-1}.
  \end{aligned}
\end{equation*}
Noting that $L \mu^{-1} = -\mu^{-2} L \mu,$ we obtain
\begin{equation}
  \label{eq:Lmu}
  L \mu = \Lambda_S R_{I_0}^S (\partial_x \psi)^{I_0} \mu + \Lambda_S R_{J'}^S (\partial_x \psi)^{J'} \mu.
\end{equation}
In fact, we can readily check the simpler formula $L \mu = \mu \partial_x \lambda$.
However, we prefer to separate out the shocking component $(\partial_x \psi)^{I_0}$ in \eqref{eq:Lmu}, as it drives the behavior of the system.

\subsection{Eikonal change of coordinates}
\label{sec:eikonal-change}
The eikonal function acts as a ``back to labels'' map: $u(t, x)$ is the initial location of a particle transported by the shocking vector field $L$ to a position $(t, x)$.
We thus make frequent use of $(t, u)$ as a new coordinate system.
To distinguish between partial derivatives in $t$ in which $x$ or $u$ are held constant, we write $t = \tau$ in the new system, so $(t, x) \leftrightarrow (\tau, u)$.
\begin{lemma}
  \label{lem:vec-field-change-of-coord}
  In the coordinate system $(\tau, u)$, $\partial_\tau = L$ and $\partial_u = \mu \partial_x$.
  Conversely, $\partial_t = L - \mu^{-1} \lambda (\mu \partial_x)$ and $\partial_x = \mu^{-1}(\mu \partial_x)$.
\end{lemma}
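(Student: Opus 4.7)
The plan is to verify the identities by straightforward application of the chain rule, using the eikonal equation $Lu = 0$ and the definition $\mu = 1/\partial_x u$ as the only non-trivial inputs. Since the change of variables $(t,x) \leftrightarrow (\tau, u)$ fixes the time coordinate ($\tau = t$), the Jacobian is triangular, and the computation reduces to identifying two slopes.

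First I would compute $\partial_\tau$. Holding $u$ constant means moving along a curve $\tau \mapsto (\tau, x(\tau))$ satisfying $u(\tau, x(\tau)) = \text{const}$. Differentiating and using $\partial_t u = -\lambda \, \partial_x u$ (which is just \eqref{eq:eikonal} rewritten), one finds $\dot x(\tau) = \lambda$, so by the chain rule $\partial_\tau = \partial_t + \lambda \, \partial_x = L$. Next, for $\partial_u$ one holds $t = \tau$ fixed and considers $u \mapsto (t, x(u))$ with $u(t, x(u)) = u$; differentiating gives $x'(u) = 1/\partial_x u = \mu$, whence $\partial_u = \mu \, \partial_x$. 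This establishes the first pair of identities.

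The converse identities then follow algebraically. From $L = \partial_t + \lambda \, \partial_x$ one reads off $\partial_t = L - \lambda \, \partial_x$, and substituting $\partial_x = \mu^{-1}(\mu \, \partial_x)$ yields $\partial_t = L - \mu^{-1} \lambda (\mu \, \partial_x)$. The identity $\partial_x = \mu^{-1}(\mu \, \partial_x)$ is tautological but recorded to emphasize that $\mu \, \partial_x = \partial_u$ is the regular object, with the singular factor $\mu^{-1}$ isolated.

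There is no genuine obstacle here: the argument is a one-line application of the chain rule. The only point requiring care is bookkeeping of which variable is held fixed by which partial derivative, which is exactly why the notation $\tau$ was introduced. I would present the computation in three or four short lines, with a brief remark that the triangular structure of the Jacobian (reflecting $\tau = t$) makes the inversion trivial.
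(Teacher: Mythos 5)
Your argument is correct and is essentially the paper's proof: both are a direct chain-rule computation using only $Lu=0$ and $\mu = (\partial_x u)^{-1}$, the paper expanding $\partial_t$ and $\partial_x$ in the $(\tau,u)$ basis and rearranging, while you differentiate along the coordinate curves directly — the same bookkeeping either way.
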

\begin{proof}
  By the chain rule, $\partial_t = \partial_\tau + \partial_t u \partial_u$ and $\partial_x = \partial_x u \partial_u = \mu^{-1} \partial_u.$
  Hence $\partial_u = \mu \partial_x$ and \eqref{eq:eikonal} yields
  \begin{equation*}
    \partial_\tau = \partial_t - \mu (\partial_t u) \partial_x = \partial_t + \lambda \partial_x = L.
  \end{equation*}
  Rearranging, $\partial_t = L - \mu^{-1} \lambda (\mu \partial_x)$.
\end{proof}
As a simple application, we can differentiate $x$ in the $(\tau, u)$ coordinate system, which informs the behavior of the inverse map expressing $x$ in terms of $(\tau, u)$.
Using Lemma~\ref{lem:vec-field-change-of-coord}, we find
\begin{equation}
  \label{eq:der-x}
  \partial_\tau x = Lx = \lambda \And \partial_u x = \mu.
\end{equation}
We frequently measure regularity relative to the vector fields $L$ and $\mu \partial_x$, which we call \emph{renormalized derivatives}.
We therefore compute commutators between these fields and the nonshocking fields $L_{(I')}$ from \eqref{eq:char-field-L}.
\begin{lemma}
  \label{lem:comm-vec-fields}
  We have $[L,\mu \partial_x] = 0$,
  \begin{equation*}
    [L,\mu L_{(I')}] = (L \mu) L + (\Lambda_{(I') S} - \Lambda_S) R_J^S (L \psi)^J \mu \partial_x,
  \end{equation*}
  and
  \begin{equation*}
    [\mu \partial_x, \mu L_{(I')}] = (\mu \partial_x \mu) L + (\Lambda_{(I') S} - \Lambda_S) R_J^S (\mu \partial_x \psi)^J \mu \partial_x.
  \end{equation*}
\end{lemma}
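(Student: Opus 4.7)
The plan is to work in the eikonal coordinate system $(\tau, u)$, where by Lemma~\ref{lem:vec-field-change-of-coord} the two ``renormalized'' fields become the commuting coordinate frame $L = \partial_\tau$ and $\mu \partial_x = \partial_u$. The first identity $[L, \mu\partial_x] = 0$ is then immediate: coordinate vector fields commute.

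For the other two brackets, I would first rewrite $\mu L_{(I')}$ in $(\tau, u)$ coordinates. Using $\partial_t = \partial_\tau - \mu^{-1}\lambda\,\partial_u$ and $\partial_x = \mu^{-1}\partial_u$ from Lemma~\ref{lem:vec-field-change-of-coord}, a direct substitution gives
\begin{equation*}
  \mu L_{(I')} = \mu\partial_t + \mu\lambda_{(I')}\partial_x = \mu\,\partial_\tau + (\lambda_{(I')} - \lambda)\,\partial_u.
\end{equation*}
Since $[\partial_\tau,\partial_u] = 0$, the brackets reduce to the action of $\partial_\tau$ (resp.\ $\partial_u$) on the coefficients $\mu$ and $\lambda_{(I')} - \lambda$. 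This gives, for instance,
\begin{equation*}
  [L,\mu L_{(I')}] = (\partial_\tau \mu)\,\partial_\tau + \partial_\tau(\lambda_{(I')} - \lambda)\,\partial_u = (L\mu) L + L(\lambda_{(I')} - \lambda)\,\mu\partial_x,
\end{equation*}
and analogously for $[\mu\partial_x, \mu L_{(I')}]$ with $\partial_u = \mu\partial_x$ in place of $\partial_\tau$.

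To finish, I would translate the derivatives of $\lambda_{(I')} - \lambda$ back to the form stated in the lemma. For any vector field $X$, the chain rule and \eqref{eq:der-eig-rig} yield $X\lambda_{(I)} = \Lambda_{(I)S}\,X\psi^S$, and expanding $X\psi$ in the eigenbasis gives $X\psi^S = R_J^S (X\psi)^J$. Applying this with $X = L$ produces the coefficient $(\Lambda_{(I')S} - \Lambda_S) R_J^S (L\psi)^J$, and with $X = \mu\partial_x$ produces $(\Lambda_{(I')S} - \Lambda_S) R_J^S (\mu\partial_x \psi)^J$, matching the stated formulae.

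The calculation is essentially bookkeeping once the change of frame is made; the only mild pitfall is keeping the two uses of the letter $L$ (the vector field and the matrix $L_S^I$) and the eigencomponent notation $(X\psi)^J$ straight. The real conceptual content—that the renormalized pair $(L, \mu\partial_x)$ is a commuting coordinate frame—has already been provided by Lemma~\ref{lem:vec-field-change-of-coord}, which is what makes this lemma so short.
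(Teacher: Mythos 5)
Your proof is correct and follows essentially the same route as the paper: both use Lemma~\ref{lem:vec-field-change-of-coord} to get $[L,\mu\partial_x]=0$, write $\mu L_{(I')} = \mu L + (\lambda_{(I')}-\lambda)\mu\partial_x$, and reduce the brackets to derivatives of the coefficients, converting $X(\lambda_{(I')}-\lambda)$ via the chain rule and \eqref{eq:der-eig-rig} into $(\Lambda_{(I')S}-\Lambda_S)R_J^S(X\psi)^J$. The paper's proof is merely a more compressed version of the same computation.
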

\begin{proof}
  Coordinate vector fields commute, so $[L, \mu \partial_x] = 0$ follows from Lemma~\ref{lem:vec-field-change-of-coord}.
  For the remaining commutators, we use $\mu L_{(I)} = \mu L + (\lambda_{(I)} - \lambda) \mu \partial_x.$
  It follows that
  \begin{equation*}
    [L,\mu L_{(I)}] = (L \mu) L + L (\lambda_{(I)} - \lambda) \mu \partial_x = (L \mu) L + (\Lambda_{(I) S} - \Lambda_S) R_J^S (L \psi)^J \mu \partial_x.
  \end{equation*}
  Similarly, $[\mu \partial_x,\mu L_{(I)}] = (\mu \partial_x \mu) L + \mu \partial_x (\lambda_{(I)} - \lambda) \mu \partial_x$ yields the third identity.
\end{proof}
We also note for future reference that contracting \eqref{eq:psi} on the left with $l^I$ yields
\begin{equation}
  \label{eq:charvan}
  0 = (\partial_t \psi)^I + \lambda_{(I)} (\partial_x \psi)^I = (L_{(I)} \psi)^I.
\end{equation}

\section{Shock formation statement}
\label{sec:STheorems}

We now focus on the process of shock formation, which is driven by the genuinely nonlinear component $I_0$.
We recall the notation of \eqref{eq:char-field-L} and \eqref{eq:mu}.
\begin{definition}
  We work with the following collection of fundamental unknowns:
  \begin{equation*}
    \Ph \coloneqq \{\psi, (L \psi)^{I'}, (\partial_x \psi)^{I'}, \mu (\partial_x \psi)^{I_0},\mu\}.
  \end{equation*}
  We also write $\Phs \coloneqq \mu (\partial_x \psi)^{I_0}$ and $\Phns^{I'} \coloneqq (\partial_x \psi)^{I'}$.
\end{definition}
The first five quantities in $\Ph$ are derivatives of state variables of the gas, and the last is the geometric quantity $\mu$.
We view these as fundamental because they remain smooth when regularity is measured with respect to $\partial_\tau = L$ and $\partial_u = \mu \partial_x$.
The renormalized derivative $\Phs = \mu (\partial_x \psi)^{I_0}$ plays a particularly important role in our analysis, as $(\partial_x \psi)^{I_0}$ experiences the Riccati blowup at the heart of shock formation.
In \eqref{eq:sx} below, we show that scaling by $\mu$ leads to a less singular equation for $\Phs$.

\subsection{Simple waves}
\label{sub:simple-waves}

We study solutions $\psi$ of \eqref{eq:psi} near so-called simple waves.
These are well-known special solutions of \eqref{eq:psi} that are essentially one-dimensional.
We largely follow the treatment in \cite{Joh90} and elaborate on certain points important for our subsequent analysis.
We direct the reader to \cite{Joh90} and \cite{Dafermos} for further exposition and detail.

A simple wave $\Theta$ associated to $\lambda = \lambda_{I_0}$ is a solution of \eqref{eq:psi} that is constant along the $I_0$ characteristic curves, so $L \Theta = 0$.
Using \eqref{eq:psi}, it follows that
\begin{equation*}
  0 = L \Theta + (A - \lambda I) \partial_x \Theta = (A - \lambda I) \partial_x \Theta.
\end{equation*}
Hence $\partial_x \Theta$ is a scalar multiple of the right eigenvector $r_{I_0}$ corresponding to $\lambda$.
We use this observation to construct such waves.

Motivated by the link between a derivative of $\Theta$ and $r_{I_0}(\Theta)$, consider a solution of the ODE $\Xi' = r_{I_0}(\Xi)$ in $\m{V}$.
Given any solution $\theta$ of the scalar conservation law
\begin{equation}
  \label{eq:simple-eikonal}
  \partial_t \theta + (\lambda \circ \Xi)(\theta) \partial_x \theta = 0,
\end{equation}
the vector-valued composition $\Theta \coloneqq \Xi \circ \theta$ satisfies
\begin{equation*}
  \partial_t \Theta + \lambda(\Theta) \partial_x \Theta = L \Theta = 0
\end{equation*}
and
\begin{equation*}
  \partial_t \Theta + A(\Theta) \partial_x \Theta = \partial_t \Theta + \lambda(\Theta) \partial_x \Theta = 0.
\end{equation*}
That is, $\Theta$ is a simple wave.
We can thus construct a simple wave corresponding to every solution of \eqref{eq:simple-eikonal}.
This is the same eikonal equation as \eqref{eq:eikonal} for $u$, but we typically use different initial data, so we use the new letter $\theta$.

In the following, we recall $\Lambda_S$ from \eqref{eq:der-eig-rig}, the set $U \subset \m{V}$ of genuine nonlinearity from Condition~\ref{cond:gennon}, and the coordinate system $(\tau, u)$ from Section~\ref{sec:eikonal-change}.
\begin{proposition}
  \label{prop:sw}
  For some $a > 0$, let $\Xi \colon [-a, a] \to U$ solve $\Xi' = r_{I_0}(\Xi)$.
  Let $\theta$ solve \eqref{eq:eikonal} with initial data in $\m{C}_\cc^\infty(\R; [-a, a]) \setminus \{0\}$.
  Then the following hold:
\begin{enumerate}[label = \textnormal{(\roman*)}, itemsep = 2pt]
\item $\Theta = \Xi \circ \theta$ is a simple wave solution of \eqref{eq:psi} and $\partial_\tau \Theta = 0.$
  \label{item:simple}
  
\item $\partial_\tau^2 \mu = L^2 \mu = 0$ and $\sup_\tau \min_u L \mu < 0$.
  \label{item:acceleration}
  
\item $\Theta$ forms its first shock at time
  \begin{equation*}
    t_* \coloneqq -[\min_u \partial_\tau \mu(0, u)]^{-1} = -[\min_x (\Lambda_S R_{I_0}^S (\rd_x \Theta)^{I_0}) (0,x)]^{-1} \in \R_+.
    \vspace*{-8pt}
  \end{equation*}
  \label{item:first}
  
\item Of the fundamental unknowns in $\Ph$, only $\psi = \Theta$, $\Phs$, and $\mu$ are nonzero.
  These are smooth in the $(\tau,u)$ coordinate system.
  \label{item:smooth}
\end{enumerate}
\end{proposition}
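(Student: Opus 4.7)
The plan is to work throughout in the eikonal coordinates $(\tau, u)$, in which a simple wave becomes a function of $u$ alone. Part~\ref{item:simple} I would dispatch by chasing the construction preceding the proposition: $\Xi' = r_{I_0}(\Xi)$ and the chain rule yield $\partial_x\Theta = r_{I_0}(\Theta)\,\partial_x\theta$, so $A(\Theta)\partial_x\Theta = \lambda(\Theta)\partial_x\Theta$. Combined with \eqref{eq:simple-eikonal} this gives $\partial_t\Theta + A(\Theta)\partial_x\Theta = 0$ as well as $L\Theta = 0$, i.e.\ $\partial_\tau\Theta = 0$ via Lemma~\ref{lem:vec-field-change-of-coord}. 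Hence $\Theta$, $\lambda(\Theta)$, and $\theta$ itself depend only on $u$ in the new coordinates.

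For part~\ref{item:acceleration}, I would bypass direct differentiation of \eqref{eq:Lmu} in favor of the commutativity of coordinate vector fields. From \eqref{eq:der-x}, $\partial_\tau x = \lambda$ and $\partial_u x = \mu$, so
\begin{equation*}
  \partial_\tau \mu \;=\; \partial_\tau \partial_u x \;=\; \partial_u \partial_\tau x \;=\; \partial_u \lambda(\Theta).
\end{equation*}
Since $\lambda(\Theta)$ depends only on $u$ by part~\ref{item:simple}, a further $\tau$-derivative immediately yields $\partial_\tau^2 \mu = L^2 \mu = 0$. Expanding $\partial_u\lambda(\Theta)$ via the chain rule, using $\partial_u = \mu\partial_x$ and $\partial_x\Theta \parallel r_{I_0}$, and invoking $\mu(0, \anon) \equiv 1$ (from $u(0, x) = x$) gives
\begin{equation*}
  L\mu(\tau, u) \;=\; L\mu(0, u) \;=\; \bigl(\Lambda_S R_{I_0}^S\bigr)(\Theta(0, u))\,(\partial_x\Theta)^{I_0}(0, u).
\end{equation*}
Condition~\ref{cond:gennon} together with \ref{hyp:gen-non} force $\Lambda_S R_{I_0}^S > 0$ on $U$, while $\theta_0 \coloneqq \theta(0, \anon)$ being compactly supported and nontrivial forces $(\partial_x\Theta)^{I_0}(0, \anon) = \theta_0'$ to be strictly negative somewhere. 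Therefore $\min_u L\mu$ is strictly negative and independent of $\tau$.

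Part~\ref{item:first} follows by integrating $L^2\mu = 0$ once against $\mu(0, \anon) \equiv 1$: $\mu(\tau, u) = 1 + \tau\,L\mu(0, u)$, which first vanishes at the claimed $t_*$. The alternate formula comes from substituting the expression for $L\mu(0, \anon)$ and reindexing $u \leftrightarrow x$ at time zero; positivity of $t_*$ is exactly the sign analysis above. Finally, for part~\ref{item:smooth}: $L\Theta = 0$ kills $(L\psi)^{I'}$ and $\partial_x\Theta \parallel r_{I_0}$ kills $\Phns^{I'}$, leaving only $\psi = \Theta$, $\mu$, and $\Phs$ potentially nonzero. Smoothness in $(\tau, u)$ is direct from explicit formulas: $\Theta(\tau, u) = \Xi(\theta_0(u))$, $\mu$ is polynomial in $\tau$ with $u$-smooth coefficients, and $\Phs = \mu(\partial_x\Theta)^{I_0}$ is smooth because $\partial_u = \mu\partial_x$ turns the offending $\partial_x\Theta$ into a $u$-smooth derivative of $\Theta$.

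No single step presents a real obstacle; the eikonal coordinate setup and the simple-wave ansatz do the heavy lifting. The most delicate piece is the sign bookkeeping that makes $t_* > 0$ and reconciles the two stated expressions, but this reduces to a short chain-rule computation once one observes the commutativity identity $\partial_\tau\mu = \partial_u\lambda$.
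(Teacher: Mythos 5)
Your parts (i), (ii), and (iv) are sound and close in spirit to the paper's argument; your route to $\partial_\tau^2\mu = 0$ via the symmetry of mixed partials of $x$ (i.e.\ $\partial_\tau\mu = \partial_\tau\partial_u x = \partial_u\partial_\tau x = \partial_u[\lambda(\Theta)]$, which is $\tau$-independent) is a clean variant of the paper's computation, which instead applies $L$ to \eqref{eq:Lmu} using the commutation facts $[L,\mu\partial_x]=0$ and $L\Theta = L\theta = 0$; the paper itself notes the equivalent identity $L\mu = \mu\,\partial_x\lambda$ right after \eqref{eq:Lmu}, so this is a difference of bookkeeping rather than of substance.

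There is, however, a genuine gap in part (iii). What you prove is that the explicit formula $\mu(\tau,u) = 1 + \tau\,(\Lambda_S R_{I_0}^S\,\partial_x\theta)(0,u)$ first vanishes at time $t_*$. But the statement asserts that $\Theta$ \emph{forms its first shock} at $t_*$, and vanishing of the inverse foliation density is a priori only a degeneration of the $(\tau,u)\leftrightarrow(t,x)$ change of variables; it does not by itself force a derivative of $\Theta$ to blow up. To close this you must check two things the proposal never states: first, that at a minimizer $u_*$ of $(\Lambda_S R_{I_0}^S\,\partial_x\theta)(0,\cdot)$ the quantity $\mu\,\partial_x\theta$ is nonzero (true because the minimum is strictly negative, so $\partial_x\theta(0,u_*)\neq 0$) and remains nonzero in $\tau$ (true because $L(\mu\,\partial_x\theta) = \partial_\tau\partial_u\theta = 0$), whence $\abs{(\partial_x\Theta)^{I_0}} = \abs{\mu\,\partial_x\theta}/\mu \asymp \mu^{-1} \to \infty$ as $\tau \nearrow t_*$; and second, that for $\tau < t_*$ one has $\inf_u \mu(\tau,\cdot) > 0$, so $\partial_x\Theta = \mu^{-1}\partial_u\Theta$ stays bounded and $\Theta$ is genuinely smooth before $t_*$, making $t_*$ the \emph{first} shock time. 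Both points are easy given your formulas (and the nonvanishing of $\mu\,\partial_x\theta$ at $u_*$ is implicit in your part (iv) observations), but as written the argument conflates ``the eikonal coordinates break down'' with ``a shock forms,'' which is exactly the distinction this proposition is meant to pin down.
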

\begin{proof}
  Lemma~\ref{lem:vec-field-change-of-coord} states that $\partial_\tau = L$, so \ref{item:simple} follows from the preceding discussion.
  
  For \ref{item:acceleration}, we note that $(\rd_x \Theta)^{I'} = 0$, so \eqref{eq:Lmu} and $\Xi' = r_{I_0}(\Xi)$ yield
  \begin{equation}
    \label{eq:Lmu-simple}
    L \mu = \Lambda_S R_{I_0}^S \mu (\rd_x \Theta)^{I_0} = \Lambda_S R_{I_0}^S \mu \partial_x \theta.
  \end{equation}
  We apply $L$ to both sides.
  Because $\Lambda$ and $R$ are functions of $\Theta$ and $L \Theta = 0$, both commute with $L$.
  The same holds for $\mu \partial_x$ (Lemma~\ref{lem:comm-vec-fields}) and $L \theta = 0$, so $L^2 \mu = 0$.
  Hence $L\mu$ is constant in $\tau$, so \eqref{eq:Lmu-simple} yields
  \begin{equation*}
    \sup_\tau \min_u L \mu = \min_u L \mu|_{\tau = 0} = \min_u \Lambda_S R_{I_0}^S \mu \partial_x \theta|_{\tau = 0}.
  \end{equation*}
  Now $\Lambda_S R_{I_0}^S \neq 0$ by genuine nonlinearity, $\mu|_{\tau = 0} = 1$, and $\partial_x \theta|_{\tau = 0}$ attains both positive and negative values because $\theta|_{\tau = 0}$ is compactly supported and nonzero.
  It follows that $\min_u L \mu|_{\tau = 0} < 0$.
  
  We have just shown that at the initial time $\tau = 0$, $\mu = 1$ and $\partial_\tau \mu = \Lambda_S R_{I_0}^S \partial_x \theta$.
  Integrating $\partial_\tau^2 \mu = 0$ twice in $\tau$, we conclude that
  \begin{equation}
    \label{eq:mu-lin}
    \mu(\tau, u) = 1 + (\Lambda_S R_{I_0}^S \partial_x \theta)(0, u) \tau.
  \end{equation}
  Choose $u_* \in \R$ minimizing $\Lambda_S R_{I_0}^S \partial_x \theta|_{\tau = 0}$ and define $t_*$ as in \ref{item:first}.
  Then $\mu(t_*, u_*) = 0$ but $\inf_u \mu(\tau, u) > 0$ for all $\tau < t_*$.
  So the change of coordinates $(t, x) \leftrightarrow (\tau, u)$ breaks down precisely at time $t_*$.
  Moreover, $L \mu \partial_x \theta = 0$ and the definition of $u_*$ ensures that $\mu \partial_x \theta(0, u_*) \neq 0$.
  Hence $\abss{(\partial_x \Theta)^{I_0}} \asymp \mu^{-1} \to \infty$ as $\tau \nearrow t_*$.
  Therefore \ref{item:first} holds: $\Theta$ is smooth before time $t_*$ but develops a shock (infinite slope) at $t_*$.

  Finally, for \ref{item:smooth}, we have $L\Theta = 0$ and $(\partial_x \Theta)^{I'} = 0$.
  Differentiating the nontrivial quantities in $\tau$, we find $\partial_\tau \theta = 0$, $\partial_\tau \Phs = \partial_\tau (\mu \partial_x \theta) = \partial_\tau \partial_u \theta = 0,$ and $\partial_\tau \mu = \Lambda_S R_{I_0}^S \Phs$ while $\partial_\tau^2 \mu = 0$.
  The quantities $\theta,\Phs,$ and $\mu$ are initially smooth, and these identities show that this smoothness is propagated.
\end{proof}
Due to the freedom in $\theta$, this proposition yields a large class of shocking simple waves.
For simplicity, in the remainder of the paper we fix a $\theta$ satisfying $\min (\Lambda_S R_{I_0}^S \partial_x \theta)|_{\tau = 0} = -1$, so that the first shock develops at time $t_* = 1$.
We observe that the resulting simple wave $\Theta = \Xi \circ \theta$ takes values in a compact subset of $\m{V}$.
We allow all free constants to implicitly depend on $A$ and its derivatives in this compact subset.

We wish to show that shock formation is stable near simple waves.
However, we are not able to do so in complete generality when the shocking characteristic $I_0$ has intermediate speed.
We therefore impose an extremely weak nondegeneracy condition on $\Theta$.
\begin{condition}
  \label{cond:L1Th}
  A simple wave $\Theta$ is \emph{mildly nondegenerate} with parameters $\eta, \delta_1 > 0$ and $\delta_2 \in (0, 1/10]$ if one of the following holds:
  \begin{enumerate}[label = \textnormal{(\roman*)}, itemsep = 2pt]
  \item $\Theta$ corresponds to an extremal characteristic: $I_0 = 1$ or $N$.
    
  \item There exist $u_1 < u_2$ with $u_2 - u_1 \leq 2\eta$ such that for all $0 \leq \tau \leq t_* + \delta_1$ and $u \in [u_1,u_2]^\cc$, $\mu(\tau, u) \geq \delta_2$.
    Additionally, $L \mu \leq -\frac{3}{4} \inf L \mu$ in $[u_1 - \delta_1, u_2 + \delta_1]$.
  \end{enumerate}
\end{condition}
\noindent
This condition prevents uncontrolled growth in the linearized equations around the background wave---see Section~\ref{sec:badlin} for details.
We emphasize that there is no need for such a condition when $I_0$ is extremal.
When $I_0$ has intermediate speed, we will require simple waves with $\eta \ll 1$.
To construct such waves, take $\eta > 0$ arbitrarily small and fix $u_1 < u_2 \leq u_1 + \eta$.
Then choose data for $\theta$ such that
\begin{equation*}
  L \mu|_{\tau = 0} = \Lambda_S R_{I_0}^S \partial_x \theta|_{\tau = 0} > -1 \quad \text{in } (u_1, u_2)^\cc
\end{equation*}
but $\min_{(u_1,u_2)} L \mu|_{\tau = 0} = -1$.
Because $\supp \theta \setminus (u_1, u_2)$ is compact, \eqref{eq:mu-lin} implies the existence of $\delta_1 > 0$ and $\delta_3 \in (0, 1/10]$ such that $\mu(\tau,u) \ge \delta_3$ for all $0 \le \tau \le 1 + \delta_1$ and $u \in (u_1, u_2)^\cc$.
This wave satisfies $t_* = 1$ and Condition~\ref{cond:L1Th}.

Examining this construction, we note that Condition~\ref{cond:L1Th} allows for an infinitely degenerate configuration on an interval, as long as the interval has small length.
An example is data for Burgers where the minimum of the spatial derivative of the initial data is attained on an entire (small) interval.
Thus, in particular, every finitely degenerate situation ought to satisfy a condition like this as we approach the time of singularity formation.

\subsection{Shock formation formulation}
\label{sec:SFstatements}

To frame our main theorem on shock formation, we use the notion of a maximal globally hyperbolic development (MGHD) for a hyperbolic equation.
Informally, an MGHD is a solution over a spacetime region that cannot be extended.
Due to finite speed of propagation,  we can define an MGHD that extends past time $t_*$ away from the first singularity.
For the reader's convenience, we describe MGHDs in more precise terms in Appendix~\ref{sec:appendix1}.
\begin{definition}
  We let $(\mathcal{M}, \psi)$ denote a maximal globally hyperbolic development, where $\mathcal{M} \subset \R^{1 + 1}$ is the domain of definition of the solution $\psi$.
  Given $t > 0$, we let $\mathcal{M}_t$ denote $([0,t] \times \R) \cap \mathcal{M}$ and $\calB_t$ the future boundary of $\calM_t$.
\end{definition}
\noindent
In \cite{ShkVic_2024}, $\mathcal{M}_t$ is termed a maximal globally hyperbolic development ``in a box.''
It allows one to study the boundary of the MGHD near the first time singularity while suppressing more global issues.

We are interested in this future boundary $\m{B}_t$ because it tracks the most severe effects of the singularity: those that prevent the solution from being extended.
Broadly speaking, boundary points of an MGHD satisfy at least one of two conditions: they are singular (a crucial derivative of the solution blows up), or they contain a singular point in their causal past (defined in Appendix~\ref{sec:appendix1}).
We decompose the future boundary $\m{B}_t$ of $\m{M}_t$ according to which conditions are satisfied.

Intuitively, the ``preshock set'' $\m{B}_t^{\text{pre}}$ consists of singularities with regular causal past.
Conversely, points in the ``Cauchy horizon'' $\m{B}_t^{\text{Cau}}$ are regular but have singular causal past.
The ``singular set'' $\m{B}_t^{\text{sing}}$ captures points that are both singular and contain singularities in their causal past.
Finally, the ``extensible set'' $\m{B}_t^{\text{ext}}$ avoids all singularities and has time coordinate $t$, so it lies on the temporal horizon of the box.
With this terminology $\m{B}_t$ is the disjoint union $\m{B}_t = \m{B}_t^{\text{pre}} \sqcup \m{B}_t^{\text{Cau}} \sqcup \m{B}_t^{\text{sing}} \sqcup \m{B}_t^{\text{ext}}$.

In greater detail, the preshock set $\m{B}_t^{\text{pre}}$ represents the birth of a new singularity.
Generically, it consists of isolated points, known as preshocks or creases.
Assuming $t$ is only slightly larger than the time coordinate of the earliest preshock, we are free to assume that there is a single preshock.
Much of our analysis revolves around the behavior of the solution near $\m{B}_t^{\text{pre}}$.
The union $\m{B}_t^{\text{Cau}} \sqcup \m{B}_t^{\text{sing}}$ generically consists of two curves emanating from the preshock.
If the shocking characteristic $I_0$ has intermediate speed, both curves lie in the Cauchy horizon $\m{B}_t^{\text{Cau}}$ and $\m{B}_t^{\text{sing}} = \emptyset$.
In the extremal setting, one curve forms the Cauchy horizon and the other constitutes the so-called singular manifold $\m{B}_t^{\text{sing}}$.
Finally, as its name suggests, the solution is perfectly well-behaved near the extensible set $\m{B}_t^{\text{ext}}$ and could be continued in a neighborhood.
This portion of the boundary $\m{B}_t$ is merely an artifact of our focus on times before $t$.
The above descriptions are meant to give an intuitive picture of the various components of $\m{B}_t$.
For precise definitions, see Section~\ref{sec:MGHD}.

We place one more  condition on the background solution.
\begin{condition}
  \label{cond:MGHDstab}
  We say $\Theta$ satisfies the \emph{graphical condition} if there exists a smooth vector field $\rd_t + F_\Theta (t,x) \rd_x$ such that $\lambda_{(1)} \circ \Theta < F_\Theta < \lambda_{(N)} \circ \Theta$ uniformly.
\end{condition}
This holds locally provided the singularities in $\Theta$ are not too degenerate.
For example, it suffices for $\Theta$ to take values in a small ball in state space.
In particular, Condition~\ref{cond:MGHDstab} holds for generic shocks.

For more general waves $\Theta$, the algorithm we use to construct the MGHD will not necessarily produce a unique output without such a condition.
Indeed in its absence, it seems possible to produce a double-sheeted cover of part of the $(t,x)$ plane.
For more details, we refer the reader to~\cite{EpReSb19}, which first introduced graphical conditions like Condition~\ref{cond:MGHDstab}.
Double-sheeted covers can also occur in scalar conservation laws such as the Burgers equation~\cite{AbbSpe23}.
For this reason, we require $N \geq 2$ in Theorem~\ref{thm:SF}, so that we have a genuine system.
These issues are intimately tied to the uniqueness of MGHDs, which we do not address in this work.

We can now state our main theorem on shock formation: the shocks of simple waves are stable and smooth measured in $(\tau, u)$ coordinates.
We recall that the following constants implicitly depend on the compact region in the state space $\m{V}$ where our waves take values.
\begin{theorem}
  \label{thm:SF}
  Fix $N \ge 2$.
  There exists $\eta > 0$ such that the following holds.
  Let $\Theta$ be a simple wave solution of \eqref{eq:psi} constructed in Proposition~\ref{prop:sw} that first forms a shock at time $1$ and satisfies Conditions~\ref{cond:L1Th} and \ref{cond:MGHDstab} with parameters $(\eta, \delta_1,\delta_2)$ for some $\delta_1,\delta_2 > 0$.
  Then there exist $\delta,\eps_0 > 0$ such that for any $\eps \in (0, \eps_0]$ and $v \in B_{\m{C}_x^1}(\eps)$, the solution $\psi$ of \eqref{eq:psi} with $\psi(0, \anon) = \Theta(0, \anon) + v$ forms a shock in finite time.
  Moreover:
  \begin{enumerate}[label = \textnormal{(\roman*)}, itemsep = 2pt]
  \item
    \label{item:shock-time}
    There exists an earliest time $t_* = 1 + \m{O}(\eps)$ for which $\smash{\liminf\limits_{t \nearrow t_*}} \norm{\nab \psi}_{L_x^\infty} (t) = \infty$ while $\psi$ remains bounded.

  \item
    \label{item:MGHD}
    We construct $(\calM_{t_* + \delta}, \psi)$ and characterize the components of the disjoint union $\m{B}_{t_* + \delta} = \m{B}_{t_* + \delta}^{\textnormal{pre}} \sqcup \m{B}_{t_* + \delta}^{\textnormal{Cau}} \sqcup \m{B}_{t_* + \delta}^{\textnormal{sing}} \sqcup \m{B}_{t_* + \delta}^{\textnormal{ext}}$.
    The set $\calB_{t_* + \delta}$ is uniformly Lipschitz.

  \item
    \label{item:fund-smooth}
    Suppose $\psi(0, \anon) \in \m{C}_x^{k+1}$ for some $k \geq 0.$
    Then
    \begin{equation*}
      \norm{\Ph}_{\m{C}_{\tau, u}^k(\m{M}_{t_* + \delta})} \leq M \norm{\psi(0, \anon)}_{\m{C}_x^{k+1}}
    \end{equation*}
    for some constant $M(\Theta,A, \eta, \delta_1, \delta_2, \delta, k) > 0$.
  \end{enumerate}
\end{theorem}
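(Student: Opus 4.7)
My plan is to run a bootstrap argument in the geometric $(\tau, u)$ coordinates of Section~\ref{sec:eikonal-change}, treating $\psi$ as a perturbation of the simple wave $\Theta$ described in Proposition~\ref{prop:sw}. The fundamental unknowns $\Ph$ are chosen precisely so that Proposition~\ref{prop:sw}\ref{item:smooth} identifies the simple-wave values: $\psi = \Theta$, $\Phs = \mu_\Theta \partial_x \theta$, $\mu = \mu_\Theta$, while $(L\psi)^{I'}$, $\Phns^{I'}$, and $\mu \Phns^{I'}$ vanish. The bootstrap will show that these differences remain $\m{O}(\eps)$ on a time interval that extends past the shock time $t_* = 1 + \m{O}(\eps)$, up to where $\mu$ first vanishes.

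The first technical step is to derive transport equations for the fundamental unknowns in the renormalized derivatives $L$ and $\mu\partial_x$. From \eqref{eq:charvan}, $(L\psi)^{I_0} = 0$, while for each $I' \neq I_0$, $(L\psi)^{I'}$ satisfies a transport along $L$ driven by bilinear terms in $\Phns^{J'}$ and $(L\psi)^{J'}$. The component $\Phns^{I'}$ satisfies \eqref{eq:psix2}, which in $(\tau, u)$ becomes transport along $\partial_\tau + (\lambda_{(I')} - \lambda)\mu^{-1}\partial_u$, with quadratic nonlinearity. The critical step is the equation for $\Phs$: writing $L\Phs = (L\mu)(\partial_x\psi)^{I_0} + \mu L (\partial_x\psi)^{I_0}$ and using \eqref{eq:Lmu} together with \eqref{eq:psix2}, the singular contribution $\mu\xi_{I_0 I_0}^{I_0}[(\partial_x\psi)^{I_0}]^2 = \xi_{I_0 I_0}^{I_0} \Phs^2/\mu$ cancels exactly against the $\Lambda_S R_{I_0}^S \Phs^2/\mu$ arising from $L\mu \cdot (\partial_x\psi)^{I_0}$, because $\xi_{I_0 I_0}^{I_0} = -\Lambda_S R_{I_0}^S$. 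This is the John--Christodoulou cancellation that makes $\Phs$ regular: the resulting equation $L\Phs = \m{O}(\Phs \Phns^{I'}) + \m{O}(\Phns^{I'} (L\psi)^{J'})$ has no Riccati term and closes perturbatively. Similarly, iterating $L$ on \eqref{eq:Lmu} gives $L^2\mu = \m{O}(\Phns^{I'})$, perturbing the simple-wave identity $L^2 \mu_\Theta = 0$.

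With these equations in hand, I set up the bootstrap with a smallness threshold like $\eps^{1/2}$ on the perturbative unknowns. The transports along $L$ for $\mu$, $\Phs$, and $(L\psi)^{I'}$ are integrated in $(\tau, u)$ as ODEs in $\tau$; the transports along $L_{(I')}$ for $\Phns^{I'}$ are integrated along characteristic curves in $(\tau, u)$. Integrating $L^2\mu = \m{O}(\eps)$ twice in $\tau$ refines \eqref{eq:mu-lin} to $\mu(\tau, u) = 1 + \tau \partial_\tau \mu(0, u) + \m{O}(\eps \tau^2)$, so $\mu$ first vanishes at some $(t_*, u_*)$ with $t_* = 1 + \m{O}(\eps)$; since $\Phs(t_*, u_*) \neq 0$, the ratio $(\partial_x\psi)^{I_0} = \Phs/\mu$ diverges while $\psi$ stays bounded, yielding \ref{item:shock-time}. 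For \ref{item:MGHD}, I extend the construction past $t_*$ using $\mu^*(p) = \inf_{J^-(p)} \mu$: the MGHD is the connected region up to $\{\mu^* = 0\}$, and analyzing how each of $\mu(p)$ and $\mu^*(p)$ can vanish splits $\m{B}_{t_* + \delta}$ into the four components. Condition~\ref{cond:MGHDstab} gives a global timelike vector field and thereby a Lipschitz graph structure on $\m{B}_{t_* + \delta}$. For \ref{item:fund-smooth}, I commute $L$ and $\mu\partial_x$ through the evolution equations using $[L, \mu\partial_x] = 0$ (Lemma~\ref{lem:comm-vec-fields}) and induct on $k$; extra $L_{(I')}$ derivatives are handled by the second and third identities of Lemma~\ref{lem:comm-vec-fields}, producing commutator error terms that are controlled by the lower-order bootstrap.

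The main obstacle I expect is controlling the perturbative waves $\Phns^{I'}$ and $(L\psi)^{I'}$ as their $L_{(I')}$-characteristics traverse the region where $\mu$ is small. In $(\tau, u)$ the speed of these characteristics is $(\lambda_{(I')} - \lambda)\mu^{-1}$, which blows up near $\m{B}_{t_* + \delta}^{\textnormal{sing}}$, threatening large time-integrated growth in the bootstrap. This is precisely where Condition~\ref{cond:L1Th} is used: in the intermediate case, it confines the $\mu$-degeneration to a $u$-interval of length $\le 2\eta$ and forces $\mu \ge \delta_2$ outside, so $L_{(I')}$-characteristics cross the bad region in $u$-time $\lesssim \eta$, limiting amplification by a factor depending on $\eta$. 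Choosing $\eta$ sufficiently small absorbs this into the smallness constant. In the extremal case the $L_{(I')}$-characteristics can only escape on one side and never return through the shocking region, so no such condition is needed. The higher-regularity step then requires tracking how these amplification factors compound under repeated commutators, which is delicate but reduces to the same mechanism.
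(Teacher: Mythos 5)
Your proposal is correct and follows essentially the same route as the paper: eikonal $(\tau,u)$ coordinates with the renormalized unknowns and the cancellation $\xi_{I_0 I_0}^{I_0} = -\Lambda_S R_{I_0}^S$, a bootstrap split between the region where Condition~\ref{cond:L1Th} keeps $\mu$ bounded below (Gr\"onwall in a suitable time function, as in Proposition~\ref{prop:r1est}) and a thin region near the shock where the short crossing length in $u$ makes the integrated order-one linear coupling small, the one-sided Gr\"onwall in $u$ for extremal $I_0$, the $\mu^*$-based description of $\m{B}_{t_*+\delta}$ with Condition~\ref{cond:MGHDstab} giving the Lipschitz graph, and commutation of $\partial_\tau,\partial_u$ for higher regularity. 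The only point to emphasize is that near the shock the $\Theta$-dependent linear terms are $\m{O}(1)$, so they must be absorbed via the smallness of their integral along characteristics ($\kappa<1$ in Proposition~\ref{prop:r2est}) rather than treated as bootstrap-small errors, which is what your ``absorb into the smallness constant'' step amounts to.
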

We state this result for $N \geq 2$ so we can push the analysis slightly after the time $t_*$ of shock formation.
If one is only interested in $t \leq t_*$, we can apply this theorem to scalar conservation laws ($N=1$) by adding a decoupled linear transport equation to form a system.
This auxiliary equation is irrelevant when $t \leq t_*$, but it will alter the nature of the MGHD after $t_*$.
With this trick, our subsequent viscous analysis applies to scalar laws as well.

As a consequence of \ref{item:fund-smooth}, the solution $\psi$ is as smooth as data allows in the $(\tau,u)$ coordinate system.
In particular, it has a Taylor expansion to suitable order in these coordinates.
In Section~\ref{sec:homogeneous}, we thereby show that $\psi$ admits an expansion in homogeneous functions founded on a universal cubic cusp.
This proves central to our subsequent study of the inviscid limit.

By \ref{item:MGHD}, the boundary $\m{B}_{t_* + \delta}$ of the MGHD in a box is uniformly Lipschitz.
In the generic case of a single preshock, the curvature of $\calB_{t_* + \delta}$ is in fact a Radon measure (see, for example, \cite{AbbSpe23}).
Indeed, $\m{B}_{t_* + \delta}$ then resembles two smooth curves emanating from a single point, with a point-mass of curvature at the vertex determined by the (positive) angle between the two curves.
However, at the level of generality considered in Theorem~\ref{thm:SF}, the regularity of $\calB_{t_* + \delta}$ in \ref{item:MGHD} is sharp in the sense that there exist very degenerate shocks for which the curvature of $\calB_{t_* + \delta}$ is not a Radon measure.
\begin{proposition}
  \label{prop:Bcurvature}
  There exists an MGHD $(\calM,\psi)$ such that $\psi$ has smooth initial data and first forms a shock at time $1$, but for all $\delta > 0$, the curvature of $\calB_{1 + \delta}$ is not a Radon measure.
\end{proposition}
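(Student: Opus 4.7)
The plan is to engineer a simple wave whose initial data produces infinitely many nondegenerate preshocks at time $t = 1$, with preshock locations in physical space accumulating at a single point $(1, x_\infty)$. Each such preshock will contribute a corner with uniformly positive angle defect to $\calB_{1 + \delta}$, yielding infinitely many point masses of uniform size in the curvature of $\calB_{1 + \delta}$ within any neighborhood of $(1, x_\infty)$, and thus failing local finiteness.

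Concretely, choose $\theta(0, \anon) \in \m{C}_\cc^\infty$ so that $f(u) \coloneqq \Lambda_S R_{I_0}^S \partial_u \theta(0, u)$ attains its minimum $-1$ precisely at the points $u_n \coloneqq u_\infty + 1/n$, with quadratic behavior $f(u) = -1 + c(u - u_n)^2 + O(|u - u_n|^3)$ near each $u_n$ for a fixed $c > 0$, and $f > -1$ elsewhere. Such $f$ is obtained by superposing scaled bumps of the form $-1 + c(u - u_n)^2$ on narrow non-overlapping intervals around each $u_n$ and interpolating smoothly to a strictly positive function outside. By Proposition~\ref{prop:sw}, $\Theta = \Xi \circ \theta$ first shocks at $t_* = 1$, and from $\mu(\tau, u) = 1 + f(u) \tau$ the zeros of $\mu$ at time $1$ are exactly $\{u_n\}$, yielding simultaneous preshocks at $(1, x_n)$ with $x_n = u_n + \lambda_{(I_0)}(\Xi(\theta(0, u_n)))$. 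A direct computation gives $x_n - x_\infty = \int_{u_\infty}^{u_n}(1 + f(s))\, ds > 0$ strictly decreasing to $0$, so the $x_n$'s are distinct and accumulate to $x_\infty$.

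For each fixed $n$, the quadratic minimum at $u_n$ is isolated, so in a small spacetime neighborhood $V_n$ of $(1, x_n)$ whose past cone avoids the other preshocks, the MGHD construction yields the standard picture: $\calB_{1 + \delta} \cap V_n$ consists of two smooth curves emanating from $(1, x_n)$ and meeting at a positive angle $\alpha_n$. For simple waves this follows by explicitly tracking the $\mu = 0$ locus and the associated Cauchy horizons through the characteristic map from $(\tau, u)$ to $(t, x)$. The angle $\alpha_n$ is a gap between wave speeds at the preshock---between $\lambda_{(I_0)}$ and an outer Cauchy-horizon speed in the extremal case, or between two Cauchy-horizon speeds in the intermediate case---and strict hyperbolicity \ref{hyp:strict-hyp} together with the continuity $\theta(0, u_n) \to \theta(0, u_\infty)$ forces $\alpha_n \geq \alpha_* > 0$ uniformly in $n$.

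Since the curvature measure of any Lipschitz curve carries a point mass equal to the exterior angle at each corner, for any $\delta > 0$ and any neighborhood $U$ of $(1, x_\infty)$, the curvature of $\calB_{1 + \delta}$ restricted to $U$ contains point masses $\alpha_n \geq \alpha_*$ for every $n$ with $(1, x_n) \in U$, hence has infinite total mass on $U$. This precludes the curvature from being a Radon measure, proving the proposition. The main obstacle is justifying the uniform local MGHD picture near each preshock despite the global degeneracy of the data; the cleanest route is to exploit the simple-wave structure, for which the $\mu = 0$ locus and Cauchy horizons can be read off directly from $f$ and the wave speeds, reducing the verification to a scalar characteristic computation plus the known causal layout of a strictly hyperbolic system.
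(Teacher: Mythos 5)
Your overall strategy coincides with the paper's: produce infinitely many simultaneous nondegenerate preshocks at time $1$ whose locations accumulate at a point, argue that each contributes a corner of uniformly positive angle to $\mathcal{B}_{1+\delta}$, and conclude that the curvature has accumulating point masses of uniform size and so is not locally finite. The paper, however, realizes this with a completely explicit example — the decoupled system of Burgers plus linear transport, $\partial_t v + v\partial_x v = 0$ and $\partial_t w - \partial_x w = 0$, with $\partial_x v(0,\cdot)$ attaining its minimum $-1$ nondegenerately at $x = 1/n$ — for which the local boundary structure near each preshock $(1,x_n)$ is immediate from the method of characteristics: the solution extends smoothly below the integral curves of $\partial_t + v\partial_x$ and $\partial_t - \partial_x$ through $(1,x_n)$, so these two curves form the boundary locally and give the point masses $2\delta_{(1,x_n)}$.

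Two genuine problems remain in your version. First, the data you specify cannot exist: if $f$ is continuous and $f(u_n) = -1$ with $u_n \to u_\infty$, then $f(u_\infty) = -1$ as well, so the minimum is not attained ``precisely at the points $u_n$'' with $f > -1$ elsewhere; and a fixed quadratic coefficient $c > 0$ at every $u_n$ is incompatible even with $f \in C^2$, since between consecutive equal minima there are local maxima where $f'' \le 0$, and these also accumulate at $u_\infty$, forcing $f''(u_\infty) \le 0$ while $f''(u_n) = 2c$ would force $f''(u_\infty) = 2c$. Both defects are fixable (take $c_n \to 0$ and accept an additional degenerate preshock at the accumulation point, which is harmless and is present in the paper's example too), but as written the construction is inconsistent. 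Second, and more substantively, the step you yourself flag as the main obstacle is the crux and is not supplied: the corner picture (two curves emanating from each $(1,x_n)$ at a uniformly positive angle) does not follow from Theorem~\ref{thm:SF}, which under Condition~\ref{cond:L1Th} only yields Lipschitz regularity of the boundary — and whose hypotheses (degeneracy confined to a short interval, in the intermediate case) need not hold for your data anyway; the corner-with-point-mass structure is asserted in the paper only for generic isolated preshocks, with a reference to the literature. You would therefore need a separate argument, e.g.\ an explicit characteristic computation exploiting the simple-wave structure, to identify the two boundary curves and bound the angle near each preshock; this is precisely what the paper's choice of Burgers plus decoupled transport makes trivial.
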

\noindent
This phenomenon can arise in very simple equations.
We exhibit it in the Burgers equation paired with decoupled linear transport.

\section{Renormalized equations}
\label{sec:reneq}

Our approach to shock formation relies on controlling the fundamental unknowns $\Ph$ in the $(\tau, u)$ coordinates.
Recall that the corresponding partial derivatives are $\partial_\tau = L$ and $\partial_u = \mu \partial_x$.
Crucially, the potentially singular spatial derivative is ameliorated by the inverse foliation density $\mu$, which vanishes at shock formation.
In this section, we show that this ``renormalization'' results in more favorable evolution equations.

\subsection{Equations for fundamental unknowns}
Due to our interest in $L$ and $\mu \partial_x$, we find equations for $(L \psi)^I$, $(\mu \rd_x \psi)^{I_0} = \Phi$, and $(\rd_x \psi)^{I'} = \phi^{I'}$.

Recalling \eqref{eq:charvan}, we observe that $(L \psi)^{I_0} = 0.$
Writing $L_{(I')} = L + (\lambda_{(I')} - \lambda) \rd_x$ and using \eqref{eq:charvan}, we also see that
\begin{equation}
  \label{eq:Lpsicontrol}
  (L \psi)^{I'} = (\lambda - \lambda_{(I')}) (\rd_x \psi)^{I'} = (\lambda - \lambda_{(I')}) \phi^{I'}.
\end{equation}
Thus to control $(L \psi)^{I'}$, it suffices to control $\phi^{I'}$.
Similarly, we can commute \eqref{eq:Lpsicontrol} with $L$ and $\mu \partial_x$ to control higher derivatives of $(L \psi)^I$ in terms of those of $\psi$ and $\Phns^{I'}$.
With this in mind, we can implicitly remove $(L \psi)^I$ from consideration in $\Ph$ and control it afterward using \eqref{eq:Lpsicontrol}.

Turning to $\Phi = \mu (\partial_x \psi)^{I_0}$, we take $I = I_0$ in \eqref{eq:psix2} and multiply by $\mu$:
\begin{equation*}
  \mu \partial_t (\partial_x \psi)^{I_0} + \mu \lambda \partial_x (\partial_x \psi)^{I_0} = \mu L (\partial_x \psi)^{I_0} = \mu \xi_{J K}^{I_0} (\partial_x \psi)^J (\partial_x \psi)^K.
\end{equation*}
Commuting $\mu$ inside the equation gives
\begin{equation}
  \label{eq:ominous}
  \begin{aligned}
    L \Phs &= (L \mu) (\partial_x \psi)^{I_0} + \mu \xi_{I_0 I_0}^{I_0} (\partial_x \psi)^{I_0} (\partial_x \psi)^{I_0} + \mu \xi_{J' K'}^{I_0} (\partial_x \psi)^{J'} (\partial_x \psi)^{K'}\\
           &\quad + \mu (\xi_{I_0 J'}^{I_0} + \xi_{J' I_0}^{I_0}) (\rd_x \psi)^{I_0} (\rd_x \psi)^{J'} 
    \\ &= \mu^{-1} \xi_{I_0 I_0}^{I_0} \Phs^2 + \mu \xi_{J' K'}^{I_0} \Phns^{J'} \Phns^{K'} + (\xi_{I_0 J'}^{I_0} + \xi_{J' I_0}^{I_0}) \Phs \Phns^{J'} + \mu^{-1} (L \mu) \Phs.
  \end{aligned}
\end{equation}
The first term on the right suggests Riccati blowup, but in fact $\mu^{-1} (L \mu) \Phs$ contributes an equal but opposite term, leading to a more favorable equation.
Recall from \eqref{eq:psixrhs} that $\xi_{I_0 I_0}^{I_0} = -\Lambda_S R_{I_0}^S$.
Moreover, we can write \eqref{eq:Lmu} as $L \mu = \Lambda_S R_{I_0}^S \Phs + \Lambda_S R_{J'}^S \mu \Phns^{J'}.$
Combining these observations in \eqref{eq:ominous}, we cancel the dangerous term and obtain
\begin{equation}
  \label{eq:sx}
  L \Phs = (\xi_{I_0 J'}^{I_0} + \xi_{J' I_0}^{I_0} + \Lambda_S R_{J'}^S) \Phs \Phns^{J'} + \mu \xi_{J' K'}^{I_0} \Phns^{J'} \Phns^{K'}.
\end{equation}
Finally, equations for $\phi^{I'} = (\partial_x \psi)^{I'}$ follow from simply multiplying \eqref{eq:psix2} by $\mu$:
\begin{equation} \label{eq:nsx}
  \begin{aligned}
    \mu L \Phns^{I'} + (\lambda_{(I')} - \lambda) \mu \partial_x \Phns^{I'} &= \mu \xi_{J K}^{I'} (\partial_x \psi)^J (\partial_x \psi)^K\\
                                                                            &= (\xi_{I_0 J'}^{I'} + \xi_{J' I_0}^{I'}) \Phs \Phns^{J'} + \mu \xi_{J' K'}^{I'} \Phns^{J'} \Phns^{K'}.
  \end{aligned}
\end{equation}
In conjunction with \eqref{eq:psi} and \eqref{eq:Lmu}, this completes our derivation of equations for the fundamental unknowns $\Ph$.

\subsection{Bad linear terms}
\label{sec:badlin}

We are now faced with the primary technical hurdle in our hyperbolic analysis: the system \eqref{eq:nsx} over $I'$ is not amenable to Gr\"onwall's inequality when $I_0$ has intermediate speed.
To see this, we write \eqref{eq:nsx} in the $(\tau,u)$ coordinate system schematically as
\begin{equation*}
  (\lambda_{(I')} - \lambda) \rd_u \Phns^{I'} = \m{O}(1) \Phns^{J'} + \m{O}(\mu).
\end{equation*}
This is a system of ODEs in the eikonal coordinate $u$.
If $I_0$ has intermediate speed, $\lambda_{(I')} - \lambda$ assumes both positive and negative values as $I'$ varies.
Hence there are both left-moving and right-moving waves with no preferred direction for Gr\"onwall.
In short, the system resembles an ODE \emph{boundary} value problem rather than initial value problem.

These linear terms pose a potential obstruction to shock stability.
Indeed, consider an idealized scenario in which the shock is infinitely degenerate on the whole line, so $\mu = 1 - \tau$.
Replacing $\Phi$ by $\partial_u \Theta$ and neglecting the final term in \eqref{eq:nsx}, the equation becomes
\begin{equation*}
  (1 - \tau) \partial_\tau \phi + \mathbf{L}\phi \approx 0,
\end{equation*}
where $\mathbf{L}_{J'}^{I'} = (\lambda_{(I')} - \lambda) \delta_{J'}^{I'} \partial_u -  (\xi_{I_0 J'}^{I'} + \xi_{J' I_0}^{I'}) \partial_u \Theta$.
Suppose $\psi$ is an eigenfunction of $\mathbf{L}$ with eigenvalue $\gamma$.
Then if $\phi = a(\tau) \psi$, the amplitude $a$ solves $(1 - \tau) \dot a + \gamma a = 0,$ so $a(\tau) = C (1 - \tau)^\gamma$.
In particular, if $\gamma < 0$, perturbations of $\Theta$ in direction $\psi$ diverge at the time of shock, an instability.

We emphasize that this issue only occurs when $I_0$ has intermediate speed.
If $I_0$ is extremal, $\lambda_{(I')} - \lambda$ all share a sign, so \eqref{eq:nsx} \emph{does} have a preferred direction in which we can apply Gr\"onwall.
This is why we place no restriction on extremal shocks in Condition~\ref{cond:L1Th}.
Moreover, the schematic discussion above involves a shock degenerating on the whole line.
As we shall see, restricting degeneracy to a small region suffices to control this kind of behavior.
Thus Condition~\ref{cond:L1Th} permits infinitely degenerate shocks, provided the degeneracy is confined to a small interval.

\subsection{Perturbative unknowns}

We now introduce the perturbative unknowns, denoted by $\Php$.
Quantities in $\Php$ measure the difference between the ``background'' $\Theta$ and the nearby solution $\psi$.
They are thus small in $\eps$, while the original set $\Ph$ contains quantities that are order one.

Of course, we wish to compare $\psi$ itself with $\Theta$, but we shouldn't directly study $\psi(t, x) - \Theta(t, x)$.
Indeed, these functions generally become singular at slightly different points, so their difference need not be small in, say, $\m{C}^1$.
To circumvent this, one could modulate coordinates to align their singularities, but we take a slightly different approach.
Instead, we consider the difference $\psi(\tau, u) - \Theta(\tau, u)$ in the eikonal coordinates.
This amounts to identifying the null gauges adapted to $\psi$ and $\Theta$ respectively, a common procedure in the study of general relativity.
In the same manner, we compare the fundamental unknowns $\Ph$ associated to $\psi$ and $\Theta$ in the $(\tau, u)$ coordinate system.
We note that the simple wave $\Theta$ is automatically globally defined in $(\tau, u)$.

To carry out this comparison, we linearize around $\Theta$ in the $(\tau, u)$ coordinate system.
To ease notation, we introduce notation tracking background and difference quantities.
\begin{definition}
  \label{def:pert-func}
  We use a hat to label quantities based on $\Theta$ rather than $\psi$.
  The fundamental unknowns for $\Theta$ are $\PhT$ and, for example,
  \begin{equation*}
    \psiT(\tau,u) \coloneqq \Theta(\tau,u), \quad \AT(\tau,u) \coloneqq (A \circ \Theta)(\tau,u), \And \rT_I (\tau,u) \coloneqq (r_I \circ \Theta)(\tau,u).
  \end{equation*}  
  To define perturbative unknowns, we subtract quantities for $\psi$ by corresponding quantities for $\Theta$ in $(\tau, u)$ coordinates.
  We denote these with a bar: $\psip \coloneqq \psi - \Theta$, $\Phsp \coloneqq \Phs - \PhsT$, $\mup \coloneqq \mu - \muT$, etc.
  The fundamental perturbative unknowns are $\bar{\mathcal{P}}.$
\end{definition}
We now derive equations for the quantities in $\bar{\Ph}$ in the $(\tau, u)$ coordinates.
\begin{lemma}
  \label{lem:perteq}
  Let $\XiT_J^I \coloneqq (\xiT_{I_0 J}^I + \xiT_{J I_0}^I) \PhsT$, which depends only on $\Theta$.
  Then:
  \begin{enumerate}[label = \textup{(\roman*)}]
  \item
    \label{item:psip}
    For $\psip = \psi - \psiT$, $\partial_\tau \bar\psi = \partial_\tau \psi$.
    
  \item
    \label{item:Phsp}
    \vspace*{2pt}
    For $\Phsp = \Phs - \PhsT$,
    \begin{equation}
      \label{eq:Phsp}
      \begin{aligned}
        \partial_\tau \Phsp = (\XiT_{J'}^{I_0} + &\LaT_S \RT_{J'}^S) \Phns^{J'} + (\xip_{I_0 J'}^{I_0} + \xip_{J' I_0}^{I_0}) \PhsT \Phns^{J'} + (\xi_{I_0 J'}^{I_0} + \xi_{J' I_0}^{I_0}) \Phsp \Phns^{J'}\\
                                                 &+ \mu \xi_{J' K'}^{I_0} \Phns^{J'} \Phns^{K'}+ \LaT_S \Rp_{J'}^S \PhsT \Phns^{J'} + \Lap_S R_{J'}^S \PhsT \Phns^{J'} + \Lambda_S R_{J'}^S \Phsp \Phns^{J'}.
      \end{aligned}
    \end{equation}

  \item
    \label{item:Phns}
    For $\bar\Phns^{I'} = \Phns^{I'}$,
    \begin{equation}
      \label{eq:Phns}
      \begin{aligned}
        \mu \rd_\tau \Phns^{I'} + (\lambda_{(I')} - \lambda) \rd_u \Phns^{I'} = \XiT_{J'}^{I'} \Phns^{J'} &+ (\xip_{I_0 J'}^{I'} + \xip_{I_0 J'}^{I'}) \PhsT \Phns^{J'} \\
        &+(\xi_{I_0 J'}^{I'} + \xi_{J' I_0}^{I'}) \Phsp \Phns^{J'}+ \mu \xi_{J' K'}^{I'} \Phns^{J'} \Phns^{K'}.
      \end{aligned}
    \end{equation}
    
  \item
    \label{item:mup}
    For $\mup = \mu - \muT$,
    \begin{equation}
      \label{eq:mup}
        \partial_\tau \mup = \La_S R_{I_0}^S \Phsp + \Lap_S R_{I_0}^S \PhsT + \LaT_S \Rp_{I_0}^S \PhsT + \mu \Lambda_S R_{J'}^S \Phns^{J'}
    \end{equation}
    and
    \begin{equation}
      \label{eq:L2mu}
        \partial_\tau^2 \mu = \partial_\tau^2 \mup + \partial_\tau^2 \muT = \partial_\tau^2 \mup.
      \end{equation}
  \end{enumerate}
\end{lemma}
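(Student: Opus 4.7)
The plan is to derive each equation by subtracting the corresponding equation for the background quantity from the full equation for the $\psi$-quantity, both written in the eikonal coordinates $(\tau, u)$, and then to reorganize the right-hand side in terms of the perturbative unknowns. The key structural input is Proposition~\ref{prop:sw}: in these coordinates, $\partial_\tau \Theta = 0$ by \ref{item:simple}, $\partial_\tau^2 \muT = 0$ by \ref{item:acceleration}, and $\PhsT$, $\muT$ are the only nontrivial background fundamental unknowns, with $(\partial_x \Theta)^{I'} = 0$, by \ref{item:smooth}. Item \ref{item:psip} and the auxiliary identity $\bar\Phns^{I'} = \Phns^{I'}$ stated in \ref{item:Phns} follow immediately by subtraction: $\partial_\tau \psip = \partial_\tau \psi - \partial_\tau \Theta = \partial_\tau \psi$, and $\bar\Phns^{I'} = \Phns^{I'} - (\partial_x \Theta)^{I'} = \Phns^{I'}$. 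The second identity in \eqref{eq:L2mu} is equally direct from $\partial_\tau^2 \muT = 0$.

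For the three substantive evolution equations \eqref{eq:Phsp}, \eqref{eq:Phns}, and \eqref{eq:mup}, the approach is mechanical. I start from the known equations \eqref{eq:sx}, \eqref{eq:nsx}, and \eqref{eq:Lmu} for the $\psi$-quantities, note that $\partial_\tau = L$ by Lemma~\ref{lem:vec-field-change-of-coord}, and substitute the splittings $\xi = \xiT + \xip$, $\Lambda = \LaT + \Lap$, $R = \RT + \Rp$, $\Phs = \PhsT + \Phsp$, and $\mu = \muT + \mup$. In the $\Phs$ and $\Phns^{I'}$ equations the pure-background right-hand side vanishes, because every term in \eqref{eq:sx} and \eqref{eq:nsx} contains at least one factor of $(\partial_x \Theta)^{I'} = 0$ when evaluated on $\Theta$; consequently there is no $L\PhsT$ contribution to subtract, and the expansion of $L\Phs$ itself produces \eqref{eq:Phsp} and \eqref{eq:Phns}. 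In the $\mu$ equation the background right-hand side is nonzero, and I cancel it against the explicit expression $L \muT = \LaT_S \RT_{I_0}^S \PhsT$ derived in the proof of Proposition~\ref{prop:sw}\ref{item:acceleration}; the residual gives \eqref{eq:mup}.

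The principal bookkeeping point is a uniform splitting convention for the mixed products. For a product such as $\Lambda_S R_{J'}^S \Phs \Phns^{J'}$ in \eqref{eq:sx}, I first split $\Phs = \PhsT + \Phsp$, assigning the full coefficient $\Lambda_S R_{J'}^S$ to the $\Phsp$ piece and producing a residual $\Lambda_S R_{J'}^S \PhsT \Phns^{J'}$; on that residual I then expand $\Lambda R = (\LaT + \Lap)(\RT + \Rp)$ to separate the pure-background contribution from its $\Lap$ and $\Rp$ corrections. The same convention applied to $(\xi_{I_0 J'}^{I_0} + \xi_{J' I_0}^{I_0}) \Phs \Phns^{J'}$ and its $\Phns^{I'}$-analogue in \eqref{eq:nsx} produces the $\XiT$ terms and the $\xip$ corrections via the definition $\XiT_J^I = (\xiT_{I_0 J}^I + \xiT_{J I_0}^I)\PhsT$. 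No analytic obstacle arises: the lemma is a systematic algebraic reorganization of previously established identities, and the only genuine care is to ensure each linear-in-perturbation term is counted exactly once.
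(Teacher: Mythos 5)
Your proposal is correct and follows essentially the same route as the paper: subtract the background quantities in the identified $(\tau,u)$ coordinates, use that $\partial_\tau$ annihilates $\hat\psi$, $\hat\Phi$, and $\partial_\tau\hat\mu$ (equivalently, that the background right-hand sides of \eqref{eq:sx} and \eqref{eq:nsx} vanish since $\hat\phi^{I'}=0$), and then expand the splittings $\xi = \hat\xi + \bar\xi$, $\Lambda R = \hat\Lambda\hat R + \hat\Lambda\bar R + \bar\Lambda R$, $\Phi = \hat\Phi + \bar\Phi$ in \eqref{eq:sx}, \eqref{eq:nsx}, and \eqref{eq:Lmu}. The only (immaterial) difference is that you deduce $\partial_\tau\hat\Phi = 0$ from the vanishing of the background right-hand side rather than quoting it directly from Proposition~\ref{prop:sw}.
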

\begin{proof}
  Because $L$ is a simple wave, $\partial_\tau \h \psi = L \Theta = 0$, and \ref{item:psip} follows.

  For \ref{item:Phsp}, the same implies that $\partial_\tau \h\Phs = 0$,
  Hence \eqref{eq:sx} becomes
  \begin{equation}
    \label{eq:sx-expand}
    \partial_\tau \Phsp = (\xi_{I_0 J'}^{I_0} + \xi_{J' I_0}^{I_0}) \Phs \Phns^{J'} + \mu \xi_{J' K'}^{I_0} \Phns^{J'} \Phns^{K'} + \Lambda_S R_{J'}^S \Phs \Phns^{J'}.
  \end{equation}
  Writing $\Phs = \PhsT + \Phsp$ and $\xi = \xiT + \xip$, we can expand the first factor on the right side as
  \begin{equation*}
    (\xi_{I_0 J'}^{I_0} + \xi_{J' I_0}^{I_0}) \Phs = \XiT_{J'}^{I'} + (\xip_{I_0 J'}^{I'} + \xip_{I_0 J'}^{I'}) \PhsT + (\xi_{I_0 J'}^{I'} + \xi_{J' I_0}^{I'}) \Phsp
  \end{equation*}
  with $\XiT_J^I = (\xiT_{I_0 J}^I + \xiT_{J I_0}^I) \PhsT$.
  Similarly, $\Lambda_S R_{J'}^S = \LaT_S \RT_{J'}^S + \LaT_S \Rp_{J'}^S + \Lap_S R_{J'}^S.$
  With these observations, \eqref{eq:sx-expand} becomes \eqref{eq:Phsp}.

  Because $\Theta$ varies in $r_{I_0}$ alone, $\h{\Phns} = 0$ and $\bar{\Phns} = \Phns$.
  Expanding $(\xi_{I_0 J'}^{I_0} + \xi_{J' I_0}^{I_0}) \Phs$ as above, \eqref{eq:nsx} becomes \eqref{eq:Phns} and we obtain \ref{item:Phns}.

  Finally, for \ref{item:mup}, \eqref{eq:Lmu} yields
  \begin{align*}
    \partial_\tau \mup = \partial_\tau \mu - \partial_\tau \muT &= \Lambda_S R_{I_0}^S \Phs + \mu \Lambda_S R_{J'}^S \Phns^{J'} - \LaT_S \RT_{I_0}^S \PhsT\\
                                                                &= (\LaT_S + \Lap_S) (\RT_{I_0}^S + \Rp_{I_0}^S) (\PhsT + \Phsp) + \mu \Lambda_S R_{J'}^S \Phns^{J'} - \LaT_S \RT_{I_0}^S \PhsT.
  \end{align*}
  Gathering terms, this implies \eqref{eq:mup}.
  Then \eqref{eq:L2mu} follows from Proposition~\ref{prop:sw}.
\end{proof}

\subsection{Structural notation}

In Lemma~\ref{lem:perteq}, we collected the equations for the perturbative quantities $\Php$.
We now introduce structural notation for the kinds of expressions that appear upon commutation of these equations.
In the following, let $\Gamma$ denote one of the commuting vector fields $\partial_\tau$ or $\partial_u$.
Given a multi-index $\al = (\al_1,\al_2)$, let $\Gamma^\alpha = \partial_\tau^{\al_1} \partial_u^{\al_2}$.
\begin{definition}
  Given $k \in \N_0$ and $\m{S} \subset \Php$, let $\m{N}_k[\m{S}]$ denote an expression that is a sum of at most $(N + 10 + k)^{N + 10 + k}$ terms of the form $H \Gamma^\al \m{S}$, where $H$ is a smooth function of $\PhT$ and $\Php$, $\abs{\al} \leq k$, and $\Gamma^\al \m{S}$ denotes $\Gamma^\al$ applied to one of the quantities in $\m{S}$.
  We omit $\m{S}$ when $\m{S} = \Php$, so $\m{N}_k = \m{N}_{k}[\Php]$.
  We let $\m{Q}_k$ denote a similar expression whose terms have the form $H(\Gamma^\al \Php)(\Gamma^\beta \Php)$ with $\abs{\al} + \abs{\beta} \leq k$.
\end{definition}
The bound $(N + 10 + k)^{N + 10 + k}$ on the number of terms accounts for various combinatorial possibilities.
We only mention it to indicate that the dependence of implied constants on $N$ and $k$ can (in principle) be tracked explicitly.

With this notation, we can use Lemma~\ref{lem:perteq} and induction to show:
\begin{proposition}
  \label{prop:schematiceqs}
  For all $\al \in \N_0^2$,
  \begin{align*}
    \mu \partial_\tau (\Gamma^\alpha \Phns^{I'}) + (\lambda_{(I')} - \lambda) \rd_u (\Gamma^\alpha \Phns^{I'}) &= \h{\Xi}_{J'}^{I'} \phi^{J'} + \calN_{|\alpha| - 1} + \calQ_{|\alpha|},\\
    \partial_\tau (\Gamma^\alpha \Phsp) &= \m{N}_{\abs{\al}}[\Phns^{I'}] + \calN_{|\alpha| - 1} + \calQ_{|\alpha|},\\
    \partial_\tau (\Gamma^\alpha \psip) &= \calN_{|\alpha|}[\Phns^{I'},\Phsp] + \calN_{|\alpha| - 1},\\
    \partial_\tau (\Gamma^\alpha \mup) &= \calN_{|\alpha|}[\Phns^{I'},\Phsp,\psip] + \calQ_{|\alpha|}.
  \end{align*}
\end{proposition}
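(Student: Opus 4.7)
The plan is to establish all four schematic identities simultaneously by induction on $|\alpha|$. The $|\alpha| = 0$ case is a direct reading of Lemma~\ref{lem:perteq}; for the induction, I apply one derivative $\Gamma_1 \in \{\partial_\tau, \partial_u\}$ at each step and use the product rule together with the commutation $[\Gamma_1, \partial_\tau] = [\Gamma_1, \partial_u] = 0$.

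\emph{Base case.} In \eqref{eq:Phns} the summand $\XiT_{J'}^{I'} \Phns^{J'}$ is the bad linear term $\h{\Xi}_{J'}^{I'} \phi^{J'}$. The remaining summands, built from $\xip$, $\Lap$, $\Rp$ (each a smooth function of $\psip$), $\Phsp$, or $\mu \Phns^{J'}\Phns^{K'}$, are each bilinear in $\Php$ and lie in $\calQ_0$, with $\calN_{-1} = 0$. The same inspection dispatches \eqref{eq:Phsp} and \eqref{eq:mup}. For \ref{item:psip} I rewrite $\partial_\tau \psip = L\psi = (\lambda - \lambda_{(I')}) \Phns^{I'} r_{I'}$, a linear expression in $\Phns^{I'}$ sitting in $\calN_0[\Phns^{I'}]$.

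\emph{Inductive step.} Suppose the four identities hold for all multi-indices of length at most $n$, and apply $\Gamma_1$ to each. On every left-hand side, $\Gamma_1$ commutes through the coordinate derivatives, promoting the principal derivatives to $\Gamma^{\alpha + \alpha_1}$ modulo commutators with the variable coefficients of the $\Phns^{I'}$ identity. Splitting each coefficient as background plus perturbation (e.g.\ $\mu = \muT + \mup$), background pieces multiplied by top-order derivatives of $\Phns^{I'}$ yield further top-order linear terms with $\PhT$-smooth coefficients (the schematic class of $\h{\Xi}_{J'}^{I'}\phi^{J'}$), while perturbative pieces contribute quadratically. On the right-hand side, the product rule $\Gamma_1(H\,\Gamma^\beta S) = (\Gamma_1 H)\,\Gamma^\beta S + H\,\Gamma^{\beta + \alpha_1} S$ together with $\Gamma_1 H = \partial_1 H \cdot \Gamma_1 \PhT + \partial_2 H \cdot \Gamma_1 \Php$ separates each summand into a background-coefficient $\calN_n$ piece and a piece with one extra $\Gamma_1 \Php$ factor that joins $\calQ_{n+1}$. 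In particular, $\Gamma_1$ applied to the bad term yields the new top-order $\h{\Xi}_{J'}^{I'} \Gamma^{\alpha+\alpha_1}\Phns^{J'}$ plus a lower-order piece in $\calN_n$. The remaining three identities are handled identically and are technically simpler because their left-hand sides are in normal form.

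\emph{Main obstacle.} The most delicate issue arises in the $\Phns^{I'}$ identity, where the commutator $(\Gamma_1 \mup)\,\partial_\tau \Gamma^\alpha \Phns^{I'}$ naively has total bilinear order $|\alpha| + 2$, one above the $\calQ_{|\alpha|+1}$ budget. My remedy is to substitute $\mu\,\partial_\tau \Gamma^\alpha \Phns^{I'}$ via the inductive identity at level $\alpha$, trading it for $(\lambda_{(I')} - \lambda)\,\partial_u \Gamma^\alpha \Phns^{I'}$ modulo admissible data; pairing this with $\Gamma_1 \mup$---itself expanded via the $\mup$-identity of the same proposition when $\Gamma_1 = \partial_\tau$, or via Lemma~\ref{lem:comm-vec-fields} when $\Gamma_1 = \partial_u$---produces admissible pieces of combined order at most $|\alpha| + 1$ in $\Php$, with no surviving negative powers of $\mu$. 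The combinatorial bound $(N + 10 + k)^{N + 10 + k}$ on the summand count is preserved since each inductive step applies Leibniz and a finite number of substitutions, multiplying the summand count by at most a polynomial in $N + k$.
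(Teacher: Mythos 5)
Your overall route is the one the paper intends: its entire proof is the remark that the identities follow from Lemma~\ref{lem:perteq} by commuting with $\Gamma^\alpha$, and your base case and your inductive treatment of the three equations in normal form (for $\Phsp$, $\psip$, $\mup$) are correct and match that intent, as does your reading that the top-order linear term should be $\XiT_{J'}^{I'}\Gamma^\alpha\phi^{J'}$ rather than the literal $\XiT_{J'}^{I'}\phi^{J'}$.

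The genuine problem is your ``main obstacle'' step for the $\Phns^{I'}$ equation. The offending commutator term is $(\Gamma_1\mup)\,\partial_\tau\Gamma^\alpha\phi^{I'}$, which carries no factor of $\mu$; to ``substitute $\mu\,\partial_\tau\Gamma^\alpha\phi^{I'}$ via the inductive identity'' you must first write $\partial_\tau\Gamma^\alpha\phi^{I'}=\mu^{-1}\bigl[-(\lambda_{(I')}-\lambda)\,\partial_u\Gamma^\alpha\phi^{I'}+\cdots\bigr]$, so the coefficient $\mu^{-1}\Gamma_1\mup$ survives, and this is exactly the kind of factor that blows up as $\mu\to 0$ and is inadmissible in the regions where these schematic equations are later used. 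Your claim of ``no surviving negative powers of $\mu$'' is therefore false; moreover, even granting the substitution, the resulting term $(\mu^{-1}\Gamma_1\mup)(\lambda_{(I')}-\lambda)\,\partial_u\Gamma^\alpha\phi^{I'}$ still has combined order $\abs{\alpha}+2$ in derivatives of $\Php$, so nothing is gained in the counting either. (Your side remark that $\partial_u\mup$ can be ``expanded via Lemma~\ref{lem:comm-vec-fields}'' is also off: that lemma gives vector-field commutators, not an identity for $\partial_u\mup$.) The correct, and simpler, resolution is to leave the term alone: it is a product of two derivatives of perturbative quantities, of orders $1$ and $\abs{\alpha}+1$, each of order at most $k$ in every application in Section~\ref{sec:SF}, which is all the bootstraps there require. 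In other words, the schematic budgets in the statement are to be read loosely; the same off-by-one already occurs in the background commutator terms $(\Gamma_1\muT)\,\partial_\tau\Gamma^\alpha\phi^{I'}$ and $\Gamma_1(\h\lambda_{(I')}-\h\lambda)\,\partial_u\Gamma^\alpha\phi^{I'}$, which are linear of order $\abs{\alpha}+1$ with background coefficients and which you correctly lump with the $\XiT$-type linear terms rather than force into $\calN_{\abs{\alpha}}$. What the proposition must deliver --- principal part $\mu L_{(I')}$ acting on $\Gamma^\alpha\phi^{I'}$, linear terms with coefficients bounded by background or controlled quantities, and remaining terms at least quadratic in controlled derivatives of $\Php$ --- follows from your Leibniz computation once the spurious substitution is dropped.
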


\section{Hyperbolic estimates} \label{sec:HyperbolicEsts}

We now prove our fundamental hyperbolic estimates, which come in two flavors depending on the extremality of $I_0$.

We formulate these bounds for a rather general hyperbolic system governing $N$ unknown functions $f^1, \dots, f^N$.
Let $V_{(1)}, \dots, V_{(N)}$ denote $N$ (not necessarily distinct) Lipschitz vector fields and assume the $t$-component of each has a uniform lower bound.
Let $\m{D}$ be a connected globally hyperbolic domain for these vector fields with Cauchy hypersurface $\Sigma$.
Let $\gamma_{I,p}$ denote the unique future-directed affinely parametrized integral curve of $V_{(I)}$ starting at $\Sigma$ and ending at $p$.
That is, $\dot{\gamma}_{I,p} = V_{(I)} \circ \gamma_{I,p}$, $\gamma_{I,p}(0) \in \Sigma$, and $\gamma_{I,p}\big(s_I(p)\big) = p$ for a unique time $s_I(p) > 0$.
Let $\m{T}_J^I$ and $\Pi^I$ denote a continuous bounded matrix and vector, respectively.
Motivated by Lemma~\ref{lem:perteq}, we consider the following system of equations:
\begin{equation}
  \label{eq:lineqs}
  V_{(I)} (f^{I}) = \calT_J^I f^J + \Pi^I
\end{equation}
Because the $V_{(I)}$ have uniformly positive time component and the equation is linear, the existence of $f^I$ in $\m{D}$ follows from standard local existence theory.

Our first estimate applies to extremal shocks.
\begin{proposition}
  \label{prop:r1est}
  Suppose $v$ is a Lipschitz function on $\m{D}$ for which $V_{(I)} v$ has the same sign for all $I$ and $M \coloneqq \max_I \sup_{\m{D}} |V_{(I)} v|^{-1} < \infty$.
  Let
  \begin{equation*}
    D_0 \coloneqq \max_I \sup_{\m{D}} |f^I|_{\Sigma}|, \quad D_1 \coloneqq \max_I \sum_J \sup_{\m{D}} |\calT_J^I|, \And D_2 \coloneqq \max_I \sup_{\m{D}} |\Pi^I|.
  \end{equation*}
  Then
  \begin{equation}
    \label{eq:r1est}
    \max_I \sup_{\m{D}} |f^{(I)}| \le (D_0 + M D_2 \op{osc}v) \exp(M D_1 \op{osc} v).
  \end{equation}
\end{proposition}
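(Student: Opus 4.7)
The plan is to use $v$ as a common ``time'' parameter along every characteristic. Since $V_{(I)} v$ has a uniform sign and is bounded below in magnitude by $1/M$, the function $v$ is strictly monotone along each integral curve $\gamma_{I,p}$. Reparameterizing each such curve by $v$ converts the transport equation \eqref{eq:lineqs} into a family of ODEs in a single common parameter, to which a Gr\"onwall argument applies uniformly across $I$.

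After possibly replacing $v$ by $-v$, I may assume $V_{(I)} v > 0$ throughout $\m{D}$. Integrating \eqref{eq:lineqs} along $\gamma_{I,p}$ from $\Sigma$ to $p$ and changing the variable of integration from arclength $s$ to $w = v \circ \gamma_{I,p}$, so that $dw/ds = (V_{(I)} v) \circ \gamma_{I,p} \ge 1/M$, gives
\begin{equation*}
  f^I(p) = f^I\bigl(\gamma_{I,p}(0)\bigr) + \int_{w_0}^{v(p)} \frac{\calT_J^I f^J + \Pi^I}{V_{(I)} v}\bigl(\gamma_{I,p}(s(w))\bigr) \, dw,
\end{equation*}
where $w_0 \in v(\Sigma)$ and $\bigl|1/V_{(I)} v\bigr| \le M$. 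Setting $v_{\min} \coloneqq \inf_{\m{D}} v$ and
\begin{equation*}
  G(v_*) \coloneqq \sup\bigl\{ |f^I(q)| : q \in \m{D},\ v(q) \le v_*,\ I \in [N] \bigr\},
\end{equation*}
the estimate $|\calT_J^I f^J + \Pi^I| \le D_1 \max_J |f^J| + D_2 \le D_1 G(w) + D_2$ together with $|f^I(\gamma_{I,p}(0))| \le D_0$ yields the integral inequality
\begin{equation*}
  G(v_*) \le D_0 + M \int_{v_{\min}}^{v_*} \bigl( D_1 G(w) + D_2 \bigr) \, dw.
\end{equation*}
Gr\"onwall's inequality then produces $G(v_*) \le \bigl(D_0 + M D_2 (v_* - v_{\min})\bigr) \exp\bigl(M D_1 (v_* - v_{\min})\bigr)$, and evaluating at $v_* = \sup_{\m{D}} v$, so that $v_* - v_{\min} = \op{osc} v$, gives \eqref{eq:r1est}.

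The argument itself is short, so the main subtlety is ensuring that $G(v_*)$ is a priori finite, since $\m{D}$ need not be compact. I would handle this by exhausting $\m{D}$ with precompact globally hyperbolic subdomains $\m{D}_n$ on which the solution is continuous and hence bounded, deriving the estimate above on each $\m{D}_n$ with a constant that is uniform in $n$, and passing to the limit. The essential structural input is the common-sign hypothesis on $V_{(I)} v$: it is precisely what allows one monotone scalar to simultaneously orient the Gr\"onwall integration for every component of $f$. When the $V_{(I)} v$ admit mixed signs---the intermediate-speed case described in Section~\ref{sec:badlin}---no such $v$ exists and this proof genuinely fails, which motivates the separate analysis needed there.
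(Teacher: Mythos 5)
Your proof is correct and takes essentially the same route as the paper: after fixing the sign of $V_{(I)}v$, both arguments divide the transport equations by $V_{(I)}v$ so that $v$ acts as a common time parameter along every characteristic, and then close with Gr\"onwall, yielding exactly the bound \eqref{eq:r1est}. The only cosmetic differences are that the paper runs the Gr\"onwall argument with suprema over the level sets $\{v = s\}$ rather than your sublevel sets, and your explicit exhaustion argument for the a priori finiteness of the supremum is a reasonable precaution that the paper leaves implicit.
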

\begin{proof}
  Perhaps replacing $v$ by $-v$, we are free to assume that $V_{(I)} v \geq M^{-1} > 0$ in $\m{D}$ for all $I \in [N]$.
  We then scale \eqref{eq:lineqs} by $(V_{(I)}v)^{-1}$:
  \begin{equation}
    \label{eq:lineqs-scaled}
    (V_{(I)}v)^{-1} V_{(I)} f^I = (V_{(I)}v)^{-1} \calT_J^I f^J + (V_{(I)}v)^{-1} \Pi^I.
  \end{equation}
  Due to the consistent sign of $V_{(I)}v$, the fields $(V_{(I)}v)^{-1} V_{(I)}$ also have uniformly positive $t$-components and Cauchy hypersurface $\Sigma$.
  We use $v$ as a time coordinate in a Gr\"onwall argument.

  Let $\Gamma_s \coloneqq \{v = s\}$ denote the $s$-level set of $v$ for $s \in v(\m{D})$.
  Given $p \in \Gamma_s$ and $I \in [N]$, let $\tilde{\gamma}_{I,p}$ denote the future-directed integral curve of $(V_{(I)}v)^{-1} V_{(I)}$ such that $\tilde{\gamma}_{I,p}(s) = p$.
  Let $\sigma_{I,p} < s$ denote the unique parameter time at which $\ti{\gamma}_{I,p}(\sigma_{I,p}) \in \Sigma$.
  By design, $(v \circ \ti{\gamma}_{I,p})' = 1$ and $(v \circ \ti{\gamma}_{I,p})(s) = v(p) = s$, so in fact $(v \circ \ti{\gamma}_{I,p})(r) = r$ for all $r \in [\sigma_{I,p}, s]$.
  In other words, $\ti{\gamma}_{I,p}(r) \in \Gamma_r$ for all such $r$.
  In particular, $\sigma_{I,p} \geq \inf v$.

  Now if we integrate \eqref{eq:lineqs-scaled} along $\ti{\gamma_{I,p}}$, we obtain
  \begin{equation*}
    \abss{f^I(p)} \leq \abss{f^I \circ \tilde{\gamma}_{I,p}}(\sigma_{I,p}) + \int_{\sigma_{I,p}}^s \big[\abss{V_{(I)}v}^{-1}\big(\abss{\m{T}_J^I f^J} + \abss{\Pi^I}\big)\big] \circ \tilde{\gamma}_{I,p}(r) \d r.
  \end{equation*}
  Hence if we define $F(r) \coloneqq \max_J \sup_{\Gamma_r} \abss{f^J}$, we can write
  \begin{equation*}
    \abss{f^I(p)} \leq D_0 + M \int_{\sigma_{I,p}}^s [D_1 F(r) + D_2] \d r \leq D_0 + M \int_{\inf v}^s [D_1 F(r) + D_2] \d r.
  \end{equation*}
  Taking the supremum over $p \in \Gamma_s$ and $I \in [N]$, we obtain
  \begin{equation*}
    F(s) \leq D_0 + M \int_{\inf v}^s [D_1 F(r) + D_2] \d r \ForAll s \in v(\m{D}).
  \end{equation*}
  Applying Gr\"onwall and taking a supremum over $s \in v(\m{D})$, \eqref{eq:r1est} follows.
\end{proof}
In the case of extremal shocks, we apply this result with $v = u$, which is possible because $L_{(I')}u$ has a consistent sign.
When $I_0$ is intermediate, we use the result in a large region where $\mu$ is (very weakly) bounded from below.
In the complementary region very close to the shock, we can instead assume that certain coefficients are small in $L^1$, due to the smallness of the region.
We then apply the following:
\begin{proposition}
  \label{prop:r2est}
  Define $D_0$ as in Proposition~\ref{prop:r1est} as well as
  \begin{equation*}
    \kappa \coloneqq \max_I \sup_{p \in \calD} \sum_J \int_0^{s_{(I)} (p)} |\calT_J^I| \circ \gamma_{I,p} (s) \d s
  \end{equation*}
  and
  \begin{equation*}
    \beta D_0 \coloneqq \max_I \sup_{p \in \calD} \int_0^{s_{(I)} (p)} |\Pi^I| \circ \gamma_{I,p} (s) \d s.
  \end{equation*}
  Then if $\kappa < 1$,
  \begin{equation*}
    \max_I \sup_{\overline{\calD}} |f^I| \le \f {1 + \beta} {1 - \kappa} D_0.
  \end{equation*}
\end{proposition}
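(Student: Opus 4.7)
The argument should be a closed-loop $L^\infty$ estimate obtained by integrating \eqref{eq:lineqs} along the characteristics $\gamma_{I,p}$. Unlike Proposition~\ref{prop:r1est}, there is no preferred ``time'' coordinate and the coefficient $\m{T}_J^I$ is only controlled in $L^1$ along characteristics. However, the hypothesis $\kappa < 1$ lets us absorb the resulting integral directly into the left-hand side, avoiding Gr\"onwall entirely.

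Concretely, fix $p \in \m{D}$ and $I \in [N]$. Integrating \eqref{eq:lineqs} along $\gamma_{I,p}$ from $\Sigma$ (at parameter time $0$) to $p$ (at parameter time $s_{(I)}(p)$) and applying the triangle inequality gives
\[
|f^I(p)| \leq |f^I(\gamma_{I,p}(0))| + \int_0^{s_{(I)}(p)} |\m{T}_J^I f^J| \circ \gamma_{I,p}(s) \, ds + \int_0^{s_{(I)}(p)} |\Pi^I| \circ \gamma_{I,p}(s) \, ds.
\]
Setting $F \coloneqq \max_I \sup_{\overline{\m{D}}} |f^I|$ and using $|f^J \circ \gamma_{I,p}(s)| \leq F$ on the first characteristic integral, the definitions of $D_0$, $\kappa$, and $\beta D_0$ yield
\[
|f^I(p)| \leq D_0 + \kappa F + \beta D_0.
\]
Taking the supremum over $I$ and $p \in \overline{\m{D}}$ produces $F \leq (1 + \beta) D_0 + \kappa F$, and rearranging using $\kappa < 1$ gives precisely $F \leq \frac{1 + \beta}{1 - \kappa} D_0$.

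The one subtle point---and the only real obstacle---is justifying the a priori finiteness of $F$ before one is allowed to absorb $\kappa F$. Since $\m{D}$ need not be precompact, I would introduce a nested exhaustion $\{\m{D}^T\}_{T > 0}$ of sub-domains of $\m{D}$ that are globally hyperbolic with respect to the $V_{(I)}$ and on which $f^I$ is bounded by local existence (e.g., $\m{D}^T$ could consist of points reachable from a compact piece $\Sigma \cap K_T$ of $\Sigma$ along integral curves of the $V_{(I)}$ of parameter length at most $T$). On each $\m{D}^T$ the corresponding supremum $F^T$ is finite, and the quantities analogous to $D_0$, $\kappa$, and $\beta D_0$ computed on $\m{D}^T$ are dominated by their global counterparts. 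Applying the closed-loop inequality above on $\m{D}^T$ yields $F^T \leq \frac{1+\beta}{1-\kappa} D_0$ uniformly in $T$, and passing $T \to \infty$ gives the claimed bound on all of $\overline{\m{D}}$. Once this exhaustion is in place the rest of the proof is essentially a one-line algebraic rearrangement.
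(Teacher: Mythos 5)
Your core inequality is exactly the paper's: integrate \eqref{eq:lineqs} along $\gamma_{I,p}$, bound the coupling term by $\kappa$ times the sup, and use $\kappa<1$ to absorb. The difference is how the absorption is made legitimate, and that is where your argument has a genuine gap. The absorption step is only valid if $F=\max_I\sup|f^I|$ is finite \emph{a priori}, and you correctly flag this, but the exhaustion you propose does not secure it. If $\m{D}^T$ consists of points reachable from $\Sigma\cap K_T$ along integral curves of parameter length at most $T$, then as soon as $T$ exceeds the width $\calW(\m{D},\Sigma)$ (which is small in the intended application, so every point of $\m{D}$ is reached in short parameter time), $\m{D}^T$ already contains points arbitrarily close to the future boundary of $\m{D}$. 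Continuity of $f^I$ on $\m{D}$ gives no bound there: a uniform bound up to that boundary is precisely what the proposition is meant to prove, so invoking ``local existence'' to get $F^T<\infty$ is circular. (There is also a secondary issue: for the absorption to close on $\m{D}^T$ you need $\m{D}^T$ to contain, for every $p\in\m{D}^T$ and every $I$, the whole backward curve $\gamma_{I,p}$; the reachable-set definition is not obviously past-closed under all the flows $V_{(I)}$, though this is repairable.) To make your route work you would have to exhaust $\m{D}$ by subdomains that are both globally hyperbolic with Cauchy hypersurface in $\Sigma$ \emph{and} compactly contained in $\m{D}\cup\Sigma$, and prove that such sets exhaust $\m{D}$ --- a nontrivial point when the future boundary is characteristic, and one your sketch does not address.

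The paper avoids the finiteness issue altogether by a continuous-induction (bootstrap) argument: let $\m{D}'$ be the maximal globally hyperbolic subset of $\overline{\m{D}}$ with Cauchy hypersurface $\Sigma$ on which $\max_I|f^I|\le \zeta D_0$ with $\zeta=\frac{1+\beta}{1-\kappa}+\delta_0$. Closedness is immediate; openness follows from exactly your computation, since integrating along characteristics under the bootstrap hypothesis gives $|f^I(p)|\le(1+\kappa\zeta+\beta)D_0<\zeta D_0$, and local existence plus continuity then extends the bound to a neighborhood; connectedness gives $\m{D}'=\m{D}$, and $\delta_0\searrow 0$ yields the stated constant. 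So your estimate coincides with the paper's improvement step, but you should replace the exhaustion by such an open--closed argument (or supply the missing compact, causally past-closed exhaustion) to close the proof.
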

\begin{proof}
  Fix $\delta_0 > 0$.
  Let  $\calD'$ be the maximal globally hyperbolic subset of $\overline{\calD}$ with Cauchy hypersurface $\Sigma$ such that
  \begin{equation}
    \label{eq:L1-boot}
    \max_I \sup_{\calD'} |f^I| \le \zeta D_0
  \end{equation}
  for $\zeta \coloneqq \frac{1 + \beta}{1 - \kappa} + \delta_0$.
  This is clearly closed.
  We show it is open by improving on the bootstrap constant $\zeta$.

  Given $p \in \m{D}'$ and $I \in [N]$, we integrate \eqref{eq:lineqs} along $\gamma_{I,p}$ to obtain
  \begin{align*}
    \abss{f^I (p)} \leq D_0 + \int_0^{s_{(I)} (p)} (\abss{\m{T}_J^I} \abss{f^J} + \abss{\Pi^J}) \circ \gamma_{I,p}(s) \d s \leq (1 + \kappa \zeta + \beta) D_0.
  \end{align*}
  Rearranging $\zeta > \frac{1 + \beta}{1 - \kappa}$, we obtain $1 + \kappa \zeta + \beta < \zeta$.
  As this holds for all $I$, we have improved on our bootstrap assumption \eqref{eq:L1-boot}.
  
  By local existence and continuity, we can extend  \eqref{eq:L1-boot} to a neighborhood of $\m{D}'$.
  That is, $\m{D}'$ is open.
  Because $\m{D}$ is connected and $\m{D}'$ is nonempty by local existence, $\m{D}' = \m{D}$.
  Since $\delta_0 > 0$ is arbitrary, the proposition follows.
\end{proof}
We now introduce the following notation for the longest integration time we encounter in $\m{D}$:
\begin{equation}
  \label{eq:width}
  \calW(\calD,\Sigma) \coloneqq \max_I \sup_{p \in \m{D}} s_I(p).
\end{equation}
In our applications of Proposition~\ref{prop:r2est}, $\m{T}_J^I$ depends only on $\PhT$ and is thus order $1$.
We therefore satisfy the required $L^1$ smallness ($\al < 1$) once $\m{W}$ is sufficiently small.

\section{Proof of shock formation}
\label{sec:SF}

In this section, we prove Theorem~\ref{thm:SF}.
Most of the effort goes into the $(\tau, u)$-smoothness in part \ref{item:fund-smooth}.
Once this is established, parts \ref{item:shock-time} and \ref{item:MGHD} follow painlessly.

The structure of our argument depends on the extremality of the shock.
If $I_0$ is extremal, we can prove smoothness using a bootstrap and Proposition~\ref{prop:r1est}.
If $I_0$ is intermediate, we instead work in two regions: one causally closed region away from the shock, and its complement very near the shock.
In the first, $\mu$ has a uniform lower bound, and smoothness essentially follows from continuity in initial data.
In the second region, we use smallness to apply Proposition~\ref{prop:r2est}.

\subsection{Preliminaries}
Fix $k \geq 0$ representing the degree of regularity in Theorem~\ref{thm:SF}~\ref{item:fund-smooth}.
The background solution $\Theta$ is regular for $\tau \in [0, 1/10]$ and the initial perturbation is initially of order $\eps$.
Hence if we replace $\eps$ by $\eps/C$ for some $C(k) \gg 1$, continuous dependence on data implies that $\abss{\Gamma^\al \Php} \leq \eps$ for all $\abs{\al} \leq k$ and $\tau \in [0, 1/10]$.
We assume this throughout the section.
Strictly speaking this requires us to use $\eps/C$ in Theorem~\ref{thm:SF}, but we suppress this point.
It can be resolved by shrinking $\eps$ after the fact.

Next, Condition~\ref{cond:L1Th} provides a lower bound for $\mu$ where $u \not \in [u_1, u_2]$ with $u_2 - u_1 \leq 2 \eta$.
The essential properties of the eikonal function are unchanged if we shift it by a constant, so in the reminder of the section we center this interval about $u = 0$ and assume that $[u_1, u_2] = [-\eta, \eta]$.

Finally, our analysis involves various nonlinear function of $\psi$ such as $\lambda$ and $\xi$.
We work within a compact region of state space, so these functions are uniformly Lipschitz.
Hence we can control their perturbations in terms of the perturbation $\psip$ of $\psi$ itself.
We record this in the following lemma.
\begin{lemma}
  \label{lem:PGpest}
  Suppose $h = f \circ \psi - f \circ \Theta$ for some smooth $f$.
  Then $\abs{h} \leq C(A,\PhT) \abss{\psip}$.
\end{lemma}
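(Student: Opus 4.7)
The plan is to prove this by the fundamental theorem of calculus applied along the straight line segment in state space from $\Theta(\tau,u)$ to $\psi(\tau,u)$. The key observation is that both $\Theta$ and $\psi$ take values in a compact subset of $\m{V}$: the simple wave $\Theta$ was constructed to take values in a compact set (as noted after Proposition~\ref{prop:sw}), and since $\psip$ is a small perturbation controlled in $\m{C}^0$, the values of $\psi$ lie in a slightly enlarged compact set. Any smooth $f$ is Lipschitz on such a compact set.

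Concretely, I would write
\begin{equation*}
  h(\tau,u) = f(\psi(\tau,u)) - f(\Theta(\tau,u)) = \int_0^1 \Der f\big(\Theta + s\,\psip\big)\cdot \psip \, \d s,
\end{equation*}
which is valid as long as the segment $\{\Theta + s\psip : s \in [0,1]\}$ lies in a set on which $f$ is smooth. This is ensured by the compactness of the region where the waves live, which in turn is determined by the background data $\PhT$ (through $\Theta$) and by the coefficient $A$ (which fixes the geometry of state space relevant to the problem). Taking absolute values gives
\begin{equation*}
  \abs{h(\tau,u)} \leq \sup_{s \in [0,1]} \absb{\Der f(\Theta + s\,\psip)} \cdot \absb{\psip(\tau,u)} \leq C(A,\PhT)\,\absb{\psip(\tau,u)},
\end{equation*}
where the constant absorbs the sup of $\abs{\Der f}$ over the fixed compact region in $\m{V}$.

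There is no real obstacle here; the only subtlety is verifying that $\psi$ stays in the relevant compact set so that the segment of integration is valid. This follows because the preliminaries of this section already stipulate that $\abss{\Gamma^\alpha \Php} \leq \eps$, and in particular $\abss{\psip} \leq \eps$, so $\psi$ lies in a small neighborhood of the compact range of $\Theta$. The dependence of the constant on $\PhT$ captures this compact range, and the dependence on $A$ reflects the ambient structure (including the change-of-basis matrix $R$ and the eigenstructure) that may enter when $f$ is one of the functions of interest in applications, such as $\lambda$, $\xi$, or components of $R$ and $L$.
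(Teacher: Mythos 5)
Your proof is correct and matches the paper's reasoning: the paper justifies this lemma in one line by noting that all quantities take values in a fixed compact region of state space, where any smooth $f$ is uniformly Lipschitz, which is exactly the content of your FTC-along-a-segment argument. The only point worth noting is that the segment from $\Theta$ to $\psi$ stays in a small (compact) neighborhood of the range of $\Theta$ precisely because $\abss{\psip} \le \eps$, which you correctly invoke, so there is no gap.
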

Given these observations, we now show that $\psi$ remains regular in $(\tau, u)$, beginning with the more difficult intermediate-speed case.

\subsection{Eikonal regularity for intermediate shocks}

Away from the shock, we apply Gr\"onwall in a certain time coordinate $\taut$.
This begins from a spacelike Lipschitz curve $\Sigt_0$ constructed as follows.
Recall Definition~\ref{def:pert-func} and the constants $\eta,\delta_1$, and $\delta_2$ from Condition~\ref{cond:L1Th}.
Setting
\begin{equation*}
  \lambda^* \coloneqq \max_I \sup \abss{\hat{\lambda}_{(I)} - \lambda},
\end{equation*}
we let $\Sigt_0$ denote the following trapezoidal curve (see Figure~\ref{fig:twiddle-time}):
\begin{equation}
  \label{eq:taut}
  \tau =
  \begin{cases}
    0 & \text{if } \abs{u} \leq \eta,\\
    \frac{\delta_2}{1 + \lambda^*} \abs{u - \eta} & \text{if } \abs{u} \in (\eta, \eta + \delta_1],\\[3pt]
    \frac{\delta_1\delta_2}{1 + \lambda^*} & \text{if } \abs{u} > \eta + \delta_1.
  \end{cases}
\end{equation}
\begin{figure}[t]
  \centering
  \includegraphics[width = 0.7\linewidth]{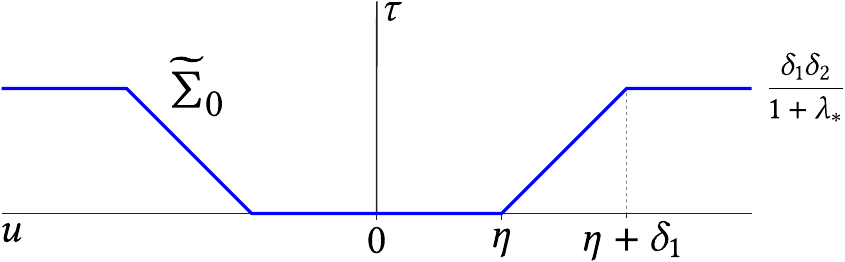}
  \caption{Profile of $\Sigt_0$ in eikonal coordinates.}
  \label{fig:twiddle-time}
\end{figure}
Because $\delta_2 \leq 1/10$, the slope $\frac{\delta_2}{1 + \lambda^*}$ ensures that $\Sigt_0$ is uniformly spacelike with respect to $\Theta$.

We now set $\taut = 0$ on $\Sigt_0$ and extend it via $\rd_\tau \taut = 1$.
Then $\taut$ is uniformly Lipschitz in $(\tau, u)$ and its level sets $\Sigt_s \coloneqq \{\taut = s\}$ are vertical translates of $\Sigt_0$.
By continuity, we may choose $\eps \ll 1$ so that $\psi$ is well behaved until $\Sigt_0$.
For simplicity of exposition, we assume that $\eps$ controls the data induced on $\Sigt_0$ as well as on $\{\tau = 0\}$.

\subsubsection{Away from shock formation}
We now prove estimates ``away from the shock'' where $0 \leq \taut \leq 1 - \delta$ for $\delta \coloneqq \delta_1\delta_2/[2(1 + \lambda^*)]$.
\begin{proposition}
  \label{prop:bs1}
  There exists $\eps_1 > 0$ such that for all $\eps < \eps_1$, the quantities $\Php$ exist in the globally hyperbolic region $\calDt_\delta \coloneqq \{0 \leq \taut \leq 1 - \delta\}$.
  Moreover, for all $k \in \N_0$, there exist $M_1,M_2 > 0$ such that
  \begin{equation*}
    \norms{\Php}_{\m{C}_{\tau,u}^k(\calDt_\delta)} \coloneqq \max_{h \in \Php} \norm{h}_{\m{C}_{\tau,u}^k(\calDt_\delta)} \leq M_2 \eps \exp\big[M_1 \big(1 + \delta^{-1} + \delta_2^{-1}\big)\big].
  \end{equation*}
\end{proposition}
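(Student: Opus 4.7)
The plan is to establish Proposition~\ref{prop:bs1} by a continuous induction on $k$ that, at each level of regularity, closes a Gr\"onwall-type bootstrap in the $\taut$ coordinate via Proposition~\ref{prop:r1est}. The trapezoidal profile of $\Sigt_0$ was designed precisely so that $\calDt_\delta$ is globally hyperbolic with Cauchy hypersurface $\Sigt_0$, on which the perturbative quantities and their derivatives of order $\le k$ are $O(\eps)$ by continuous dependence applied on $\tau \in [0, 1/10]$. Existence throughout $\calDt_\delta$ will then be a byproduct of the bootstrap: uniform smoothness plus a uniform lower bound on $\mu$ prevents breakdown of the $(\tau, u)$ chart, so standard local existence extends the solution to the entire region.

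For the base case $k = 0$, I bootstrap $\|\Php\|_{L^\infty(\calDt_\delta)} \lesssim \eps$ and verify the hypotheses of Proposition~\ref{prop:r1est} for the system in Lemma~\ref{lem:perteq}. Writing $\taut(\tau, u) = \tau - \tau_0(u)$ with $|\tau_0'| \le \delta_2/(1+\lambda^*)$, the field $\partial_\tau$ driving the $\Phsp$, $\psip$, $\mup$ equations satisfies $\partial_\tau\taut = 1$, while dividing \eqref{eq:Phns} by $\mu$ converts the $\Phns^{I'}$ equation into one along $\partial_\tau + \mu^{-1}(\lambda_{(I')}-\lambda)\partial_u$, which acts on $\taut$ by at least $(1+\lambda^*)^{-1} > 0$ provided $\mu$ is uniformly bounded below. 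The latter bound I obtain by splitting on $u$: Condition~\ref{cond:L1Th} gives $\mu \ge \delta_2$ for $|u| > \eta$, since every point of $\calDt_\delta$ has $\tau \le 1 + \delta_1$, while for $|u| \le \eta$ we have $\taut = \tau \le 1 - \delta$, and \eqref{eq:L2mu} combined with the bootstrap shows that $\mu$ is an $O(\eps)$ perturbation of the affine simple-wave profile $1 + \tau L\mu|_{\tau=0}$, which is $\ge \delta$ throughout this range. The coefficients $\calT$ and sources $\Pi^I$ are then controlled in terms of $\PhT$ and the bootstrap, and applying Proposition~\ref{prop:r1est} with $\op{osc}\,\taut \le 1 - \delta$ and $M \lesssim \delta^{-1}$ yields the exponential bound, improving the bootstrap once $\eps$ is small.

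The inductive step commutes $\Gamma^\alpha$ through the perturbative system and invokes Proposition~\ref{prop:schematiceqs}. The right-hand side decomposes into (i) the top-order linear coupling $\XiT_{J'}^{I'}\phi^{J'}$ carried along in the $\Phns$ equation, which is absorbed into $\calT$; (ii) the $\calN_{|\alpha|-1}$ contributions involving strictly lower-order derivatives, controlled by the inductive hypothesis and collected into $\Pi$; and (iii) the quadratic $\calQ_{|\alpha|}$ terms, whose top-order factor is always paired with a zero-order factor of bootstrap size $O(\eps)$ and so contributes to $\calT$ with negligibly small norm. Reapplying Proposition~\ref{prop:r1est} at level $k$ then produces the claimed estimate, with the exponent accumulating as $M(D_1\op{osc}\,\taut) \lesssim \delta^{-1} + \delta_2^{-1}$.

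The main obstacle is securing the uniform lower bound on $\mu$ on the degenerate interval $|u| \le \eta$, where the ``bad'' linear coupling of Section~\ref{sec:badlin} could otherwise obstruct the Gr\"onwall estimate. The cancellation $\partial_\tau^2\mu = \partial_\tau^2\mup$ from \eqref{eq:L2mu}, combined with smallness of $\partial_\tau\mup$ under the bootstrap, supplies the crucial rigidity: it forces $\mu$ to almost exactly reproduce its affine simple-wave profile, and the choice $\delta = \delta_1\delta_2/[2(1+\lambda^*)]$ is calibrated so that this profile remains $\ge \delta$ for $\taut \le 1 - \delta$. Once this quantitative control on $\mu$ is in hand, the remainder of the argument is a direct iteration of Proposition~\ref{prop:r1est} with standard commutator bookkeeping.
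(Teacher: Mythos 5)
Your proposal follows essentially the same route as the paper: a bootstrap on the $\m{C}^k_{\tau,u}$ norm in the region $\{0 \le \taut \le 1-\delta\}$, the key lower bound $\mu \gtrsim \min\{\delta,\delta_2\}$ obtained by splitting on $\abs{u} \lessgtr \eta$ (Condition~\ref{cond:L1Th} off the degenerate interval, the affine background profile plus smallness of $\mup$ on it), positivity of the transversal characteristic fields' action on $\taut$ from the trapezoid slope $\delta_2/(1+\lambda^*)$, and an application of Proposition~\ref{prop:r1est} with $v=\taut$ to the commuted system of Proposition~\ref{prop:schematiceqs}. The only differences---organizing the argument as an induction on $k$ rather than a single bootstrap over all $\abs{\al}\le k$, rescaling the $\Phns^{I'}$ equation by $\mu^{-1}$ instead of using $\mu L_{(I')}$, and folding the $\partial_\tau$-transport equations into the same Gr\"onwall rather than integrating them afterward---are cosmetic, and your treatment of the quadratic and lower-order terms matches the paper's.
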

\begin{proof}
  Let $s_0$ be the smallest value in $[\delta, 1]$ such that
  \begin{equation}
    \label{eq:far-boot}
    \norms{\Php}_{\m{C}_{\tau,u}^k(\calDt_{s_0})} \leq \eps^{3/4}.
  \end{equation}
  By improving on this bootstrap condition, we show that $s_0 = \delta$ provided $\eps \ll 1$.

  For the background wave, we have $\inf_{\calDt_{s_0}} \muT \ge \min\{\delta, \delta_2\}$.
  Thus by the bootstrap assumption \eqref{eq:far-boot},
  \begin{equation}
    \label{eq:mulower1}
    \inf_{\calDt_{s_0}} \mu \geq \min\{\delta, \delta_2\} - \eps^{3 / 4} \geq \f 1 2 \min\{\delta, \delta_2\} \quad \text{for } \eps \ll 1.
  \end{equation}
  This is the key observation: it implies that our equations are not singular in $\calDt_{s_0}$, so we can use routine hyperbolic estimates to propagate smallness.
  
  We wish to apply Proposition~\ref{prop:r1est} with $v = \taut$ to $\Gamma^\al \phi^{I'}$ for $\abs{\al} \leq k$.
  By Proposition~\ref{prop:schematiceqs}, these derivatives are governed by the operator $\mu L_{(I')} = \mu \partial_\tau + (\lambda_{(I)} - \lambda)\partial_u$.
  We therefore check that $\mu L_{(I')} \taut$ is uniformly positive, independent of $\eps$.
  To see this, we first observe that $\partial_\tau \taut = 1$.
  Hence where $\abs{u} \leq \eta$, \eqref{eq:taut} and \eqref{eq:mulower1} yield $\partial_u \taut = 0$ and $\mu L_{(I')} \taut = \mu \gtrsim 1$.
  Where $\abs{u} > \eta$, Condition~\ref{cond:L1Th} yields $\h\mu \geq \delta_2$ and hence by \eqref{eq:taut}, $\h \mu \h L_{(I')} \taut \geq \delta_2/(1 + \lambda^*)$.
  The bootstrap assumption then ensures that $\mu L_{(I')} \taut \gtrsim 1$.

  Using the equations from Proposition~\ref{prop:schematiceqs}, we can now apply Proposition~\ref{prop:r1est} with $v = \taut$, $V_{(I')} = \mu L_{(I')}$, and $f^{I'} = \Gamma^\al \phi^{I'}$.
  We have $M,D_1, \op{osc} v \lesssim 1$, while $D_0 \lesssim \eps$ and, by the bootstrap \eqref{eq:far-boot}, $D_2 \lesssim \eps^{3/2}$.
  Thus \eqref{eq:r1est} yields $\abss{\Gamma^\al \phi^{(I')}} \lesssim \eps$.
  Varying $\al$, we see that
  \begin{equation*}
    \norms{\phi^{I'}}_{\m{C}_{\tau,u}^k(\calDt_{s_0})} \lesssim \eps.
  \end{equation*}
  Integrating the remaining equations in Proposition~\ref{prop:schematiceqs} in $\tau$, we can successively conclude the same for $\Phsp$, $\psip$, and $\mup$.
  That is,
  \begin{equation*}
    \norms{\Php}_{\m{C}_{\tau,u}^k(\calDt_{s_0})} \lesssim \eps.
  \end{equation*}
  If $\eps$ is sufficiently small, this improves the bootstrap \eqref{eq:far-boot}, as desired.
\end{proof}

\subsubsection{Near shock formation}

We now control the fundamental unknowns in the region above $\Sigt_{1 - \delta}$ and below $\{\tau = 1 + \delta\}$.
To do so, we run a bootstrap argument in the quantity
\begin{equation}
  \label{eq:mus}
  \mus (p) \coloneqq \inf\big\{\mu(q) : q \in J^- (p) \cap \{ \taut \ge 1 - \delta \}\big\}
\end{equation}
Here $J^-(p)$ denotes the causal past of $p$, namely the set of past points that can be connected to $p$ via curves with slopes bounded between $\lambda_1$ and $\lambda_N$ (see Definition~\ref{def:causal}).
So $\mus$ is the infimum of $\mu$ in the portion of the causal past near the shock.
We let $\Sigs_s$ denote the level set $\{\mus = s\}$.
Then points in $\Sigs_0$ have a singularity ($\mu = 0$) in their causal past.
For this reason, $\mus$ is well-suited to the study of the boundary $\m{B}_{1 + \delta}$ of our MGHD.

To confine our attention near the shock, we truncate $\mus$ beyond a time slice.
We define
\begin{equation}
  \label{eq:muts}
  \muts (p) \coloneqq \min\{\mus(p), 1 + \delta - \tau(p)\}
\end{equation}
and write $\Sigts_s \coloneqq \{\muts = s\}$.
We solve our equation up to $\Sigts_0$, which is in fact the future boundary $\m{B}_{1 + \delta}$ of our MGHD in a box.
We have chosen parameters so that $\mu \to 0$ somewhere along this boundary, so it does include a singularity.
To understand $\Sigts_0$, we study $\Sigts_s$ as $s \searrow 0$.
\begin{figure}[t]
  \centering
  \includegraphics[width = 0.8\linewidth]{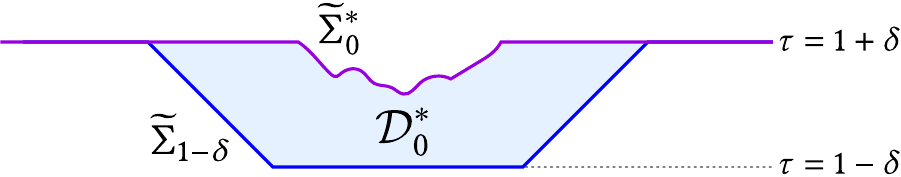}
  \caption{%
    The contour $\Sigts_0$ contains a portion of the future boundary of the MGHD (the irregular middle section above).
    We aim to show smoothness in $(\tau, u)$ in the region $\calDs_0$ between $\Sigt_{1 - \delta}$ and $\Sigts_0$.
  }
  \label{fig:near-shock}
\end{figure}

We now begin the bootstrap argument.
Using Proposition~\ref{prop:bs1}, we are free to assume that
\begin{equation*}
  \norms{\Php}_{\m{C}_{\tau,u}^k(\Sigt_{1 - \delta})} \leq \neweps
\end{equation*}
for some $\zeta \ll 1$ explicitly related to $\eps$.
Let $\calDs_s$ be the region above $\Sigt_{1 - \delta}$ and below $\Sigts_s$; for $\calDs_0$, see Figure~\ref{fig:near-shock}.
Let $s_0 > 0$ be the smallest value for which $\calDs_{s_0}$ is globally hyperbolic with Cauchy hypersurface $\Sigt_{1 - \delta}$ and
\begin{equation}
  \label{eq:near-boot}
  \norms{\Php}_{\m{C}_{\tau,u}^k(\calDs_{s_0})} \leq \neweps^{3/4}.
\end{equation}
Recalling \eqref{eq:width}, we let $\m{W} \coloneqq \m{W}(\calDs_{s_0}, \Sigt_{1 - \delta})$ with the width relative to the vector fields $L$ and $\mu L_{(I')}$.

We will improve on the bootstrap assumption \eqref{eq:near-boot} to conclude that it holds up to $\Sigts_0$.
Before doing so, we control the behavior of $\mus$ and $\muts$.
Let $\gamma_{(I)}^- (p)$ denote the past-directed integral curve of $L_{(I)}$ originating at $p$ and ending at $\Sigt_{1 - \delta}$.
\begin{lemma}
  \label{lem:musb}
  If $\zeta$ is sufficiently small, then for all $p \in \calDs_{s_0},$ $L \mu \leq -1/2$ and
  \begin{equation*}
    \mus (p) = \inf_{\gamma_{(1)}^- (p) \cup \gamma_{(N)}^- (p)} \mu.
  \end{equation*}
\end{lemma}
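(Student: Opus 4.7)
The lemma has two claims: first, the derivative bound $L\mu \leq -1/2$ on $\calDs_{s_0}$, and second, the identity $\mus(p) = \inf_{\gamma_{(1)}^-(p) \cup \gamma_{(N)}^-(p)} \mu$. The plan is to establish the derivative bound by comparison with the background simple wave and then exploit it geometrically by flowing along $L = \partial_\tau$ to reduce the infimum to one on the extremal past characteristics.

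For the derivative bound, recall from \eqref{eq:Lmu} that
\begin{equation*}
  L\mu = \Lambda_S R_{I_0}^S \Phs + \mu \Lambda_S R_{J'}^S \phi^{J'}.
\end{equation*}
Proposition~\ref{prop:sw} gives $\partial_\tau^2 \muT = 0$, so $L\muT$ is independent of $\tau$. The normalization $\min L\muT|_{\tau=0} = -1$ together with Condition~\ref{cond:L1Th}(ii) pins $L\muT \leq -3/4$ on the $u$-window relevant for shock formation, while Condition~\ref{cond:L1Th}(i) keeps $\muT \geq \delta_2$ outside this window. For $s_0 \ll \delta_2$, membership in $\calDs_{s_0}$ forces the $u$-coordinates at which the bound is needed into the degenerate window. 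The bootstrap \eqref{eq:near-boot}, via Lemma~\ref{lem:PGpest} and Proposition~\ref{prop:schematiceqs}, controls the perturbative differences $\Phsp$, $\phi^{J'}$, $\Lambda_S - \LaT_S$, and $R_I^S - \RT_I^S$ by $O(\neweps^{3/4})$. Choosing $\zeta$ sufficiently small then yields $L\mu \leq L\muT + O(\neweps^{3/4}) \leq -1/2$ throughout $\calDs_{s_0}$.

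The second claim then follows by flowing along $L$. In $(\tau, u)$ coordinates, $L = \partial_\tau$ is vertical while $\mu^{-1}(\lambda_{(1)} - \lambda) < 0 < \mu^{-1}(\lambda_{(N)} - \lambda)$, so the past-directed curves $\gamma_{(1)}^-(p)$ and $\gamma_{(N)}^-(p)$ slope down-right and down-left from $p$, bounding $J^-(p) \cap \{\taut \geq 1 - \delta\}$ as a downward wedge with apex $p$. Fix $q$ in this wedge and follow $L$ forward; since $\partial_\tau \taut = 1 > 0$ the trajectory remains above $\Sigt_{1 - \delta}$, and by continuity it first leaves the wedge on $\gamma_{(1)}^-(p) \cup \gamma_{(N)}^-(p) \cup \{p\}$. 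Since $p$ itself belongs to both extremal characteristics, the exit point $q_{\mathrm{top}}$ lies on $\gamma_{(1)}^-(p) \cup \gamma_{(N)}^-(p)$. Transitivity $J^-(q') \subset J^-(p)$ for points $q'$ on this trajectory, combined with $\tau(q') \leq \tau(p) \leq 1 + \delta - s_0$, gives $q' \in \calDs_{s_0}$, so the first part of the lemma delivers $\partial_\tau \mu = L\mu \leq -1/2 < 0$ along the entire trajectory. Hence $\mu(q) \geq \mu(q_{\mathrm{top}}) \geq \inf_{\gamma_{(1)}^-(p) \cup \gamma_{(N)}^-(p)} \mu$. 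Taking the infimum over $q$ gives one direction, and the reverse is immediate from $\gamma_{(1)}^-(p) \cup \gamma_{(N)}^-(p) \subset J^-(p)$.

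The main obstacle is verifying $L\mu \leq -1/2$ throughout the (potentially wide) region $\calDs_{s_0}$, rather than merely at points that realize $\mus(p)$. Near shock formation the extremal characteristics tilt sharply in $(\tau, u)$ coordinates --- their $u$-speeds $\mu^{-1}(\lambda_{(I)} - \lambda)$ diverge as $\mu \to 0$ --- so causal pasts widen considerably in $u$ and could a priori force relevant points of $\calDs_{s_0}$ outside the nondegenerate window supplied by Condition~\ref{cond:L1Th}. The resolution should combine the explicit linear formula $\muT(\tau, u) = 1 + (L\muT)(u)\,\tau$ from Proposition~\ref{prop:sw} with the truncation built into $\muts$, showing that as $s_0 \searrow 0$ the hypersurface $\Sigts_{s_0}$ tightens around the degenerate window; the detailed bookkeeping of this confinement is the main technical step.
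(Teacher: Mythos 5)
Your overall strategy is the paper's: get $\partial_\tau\mu\leq-1/2$ on $\calDs_{s_0}$ by comparing with the background via Condition~\ref{cond:L1Th} and absorbing the perturbation through the bootstrap \eqref{eq:near-boot}, then use monotonicity of $\mu$ along the $L$-flow to push the infimum defining $\mus(p)$ out to the extremal past characteristics. Your flow/wedge argument for the second claim is correct and in fact more explicit than the paper's one-line remark. The problem is the step you yourself flag as open: the confinement of $\calDs_{s_0}$ (and of $J^-(p)\cap\{\taut\geq 1-\delta\}$) to the window $\abs{u}\leq\eta+\delta_1$ where Condition~\ref{cond:L1Th} gives $\partial_\tau\h\mu\leq-3/4$. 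Your proposed mechanism for it is wrong on two counts. First, $\calDs_{s_0}$ is the region \emph{below} $\Sigts_{s_0}$, i.e.\ the superlevel set $\{\muts\geq s_0\}$ above $\Sigt_{1-\delta}$, so membership in $\calDs_{s_0}$ does not force $\mu$ (or $\mus$) to be small, and the clause ``for $s_0\ll\delta_2$'' is not available anyway: $s_0$ is produced by the bootstrap, not chosen, and the lemma must hold for every value $s_0$ takes, including values comparable to $\delta$. Second, the suggested resolution that $\Sigts_{s_0}$ ``tightens around the degenerate window'' as $s_0\searrow 0$ is false as stated: outside the window, $\{\muts=s_0\}$ contains the entire horizontal line $\{\tau=1+\delta-s_0\}$, so no tightening in $u$ occurs.

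The correct confinement needs no bookkeeping and no analysis of characteristic tilting; it is built into the trapezoidal definition \eqref{eq:taut} of $\taut$. For $\abs{u}>\eta+\delta_1$, the surface $\Sigt_0$ sits at height $\tau=\delta_1\delta_2/(1+\lambda^*)=2\delta$, so $\Sigt_{1-\delta}$ sits at $\tau=1+\delta$ there. On the other hand, the truncation in $\muts$ forces $\tau\leq 1+\delta-s_0$ on $\calDs_{s_0}$, and since $\muts$ is nonincreasing along past-directed causal curves, the same height bound holds on $J^-(p)\cap\{\taut\geq 1-\delta\}$ for every $p\in\calDs_{s_0}$. Hence every point at which you need the estimate lies in the trapezoid $\{\taut\geq 1-\delta\}\cap\{\tau\leq 1+\delta\}$, which is contained in $\{\abs{u}\leq\eta+\delta_1\}$ — precisely where Condition~\ref{cond:L1Th} yields $\partial_\tau\h\mu\leq-3/4$, after which your bootstrap comparison closes the bound $\partial_\tau\mu\leq-1/2$ exactly as in the paper. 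The worry that causal pasts widen in $u$ as $\mu\to 0$ is moot: any such widening exits the trapezoid, where the estimate is never invoked. With this one observation substituted for your confinement paragraph, your argument becomes the paper's proof.
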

\begin{proof}
  Fix $p \in \calDs_{s_0}.$
  Condition~\ref{cond:L1Th} states that $\partial_\tau \h \mu \leq -3/4$ where $\abs{u} \leq \eta + \delta_1$, which includes the entire trapezoid $\{\taut \geq 1 - \delta\} \cap \{\tau \leq 1 + \delta\}$.
  Provided $\zeta \ll 1$, the bootstrap assumption \eqref{eq:near-boot} then ensures that $\partial_\tau \mu \leq -1/2$ in $J^-(p) \cap \{\taut \geq 1 - \delta\}$.
  In particular, $\mu$ is decreasing along the integral curves of $L$.
\end{proof}
In the following, let $\Lip_\Omega f$ denote the Lipschitz norm in the $(\tau,u)$ coordinate system of the function $f$ in the region $\Omega$.
\begin{lemma}
  \label{lem:LFlows}
  For $i \in \{1, 2\}$, let $\gamma_{(I')}^i(s) = \big(\tau^i(s), u^i(s)\big)$ be past-directed integral curves of $L_{(I')}$ in $\m{D}_{s_0}^*$ parametrized by $\dot{u}^i = \sgn(\lambda - \lambda_{(I')})$.
  Let $K \coloneqq \Lip_{\calDs_{s_0}} [\mu (\lambda_{(I')} - \lambda)^{-1}]$ and suppose $h \coloneqq \gamma_{(I')}^2(0) - \gamma_{(I')}^1(0)$ satisfies $\abs{h} \leq 1/10$.
  Then
  \begin{equation*}
    \abs{h} \e^{-\lambda^* \m{W} K} \leq \abss{\gamma_{(I')}^2 - \gamma_{(I')}^1} \leq \abs{h} \e^{\lambda^* \m{W} K}.
  \end{equation*}
  If in addition $u^1(0) = u^2(0)$, we have
  \begin{equation*}
    \abs{h} \e^{-\lambda^* \m{W} K} \leq \abss{\tau^2 - \tau^1} \leq \abs{h} \e^{\lambda^* \m{W} K}.
  \end{equation*}
\end{lemma}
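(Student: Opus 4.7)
The plan is a standard linear Gr\"onwall argument in the difference $Y(s) \coloneqq \gamma_{(I')}^2(s) - \gamma_{(I')}^1(s)$, facilitated by the observation that the $u$-components of the two curves evolve identically. First I would rewrite the vector field in eikonal coordinates via Lemma~\ref{lem:vec-field-change-of-coord} as $L_{(I')} = \partial_\tau + \mu^{-1}(\lambda_{(I')} - \lambda) \partial_u$. By strict hyperbolicity \ref{hyp:strict-hyp} (together with continuity in $\calDs_{s_0}$), the sign $\sigma \coloneqq \sgn(\lambda - \lambda_{(I')}) \in \{\pm 1\}$ is determined by the ordering of $I'$ versus $I_0$ and is thus a single constant across the whole region. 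With the prescribed past-directed reparametrization $\dot{u}^i = \sigma$, the curves then satisfy
\[
\dot{u}^i = \sigma, \qquad \dot{\tau}^i = \sigma F(\gamma^i), \quad\text{where } F \coloneqq \frac{\mu}{\lambda_{(I')} - \lambda}.
\]

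The key structural observation is that $\dot{u}^2 - \dot{u}^1 \equiv 0$, so $u^2(s) - u^1(s)$ is preserved along the flow and all dynamics of $Y$ lies in the $\tau$-component. The associated vector field $V(\tau,u) \coloneqq \bigl(\sigma F(\tau,u),\,\sigma\bigr)$ is Lipschitz with constant $K$, since $\sigma$ is constant and $F$ has Lipschitz constant $K$ by hypothesis. Standard linear Gr\"onwall theory applied to the flow of a Lipschitz vector field then yields
\[
|Y(0)| \e^{-K |s|} \le |Y(s)| \le |Y(0)| \e^{K |s|};
\]
the upper bound follows immediately from $|Y(s)| \le |Y(0)| + \int_0^{|s|} K |Y(r)| \d r$, and the lower bound from applying the same estimate to the reverse flow from parameter $s$ back to $0$.

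It remains to convert $|s|$ into $\lambda^* \m{W}$. The $\mu L_{(I')}$-parametrization $\bar{s}$ underlying the definition \eqref{eq:width} of $\m{W}$ satisfies $du/d\bar{s} = \lambda_{(I')} - \lambda$, while the present parametrization has $|du/ds| = 1$; this forces $|ds/d\bar{s}| = |\lambda_{(I')} - \lambda| \le \lambda^*$, whence $|s| \le \lambda^* \m{W}$ on $\calDs_{s_0}$. Substituting gives the first pair of inequalities. The ``in addition'' statement then follows at once: if $u^1(0) = u^2(0)$, the invariance of the $u$-difference forces $u^2(s) \equiv u^1(s)$ for all $s$, so $|Y(s)| = |\tau^2(s) - \tau^1(s)|$ and the same estimates transfer verbatim.

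The main technical nuisance I would anticipate is verifying that both curves remain inside $\calDs_{s_0}$ for their entire length, so that the uniform Lipschitz constant $K$ really controls both. This is exactly where the smallness hypothesis $|h| \le 1/10$ enters: combined with the upper bound $|Y(s)| \le |h| \e^{K \lambda^* \m{W}}$, a short continuity argument shows the second curve cannot exit the region before the first.
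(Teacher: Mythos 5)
Your proposal is correct and follows essentially the same route as the paper: since the $u$-components of the two curves have identical derivatives, the difference is driven only by the $\tau$-components, and a Gr\"onwall (log-derivative) estimate with the Lipschitz constant $K$ of $\mu(\lambda_{(I')}-\lambda)^{-1}$ gives two-sided exponential bounds, converted to $\lambda^*\calW$ by comparing the unit-$u$-speed parameter with the $\mu L_{(I')}$ parameter. The paper merely orders the steps differently (equal-$u$ case first, then the general case via $\log(\abss{\gamma^2-\gamma^1}^2/\abs{h}^2)$), so there is no substantive difference.
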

\begin{proof}
  This is essentially the statement that trajectories of Lipschitz vector fields only converge or diverge exponentially.
  First suppose $u^1(0) = u^2(0)$, so by our parametrization $u^1 = u^2$.
  Then we can write
  \begin{equation*}
    \dot{\tau}^i(s) = -\mu \abss{\lambda_{(I')} - \lambda}^{-1}\big|_{(\tau^i(s),u^1(s))}.
  \end{equation*}
  It follows that
  \begin{equation*}
    \Big|\der{}{s}\log\frac{\abss{\tau^2 - \tau^1}}{\abs{h}}\Big| \leq \Lip_{\calDs_{s_0}} [\mu (\lambda_{(I')} - \lambda)^{-1}] \eqqcolon K.
  \end{equation*}
  Integrating from $s = 0$, we see that
  \begin{equation*}
    \Big|\log\frac{\abss{\tau^2 - \tau^1}}{\abs{h}}\Big| \leq K s.
  \end{equation*}
  The result follows from $s \le \lambda^* \calW$.
  To treat the general case, we take the same approach but instead study $\log(\abss{\gamma_{I'}^2 - \gamma_{(I')}^1}^2/\abs{h}^2)$.
  We omit the repeated details.
\end{proof}
In the following, inequalities involving derivatives of $\mus$ hold almost everywhere.
\begin{lemma}
  \label{lem:musmutsprop}
  In $\calDs_{s_0}$, $\mus$ is Lipschitz in $(\tau,u)$, $L_{(I)} \mus \le 0$ a.e. for all $I \in [N]$, and
  \begin{equation}
    \label{eq:mus-Lip}
    L \mus \le -\frac 1 2 \exp\big(\!-\lambda^* \calW \Lip_{\calDs_{s_0}} [\mu (\lambda_{(I')} - \lambda)^{-1}]\big).
  \end{equation}
  In particular, $\mus$ strictly decreases along integral curves of $L$.
  The same hold with $\muts$ in place of $\mus$.
\end{lemma}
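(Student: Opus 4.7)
The plan is to leverage Lemma~\ref{lem:musb}, which identifies $\mus(p)$ with the infimum of $\mu$ over the two extremal past characteristics $\gamma^-_{(1)}(p)$ and $\gamma^-_{(N)}(p)$, together with Lemma~\ref{lem:LFlows}, which quantifies the deformation of these curves as $p$ varies. In the intermediate-speed setting at hand, both $I = 1$ and $I = N$ differ from $I_0$, so Lemma~\ref{lem:LFlows} applies to either choice.

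For Lipschitz regularity of $\mus$ in $(\tau, u)$, I take nearby $p_1, p_2 \in \calDs_{s_0}$, parameterize $\gamma^-_{(I)}(p_i)$ by $\dot u = \sgn(\lambda - \lambda_{(I)})$, and apply Lemma~\ref{lem:LFlows} with $h = p_1 - p_2$ to bound the Hausdorff distance between the two curves by $|h|\,e^{\lambda^* \calW K}$. Combined with the uniform Lipschitz bound on $\mu$ implicit in the bootstrap~\eqref{eq:near-boot} and the Lipschitz character of $\Sigt_{1-\delta}$ (which governs where the curves terminate), the two infima differ by $O(|h|)$. For the sign claim, each $L_{(I)}$ has slope $\lambda_{(I)} \in [\lambda_{(1)}, \lambda_{(N)}]$, so forward $L_{(I)}$-flow is causal: concatenating a causal curve to $p$ with the $L_{(I)}$-segment from $p$ to $\exp(sL_{(I)})p$ shows $J^-(\exp(sL_{(I)})p) \supset J^-(p)$, and so $\mus$ is nonincreasing along $L_{(I)}$. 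Rademacher then yields $L_{(I)} \mus \le 0$ a.e.

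The heart of the argument is the quantitative bound on $L \mus$. Since $L = \partial_\tau$ in $(\tau, u)$ coordinates, $p(s) := \exp(sL)p$ satisfies $u(p(s)) = u(p)$ and $\tau(p(s)) = \tau(p) + s$. Fix $I \in \{1, N\}$ and parameterize $\gamma^1 := \gamma^-_{(I)}(p)$ and $\gamma^2 := \gamma^-_{(I)}(p(s))$ as in Lemma~\ref{lem:LFlows}. Because $u^1(0) = u^2(0)$ and $h = (s, 0)$, that lemma gives $|\tau^2(s') - \tau^1(s')| \ge s\, e^{-\lambda^* \calW K}$ pointwise in the parameter $s'$; continuity and $\tau^2(0) - \tau^1(0) = s > 0$ promote this to $\tau^2 \ge \tau^1 + s\,e^{-\lambda^* \calW K}$. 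The two curves synchronize in $u$, so for each $q_1 \in \gamma^1$ the point $q_2 \in \gamma^2$ with $u(q_2) = u(q_1)$ sits directly above $q_1$ along $L$ by at least $s\,e^{-\lambda^* \calW K}$. The vertical segment joining them lies in $\calDs_{s_0}$---a consequence of the already-established $L_{(I)}\mus \le 0$, which renders $\calDs_{s_0}$ vertically convex at fixed $u$---so integrating $L\mu \le -\tfrac{1}{2}$ from Lemma~\ref{lem:musb} yields
\[
\mu(q_2) \le \mu(q_1) - \tfrac{1}{2} s\,e^{-\lambda^* \calW K}.
\]
Taking an infimum over $q_1 \in \gamma^1$, then a minimum over $I \in \{1, N\}$, and invoking Lemma~\ref{lem:musb} at both $p$ and $p(s)$, I conclude $\mus(p(s)) \le \mus(p) - \tfrac{1}{2} s\,e^{-\lambda^* \calW K}$; dividing by $s$ and letting $s \to 0^+$ gives \eqref{eq:mus-Lip}. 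For $\muts = \min\{\mus, 1 + \delta - \tau\}$, Lipschitz regularity is automatic, $L_{(I)}(1 + \delta - \tau) = -1 \le 0$ preserves the sign bound, and $-1 \le -\tfrac{1}{2} e^{-\lambda^* \calW K}$ preserves the quantitative $L$ bound.

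The main obstacle is the third step: orchestrating the parameterizations so that Lemma~\ref{lem:LFlows} delivers a \emph{monotone} $\tau$-shift between $\gamma^1$ and $\gamma^2$ (rather than only control of $|\tau^2 - \tau^1|$), and verifying that the connecting $L$-segments stay inside $\calDs_{s_0}$ so $L\mu \le -\tfrac{1}{2}$ genuinely applies along them. Both difficulties are resolved by the continuity argument pinning down the sign of $\tau^2 - \tau^1$ and by the vertical convexity of $\calDs_{s_0}$ coming from the sign claim proved just before.
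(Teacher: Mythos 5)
Your proposal is correct and follows essentially the same route as the paper: Lipschitz regularity from Lemma~\ref{lem:musb} plus the non-divergence of nearby characteristics in Lemma~\ref{lem:LFlows}, the sign claim from monotonicity of causal pasts, and the quantitative bound \eqref{eq:mus-Lip} by comparing the past extremal characteristics through $p$ and its $\tau$-shift, using Lemma~\ref{lem:LFlows} to guarantee an upward shift of at least $h\e^{-\lambda^*\calW K}$ and integrating $L\mu \le -\tfrac12$ along the connecting vertical segments. The only cosmetic difference is that the paper shifts the single point $q$ where $\mus(p)$ is attained, whereas you shift every point of the characteristic and take an infimum, which changes nothing essential.
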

\begin{proof}
  To see that $\mus$ is Lipschitz, we observe from Lemma~\ref{lem:musb} that we need only study the past extremal characteristics emanating from $p$ to evaluate $\mus(p)$.
  Lemma~\ref{lem:LFlows} ensures that nearby characteristics do not diverge too quickly, so $\mus$ inherits the Lipschitz property from $\mu$.

  Next, we note that $J^- (p) \subset J^- (q)$ whenever $q$ lies along a future directed causal curve starting at $p$.
  It follows that $\mus$ is nonincreasing along such curves, which include characteristics.
  So $L_{(I)} \mus \leq 0$.

  Now let $\sigma_h$ denote the shift operation in $\tau$ by $h$, so $\sigma_h(\tau, u) = (\tau + h, u)$.
  By Lemma~\ref{lem:musb}, $\mus(p)$ is attained at some point $q$ along a past extremal characteristic through $p$.
  Given $h > 0$, Lemma~\ref{lem:LFlows} ensures that the corresponding characteristic through $p' \coloneqq \sigma_h p$ shifts up in $\tau$ by at least $h' \coloneqq h \e^{-\lambda^* \m{W} K}$.
  Hence $q' \coloneqq \sigma_{h'}q$ lies in the causal past of $p'$.
  Now, Lemma~\ref{lem:musb} states that $\partial_\tau \mu \leq -1/2$, so
  \begin{equation*}
    \mus(\sigma_h p) = \mus(p') \leq \mu(q') \leq \mu(q) - \frac{h'}{2} = \mus(p) - \frac{h'}{2}.
  \end{equation*}
  Since this holds for all $h > 0$ sufficiently small, we conclude that
  \begin{equation*}
    L \mus = \partial_\tau \mus \leq -\frac{h'}{2h} = - \frac{1}{2} \e^{-\lambda^* \m{W} K}.
  \end{equation*}
\end{proof}
These results imply the following.
\begin{corollary}
  If the bootstrap \eqref{eq:near-boot} holds in $\calDs_s$, then this region is globally hyperbolic with Cauchy hypersurface $\Sigt_{1 - \delta}$.
\end{corollary}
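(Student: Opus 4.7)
The plan is to verify the standard criteria for global hyperbolicity of $\calDs_s$ with candidate Cauchy hypersurface $\Sigt_{1-\delta}$: (i) $\calDs_s$ contains no closed causal curves, (ii) $\Sigt_{1-\delta}$ is achronal, and (iii) every inextendible causal curve in $\calDs_s$ meets $\Sigt_{1-\delta}$ exactly once. Throughout, a causal curve is one whose tangent lies in the closed cone spanned by $\pm L_{(1)}, \dots, \pm L_{(N)}$, in line with Appendix~\ref{sec:appendix1}. Part (i) is immediate, as $\tau$ is strictly monotone along every (non-degenerate) causal curve.

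For (ii), I would check that $\Sigt_{1-\delta}$ is everywhere spacelike with respect to $\psi$. On the flat portions where $|u|\le\eta$ or $|u|\ge\eta+\delta_1$, the tangent points purely in the $u$-direction and trivially lies outside the characteristic cone at any point with $\mu>0$. On the slanted portions where $\eta<|u|<\eta+\delta_1$, Condition~\ref{cond:L1Th} gives $\muT\ge\delta_2$, and the bootstrap \eqref{eq:near-boot} with $\neweps\ll 1$ upgrades this to $\mu\ge\delta_2/2$. In $(\tau,u)$ coordinates the tangent to $\Sigt_{1-\delta}$ has slope $(1+\lambda^*)/\delta_2$, while each characteristic direction $L_{(I)}$ has slope $\mu^{-1}(\lambda_{(I)}-\lambda)$ bounded in absolute value by $2\lambda^*/\delta_2<(1+\lambda^*)/\delta_2$. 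Thus $\Sigt_{1-\delta}$ is uniformly spacelike with margin depending only on $\delta_2$ and $\lambda^*$, so it is achronal and any causal curve intersects it transversally, hence at most once.

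The core of the proof is (iii): I would show that past-directed inextendible causal curves from any $p\in\calDs_s$ exit through $\Sigt_{1-\delta}$, not through the upper boundary $\Sigts_s$ nor through $\{\tau=1+\delta\}$. This follows directly from Lemma~\ref{lem:musmutsprop}. Any future-directed causal tangent is a nonnegative linear combination of $L_{(1)},\ldots,L_{(N)}$, and Lemma~\ref{lem:musmutsprop} asserts $L_{(I)}\muts\le 0$ for every $I$, so $\muts$ is nonincreasing along future-directed causal curves. Reversing orientation, $\muts$ is nondecreasing along a past-directed causal curve $\gamma$ starting at $p$; since $\muts(p)\ge s$, we keep $\muts\ge s$ along $\gamma$, ruling out exit through $\Sigts_s=\{\muts=s\}$. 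The same past-directed curve has strictly decreasing $\tau$, ruling out exit through $\{\tau=1+\delta\}$. Since $\calDs_s$ is bounded in $\tau$ from below and $\Sigt_{1-\delta}$ is Lipschitz, $\gamma$ must reach $\Sigt_{1-\delta}$ in finite parameter time.

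The main (mild) obstacle is purely bookkeeping: one must verify that the spacelike margin of $\Sigt_{1-\delta}$ is preserved under the perturbation, which reduces to the elementary $\mu\ge\delta_2/2$ bound via the bootstrap assumption. With that in hand, the inequalities $L_{(I)}\muts\le 0$ from Lemma~\ref{lem:musmutsprop} do all the geometric work and (i)--(iii) follow.
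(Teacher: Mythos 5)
Your proof is correct and follows essentially the same route as the paper: the whole argument rests on Lemma~\ref{lem:musmutsprop} (monotonicity of $\muts$ along future-directed causal curves, with $\partial_\tau\muts$ strictly negative) together with the uniform spacelikeness of $\Sigt_{1-\delta}$, which is exactly how the paper concludes that the superlevel set $\{\muts \ge s\}$ intersected with the causal future of $\Sigt_{1-\delta}$ is globally hyperbolic with that Cauchy hypersurface; you merely spell out the standard criteria in more detail. One small quantitative slip: on the slanted portion your comparison $2\lambda^*/\delta_2 < (1+\lambda^*)/\delta_2$ requires $\lambda^* < 1$, which is not given. This is harmless: the bootstrap gives $\mu \ge \delta_2 - \m{O}(\neweps^{3/4})$ (not merely $\delta_2/2$) and $|\lambda_{(I)} - \lambda| \le \lambda^* + \m{O}(\neweps^{3/4})$ by Lemma~\ref{lem:PGpest}, so the characteristic slope in $(\tau,u)$ is at most $\lambda^*/\delta_2 + \m{o}(1) < (1+\lambda^*)/\delta_2$, and the spacelike margin survives the perturbation for $\neweps$ small.
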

\begin{proof}
  By Lemma~\ref{lem:musmutsprop}, $\muts$ has non-vanishing gradient and is nonincreasing along every future-directed causal curve.
  Thus if we intersect its superlevel set $\{\muts \geq s\}$ with the causal future $\{\taut \geq 1 - \delta\}$ of a spacelike curve, the result $\calDs_s$ is globally hyperbolic with Cauchy hypersurface $\{\taut = 1 - \delta\} = \Sigt_{1-\delta}$.
\end{proof}
Next, we show that the width $\calW = \calW(\calDs_{s_0},\Sigt_{1 - \delta})$ is rather small:
\begin{lemma}
  \label{lem:widthest}
  For $\neweps$ sufficiently small, we have $\calW \ls \eta + \delta_1.$
\end{lemma}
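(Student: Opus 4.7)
The plan is to bound separately the integration times along the two families of vector fields $L$ and $\mu L_{(I')}$ that generate the relevant flows in $\calDs_{s_0}$, and to exploit the fact that the entire region $\calDs_{s_0}$ sits inside a narrow strip in the $u$ variable.

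\textbf{Step 1: Confinement in $u$.} First I would show that $\calDs_{s_0}\subset\{|u|\le \eta+\delta_1\}$. Suppose $p\in\calDs_{s_0}$ with $|u(p)|>\eta+\delta_1$. From \eqref{eq:taut}, the definition of $\Sigt_0$, and $\partial_\tau\taut=1$, the condition $\taut(p)\ge 1-\delta$ forces $\tau(p)\ge 1-\delta+\delta_1\delta_2/(1+\lambda^*)=1+\delta$ (using $\delta=\delta_1\delta_2/[2(1+\lambda^*)]$). On the other hand, $\muts(p)\ge s_0>0$ requires $1+\delta-\tau(p)\ge s_0$, a contradiction. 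So $\calDs_{s_0}$ is contained in the vertical strip of width $2(\eta+\delta_1)$.

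\textbf{Step 2: Width along $L$.} Since $L=\partial_\tau$ in $(\tau,u)$-coordinates, an affinely parametrized integral curve of $L$ only changes the $\tau$-coordinate. For $p\in\calDs_{s_0}$, the curve $\gamma_{1,p}$ begins at the unique point on $\Sigt_{1-\delta}$ with $u=u(p)$ and ends at $p$. Its parameter length is $\tau(p)-\tau_{\mathrm{start}}(u(p))$, where $\tau_{\mathrm{start}}$ is read off \eqref{eq:taut} shifted by $1-\delta$. By Step 1, $|u(p)|\le\eta+\delta_1$, and in that range $\tau_{\mathrm{start}}\ge 1-\delta$, so the length is bounded by $(1+\delta)-(1-\delta)=2\delta\lesssim\delta_1$.

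\textbf{Step 3: Width along $\mu L_{(I')}$.} The integral curves of $V_{(I')}=\mu L_{(I')}$ in $(\tau,u)$-coordinates satisfy $\dot\tau=\mu$ and $\dot u=\lambda_{(I')}-\lambda$. By strict hyperbolicity \ref{hyp:strict-hyp}, the background value $|\hat\lambda_{(I')}-\hat\lambda|$ is bounded below by some $c>0$; the bootstrap \eqref{eq:near-boot} and Lemma~\ref{lem:PGpest} ensure $|\lambda_{(I')}-\lambda|\ge c/2$ throughout $\calDs_{s_0}$ once $\neweps$ is small. Because the relevant integral curve stays in $\calDs_{s_0}$ until it reaches $\Sigt_{1-\delta}$, Step 1 bounds the total change in $u$ by $2(\eta+\delta_1)$. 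Integrating $|\dot u|\ge c/2$ yields parameter length at most $4(\eta+\delta_1)/c\lesssim\eta+\delta_1$.

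\textbf{Conclusion.} Combining Steps 2 and 3 and taking the maximum over the two classes of vector fields, we obtain $\calW=\calW(\calDs_{s_0},\Sigt_{1-\delta})\lesssim\eta+\delta_1$. The only delicate point is Step 1, where the specific geometry of the trapezoid $\Sigt_0$ and the choice $\delta=\delta_1\delta_2/[2(1+\lambda^*)]$ are precisely what make the future cap $\{\tau=1+\delta\}$ meet $\Sigt_{1-\delta}$ exactly at $|u|=\eta+\delta_1$, pinching the region shut.
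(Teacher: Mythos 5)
Your proposal is correct and takes essentially the same approach as the paper: confine $\calDs_{s_0}$ to $\abs{u}\le\eta+\delta_1$ and $\tau$-extent $2\delta\lesssim\delta_1$ using \eqref{eq:taut} and the truncation at $\tau=1+\delta$, then bound the parameter times via $L\tau=1$ for the $L$-curves and via the eigenvalue gap (uniform by strict hyperbolicity on the compact background range, perturbed by $\m{O}(\neweps^{3/4})$ through Lemma~\ref{lem:PGpest} and the bootstrap) for the $\mu L_{(I')}$-curves. Your Step~1 simply spells out the $u$-confinement that the paper asserts directly from the definitions, so no substantive difference.
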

\begin{proof}
  The $\tau$ extent of $\calDs_{s_0}$ is at most $2\delta = \delta_1\delta_2/(1 + \lambda^*) < \delta_1$.
  Because $L \tau = 1$, it follows that any integral curve of $L$ starting in $\calDs_{s_0}$ will reach $\Sigt_{1 - \delta}$ within parameter time $\delta_1$.
  By \eqref{eq:taut} and the definition of $\calDs_{s_0}$, its $u$ extent it at most $2(\eta + \delta_1)$.
  Using Lemma~\ref{lem:PGpest} and the bootstrap assumption, it follows that any integral curve of $\mu L_{(I')}$ starting in $\calDs_{s_0}$ will reach $\Sigt_{1 - \delta}$ within parameter time
  \begin{equation*}
    2(\eta + \delta_1) \max_{(I')} \sup_{\calDs_{s_0}} \abss{\hat{\lambda}_{(I')} - \hat{\lambda}}^{-1} + C \neweps^{3 / 4}.
  \end{equation*}
  The conclusion follows from \eqref{eq:width}.
\end{proof}
We can now improve on the bootstrap \eqref{eq:near-boot}.
\begin{proposition}
  \label{prop:r2bsrec}
  For $\eta, \delta_1$ and $\neweps$ sufficiently small, we have
  \begin{equation*}
     \norms{\Php}_{\m{C}_{\tau,u}^k(\calDs_{s_0})} \lesssim \neweps.
  \end{equation*}
\end{proposition}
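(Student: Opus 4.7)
The plan is to improve the bootstrap \eqref{eq:near-boot} by inducting on $\abs{\al} \le k$ and, at each order, estimating the derivatives $\Gamma^\al$ of the quantities in $\Php$ in the order $\Phns^{I'}$, $\Phsp$, $\psip$, $\mup$ dictated by the right-hand sides in Proposition~\ref{prop:schematiceqs}. Once $\Gamma^\al \Phns^{I'}$ is controlled, the remaining three unknowns evolve via equations of the form $\partial_\tau X = (\text{already controlled}) + \calN_{\abs{\al} - 1} + \calQ_{\abs{\al}}$, and direct integration in $\tau$ over an interval of length $\lesssim \delta$ completes the step.

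The heart of the argument is the estimate for $\Gamma^\al \Phns^{I'}$, which I would obtain from Proposition~\ref{prop:r2est} applied to the vector fields $V_{(I')} = \mu L_{(I')}$. Lemma~\ref{lem:widthest} gives $\calW \lesssim \eta + \delta_1$, and the coefficient $\XiT_{J'}^{I'}$ depends only on $\Theta$ and is uniformly $O(1)$. Hence the parameter $\kappa$ of Proposition~\ref{prop:r2est} is at most $C(\Theta, A)(\eta + \delta_1)$, which lies below $1$ once $\eta$ and $\delta_1$ are small. The inhomogeneity $\Pi^{I'} = \calN_{\abs{\al} - 1} + \calQ_{\abs{\al}}$ is controlled by the inductive hypothesis together with the bootstrap: the lower-order terms in $\calN_{\abs{\al} - 1}$ are $\lesssim \neweps$, while each factor in $\calQ_{\abs{\al}}$ is at most $\neweps^{3/4}$, giving a quadratic remainder $\lesssim \neweps^{3/2}$. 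Integrating along characteristics of length $\lesssim \calW$ bounds the Proposition~\ref{prop:r2est} parameter by $\beta D_0 \lesssim (\eta + \delta_1)\neweps$, while $D_0 \lesssim \neweps$ from the $\m{C}^k$ bound on $\Sigt_{1 - \delta}$ supplied by Proposition~\ref{prop:bs1}. Proposition~\ref{prop:r2est} then yields $\norm{\Gamma^\al \Phns^{I'}}_{L^\infty(\calDs_{s_0})} \lesssim \neweps$.

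With $\Gamma^\al \Phns^{I'}$ in hand, I would integrate the remaining three equations of Proposition~\ref{prop:schematiceqs} directly in $\tau$, starting from $\Sigt_{1 - \delta}$. The $\tau$-extent of $\calDs_{s_0}$ is $\lesssim \delta$, so each right-hand side contributes only a factor of order $\delta$ above the initial data. Combined with the bound on $\Gamma^\al \Phns^{I'}$ just obtained and the inductive control of $\calN_{\abs{\al} - 1}$ and $\calQ_{\abs{\al}}$, this yields $\norm{\Gamma^\al \Phsp}_{L^\infty}, \norm{\Gamma^\al \psip}_{L^\infty}, \norm{\Gamma^\al \mup}_{L^\infty} \lesssim \neweps$, closing the inductive step. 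Summing over $\abs{\al} \le k$ gives the desired $\m{C}^k$ bound.

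The main obstacle is the bad linear term $\XiT_{J'}^{I'} \Phns^{J'}$ flagged in Section~\ref{sec:badlin}: having no preferred sign, it precludes a one-sided Gr\"onwall estimate in $u$ alone, and the factor $\mu$ in front of $\partial_\tau$ prevents a direct Gr\"onwall in $\tau$ because $\mu \to 0$ at the shock. Condition~\ref{cond:L1Th} rescues the argument by confining the degeneracy of $\mu$ to a narrow region, and Lemma~\ref{lem:widthest} converts that narrowness into the smallness $\calW \lesssim \eta + \delta_1$ of the maximum integration length. This is precisely what produces $\kappa < 1$ in Proposition~\ref{prop:r2est}; without it the argument would not close. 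A secondary subtlety is that $\calQ_{\abs{\al}}$ can contain products where a factor is top-order in $\abs{\al}$, but the bootstrap $\norms{\Php}_{\m{C}^k(\calDs_{s_0})} \le \neweps^{3/4}$ controls every factor, so the combined quadratic contribution is $\lesssim \neweps^{3/2}$, negligibly small compared to the target $\neweps$.
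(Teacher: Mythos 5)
Your proposal is correct and follows essentially the same route as the paper: induction on the order of differentiation, applying Proposition~\ref{prop:r2est} to the commuted $\phi^{I'}$ equations with the $L^1$ smallness $\kappa \lesssim \eta + \delta_1$ supplied by Lemma~\ref{lem:widthest}, and then integrating the remaining equations of Proposition~\ref{prop:schematiceqs} in $\tau$ to recover $\Phsp$, $\psip$, and $\mup$. The only difference is that you spell out the bounds on $\kappa$, $D_0$, and $\beta D_0$ that the paper's terse proof leaves implicit.
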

\begin{proof}
  We induct on regularity.
  Suppose we have shown
  \begin{equation}
    \label{eq:inductive-hyp}
    \norms{\Php}_{\m{C}_{\tau,u}^{\ell - 1}(\calDs_{s_0})} \lesssim \neweps
  \end{equation}
  for some $0 \leq \ell \leq k$, with the convention that \eqref{eq:inductive-hyp} is vacuous when $\ell = 0$.
  Take $\al$ such that $\abs{\al} = \ell$.
  Drawing on Proposition~\ref{prop:schematiceqs}, we wish to apply Proposition~\ref{prop:r2est} to the equations for $\Gamma^\al \phi^{I'}$.
  By Lemma~\ref{lem:widthest}, this is possible provided $\eta, \delta_1 \ll 1$, for then the $L^1$ quantity $\kappa$ is small.
  Using \eqref{eq:near-boot}, we conclude that $\abss{\Gamma^\al \phi^{I'}} \lesssim \neweps$.
  Varying $\al$,
  \begin{equation*}
    \norms{\phi^{I'}}_{\m{C}_{\tau,u}^\ell(\calDs_{s_0})} \lesssim \neweps.
  \end{equation*}
  Integrating the remaining equations in Proposition~\ref{prop:schematiceqs} in $\tau$, we successively conclude the same for $\Phsp$, $\psip$, and $\mup$.
  That is, $\norms{\Php}_{\m{C}_{\tau,u}^\ell(\calDs_{s_0})} \lesssim \neweps.$
  The proposition follows from induction.
\end{proof}
Having improved on the bootstrap, we conclude by continuity that it holds on $\m{D}_0^*$, that is, up to $\Sigts_0$.
This completes the proof that the state variables remain as smooth as the data as $\muts \rightarrow 0$.
It also shows that the perturbative unknowns remain of size $\eps$ throughout the course of evolution.

\subsubsection{Choice of parameters}
We have introduced a number of parameters in the above argument, constrained in various ways.
We now describe how these can be chosen to satisfy the necessary bounds.

We first choose a simple wave satisfying Condition~\ref{cond:L1Th} with $\eta$ sufficiently small that we can use Proposition~\ref{prop:r2est} in the proof of Proposition~\ref{prop:r2bsrec}.
The same proof requires $\delta_1 \ll 1$, which can be arranged automatically.
We likewise obtain $\delta_2 \in (0, 1/10]$.
These parameters determine $\delta \coloneqq \delta_1 \delta_2/[2(1 + \lambda^*)]$.
We then take $\neweps$ sufficiently small that the proof of Proposition~\ref{prop:r2bsrec} carries through.
Recalling Proposition~\ref{prop:bs1}, we finally take $\eps < \eps_1$ such that $M_2 \eps \exp[M_1 (1 + z^{-1} + \delta_2^{-1})] \leq \neweps$.

\subsection{Eikonal regularity for extremal shocks}
We first note that outside of a compact region of spacetime, the data are trivial, meaning that we have the desired estimates by finite speed of propagation.
With this in mind, we immediately begin a bootstrap argument.
We define variants of $\mus$ and $\muts$ from \eqref{eq:mus} and \eqref{eq:muts}, preserving the notation.
Let
\begin{align*}
  \mus (p) &\coloneqq \inf\big\{\mu(q) \colon q \in J^- (p) \cap \{ \tau \ge 0 \}\big\},\\
  \muts (p) &\coloneqq \min\{\mus(p), 1 + 1/10 - \tau (p)\}.
\end{align*}
Given $s > 0$, let $\calD_s$ be the region above $\{\tau = 0\}$ and below $\{\mus = s\}$.
Let $s_0$ be the largest value of $s$ for which
\begin{equation}
  \label{eq:extremal-boot}
  \norms{\Php}_{\m{C}_{\tau,u}^k(\calDs_{s_0})} \leq \eps^{3/4}.
\end{equation}
To improve on this bootstrap, we first control $\Phns^{I'}$ and then $\Phsp$, $\psip$, and $\mup$.
Because $I_0$ is extremal, $\mu L_{(I')} u = \lambda_{(I')} - \lambda$ has a consistent sign across all $I'$ and is uniformly bounded away from zero.
We can therefore apply Proposition~\ref{prop:r1est} to Proposition~\ref{prop:schematiceqs} with $V_{(I')} = \mu L_{(I')}$ and $v = u$.
Using the bootstrap \eqref{eq:extremal-boot}, we conclude that $\abss{\Gamma^\al \phi^{I'}} \lesssim \eps$ for all $\abs{\al} \leq k$.
That is, $\norms{\phi^{I'}}_{\m{C}_{\tau,u}^k(\calDs_{s_0})} \lesssim \eps.$
Integrating the remaining equations in $\tau$, we can successively show the same for $\Phsp$, $\psip$, and $\mup$.
Hence
\begin{equation}
  \label{eq:extremal-improved}
  \norms{\Php}_{\m{C}_{\tau,u}^k(\calDs_{s_0})} \lesssim \eps.
\end{equation}
This improves on the bootstrap \eqref{eq:extremal-boot} once $\eps$ is sufficiently small, so by continuity \eqref{eq:extremal-improved} holds in $\calD_0$.
This completes the proof of Theorem~\ref{thm:SF}~\ref{item:fund-smooth}.

\subsection{Proof of shock formation}

We can now integrate the equation for $\mu$ to show that a shock forms.
This argument has appeared several times in the literature (see, for example, \cite{LukSpe02}), so we only sketch it.

Because the background wave $\Theta$ first forms a shock at time $1$, Proposition~\ref{prop:sw} ensures that $\partial_\tau \h\mu = -1$ along its shocking characteristic.
We have just shown that $\mup$ is $\eps$-small in $\m{C}^1$, so in a neighborhood of the $\Theta$-shocking characteristic, $\partial_\tau \mu = -1 + \m{O}(\eps)$.
Integrating from $\mu|_{\tau = 0} = 1$, we see that $\mu \searrow 0$ at a time $t_* = 1 + \m{O}(\eps)$.
Moreover, $\inf \abss{(\partial_u \Theta)^{I_0}} > 0$ on a neighborhood of its shocking characteristic, while $\psip$ is $\eps$-small in $\m{C}^1$ in the $(\tau,u)$ coordinate system.
Hence $\inf \abss{(\partial_u \psi)^{I_0}} > 0$ where $\mu \searrow 0$.
Since $\partial_u \psi = \mu \partial_x \psi,$ we conclude that $\abss{(\partial_x \psi)^{I_0}} \nearrow \infty$ as $\mu \searrow 0$.
That is, a spatial derivative of the solution blows up.
Meanwhile, $\psi = \Theta + \m{O}(\eps)$ remains bounded, so this singularity is a shock.
This completes the proof of Theorem~\ref{thm:SF}~\ref{item:shock-time}.

\subsection{Higher regularity from $\m{C}^1$ smallness}
We performed the calculations above for arbitrary $k \in \N_0$.
However, it is only necessary to control the quantities in $\Php$ to prove shock formation.
These involve first derivatives of $\psi$, so we need only ensure that the initial perturbation $v$ in Theorem~\ref{thm:SF} is small in $\m{C}^1$.
Given such smallness, we can propagate higher regularity.
\begin{corollary}
  \label{cor:HigherEsts}
  In the setting of Theorem~\ref{thm:SF}, we have
  \begin{equation*}
    \norms{\Php}_{\m{C}^k (\calDs_0)} \le M \norm{v}_{\m{C}^{k+1}},
  \end{equation*}
  for a constant $M$ depending on $k,\Theta,A,\eta,\delta_1,\delta_2,\delta,$ and $\norms{\Php}_{\m{C}^{k - 1} (\calDs_0)}$.
\end{corollary}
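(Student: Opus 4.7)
I would proceed by induction on $k$. The base case $k = 0$ is already contained in the proof of Theorem~\ref{thm:SF}~\ref{item:fund-smooth}: the bootstraps \eqref{eq:near-boot} and \eqref{eq:extremal-boot} close using only the $\m{C}^1$-smallness of $v$, delivering $\|\Php\|_{\m{C}^0(\calDs_0)} \lesssim \|v\|_{\m{C}^1}$, and they simultaneously fix the geometric setup (the region $\calDs_0$, the Lipschitz function $\muts$, the width bound $\calW \lesssim \eta + \delta_1$ from Lemma~\ref{lem:widthest}), none of which depends on higher regularity. So assume $\|\Php\|_{\m{C}^{k-1}(\calDs_0)} \le M_{k-1} \|v\|_{\m{C}^k}$.

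For the inductive step, fix $\alpha$ with $|\alpha| = k$. From Proposition~\ref{prop:schematiceqs},
\[
\mu \partial_\tau (\Gamma^\alpha \phi^{I'}) + (\lambda_{(I')} - \lambda)\partial_u (\Gamma^\alpha \phi^{I'}) = \h{\Xi}_{J'}^{I'} \phi^{J'} + \calN_{k-1} + \calQ_k.
\]
The decisive observation is that this is, effectively, a \emph{linear} inhomogeneous system in the top-order unknowns $\Gamma^\alpha \phi^{I'}$. I split each product $(\Gamma^{\alpha'}\Php)(\Gamma^{\beta'}\Php)$ with $|\alpha'|+|\beta'| \le k$ appearing in $\calQ_k$ into (i) pieces in which one factor carries all $k$ derivatives and the other is a $\m{C}^0$ function of $\Php$ (pointwise bounded by Theorem~\ref{thm:SF}), which become linear-in-top-order coefficients, and (ii) strictly intermediate products $|\alpha'|, |\beta'| \le k-1$, which together with $\calN_{k-1}$ are absorbed into a source term bounded by the inductive hypothesis. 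The coefficient $\h{\Xi}_{J'}^{I'}$ depends only on $\Theta$.

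I would then invoke the hyperbolic estimates of Section~\ref{sec:HyperbolicEsts} exactly as in the proof of Theorem~\ref{thm:SF}. In the extremal case, Proposition~\ref{prop:r1est} with $v = u$ and $V_{(I')} = \mu L_{(I')}$ applies: $D_0 \lesssim \|v\|_{\m{C}^{k+1}}$, $D_1$ is controlled by $\Theta$ and $\|\Php\|_{\m{C}^0}$, $D_2$ is controlled by the inductive hypothesis, and $\op{osc} u \lesssim 1$. In the intermediate case, I split between $\calDt_\delta$ (Proposition~\ref{prop:r1est}) and the near-shock region $\calDs_0 \setminus \calDt_\delta$ (Proposition~\ref{prop:r2est}); the $L^1$-threshold $\kappa < 1$ is preserved because the smallness of $\calW$ is fixed at the base case and the only new coefficient entering $\calT$ at level $k$ is the linear-in-top-order piece of $\calQ_k$, whose magnitude is controlled by $\|\Php\|_{\m{C}^0}$. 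Once $\Gamma^\alpha \phi^{I'}$ is controlled, $\Gamma^\alpha \Phsp$, $\Gamma^\alpha \psip$, and $\Gamma^\alpha \mup$ follow by direct integration in $\tau$ of the remaining equations of Proposition~\ref{prop:schematiceqs}, each integration contributing only inductively controlled or already-bounded top-order quantities.

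The principal obstacle is the bookkeeping of the $\calQ_k$ terms in the intermediate case: one must confirm that their contribution to the coefficient matrix of Proposition~\ref{prop:r2est} does not spoil the strict inequality $\kappa < 1$ secured at $k = 0$. Since those contributions are pointwise bounded by $\|\Php\|_{\m{C}^0}$, which is $\eps$-small independently of $k$, and the width $\calW$ is fixed by the geometry, the threshold survives. Varying $\alpha$ over $|\alpha| = k$ and combining the resulting pointwise bounds yields the stated estimate, with $M$ depending polynomially on $\|\Php\|_{\m{C}^{k-1}(\calDs_0)}$ through the quadratic structure of $\calQ_k$ and otherwise only on $k$, $\Theta$, $A$, $\eta$, $\delta_1$, $\delta_2$, and $\delta$.
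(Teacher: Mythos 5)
Your proposal is correct and follows essentially the same route as the paper, which proves the corollary by exactly this induction: it notes that Proposition~\ref{prop:schematiceqs} shows the commuted equations retain the linear structure analyzed in Section~\ref{sec:HyperbolicEsts}, so the estimates of Propositions~\ref{prop:r1est} and \ref{prop:r2est} apply at each order with lower-order norms absorbed into the constant. Your additional bookkeeping of the $\m{Q}_k$ terms (top-order factor times an $\eps$-small $\m{C}^0$ factor versus intermediate products handled by the inductive hypothesis) is precisely the ``slight modification'' the paper leaves implicit.
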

\noindent
This can be compared with the third statement in Theorem~\ref{thm:SF}.
\begin{proof}
  The propagation of higher regularity estimates after closing a nonlinear problem has been carried through in several contexts, so we omit the details.
  The corollary follows inductively from a slight modification of calculations above.
  In particular, we note from Proposition~\ref{prop:schematiceqs} that the analysis of the linearized equations from Section~\ref{sec:HyperbolicEsts} carries through even after commutation.
\end{proof}

\subsection{Maximal globally hyperbolic development} \label{sec:MGHD}
We now show that $\calB_{1 + \delta} = \Sigts_0$ is Lipschitz.
In the $(\tau,u)$ plane, the level sets $\Sigts_s$ of $\muts$ are Lipschitz graphs over the $u$ axis converging to a Lipschitz $\Sigts_0$.
After all, $\muts$ is uniformly Lipschitz and by \eqref{eq:mus-Lip}, $\rd_\tau \muts$ is uniformly negative in a neighborhood of the nontrivial portion of $\calB_{1 + \delta}$.
Moreover, the fundamental unknowns extend smoothly to functions on $\Sigts_0$.
It follows that $x$ extends smoothly as a function of $(\tau,u)$ to $\Sigts_0$ as well.
By the definition \eqref{eq:taut} of $\tau$, the inextensible portions of $\calB_{1 + \delta}$ are confined to a small region with $\abs{\tau - 1e} \leq \delta$ and $\abs{u} \leq \eta + \delta_1$.
When we map this to the $(t, x)$ plane, the smoothness of $x$ implies that the image of the inextensible portions lie within a ball $B$ that can be made arbitrarily small by shrinking $\eta$, $\delta_1$, and $\delta$.

We now use Condition~\ref{cond:MGHDstab}.
Using the smallness of $B$, we are free to assume that $F|_B = 0$, as we can otherwise perform a constant shear to reduce to this case.
Then there exists $c > 0$ such that $\lambda_{(1)} \le -c$ and $\lambda_{(N)} \ge c$ in $B$.
For every $q \in \Sigts_s \cap B$, the truncated cone $\{\abs{x - x(q)}/c < t(q) - t\} \cap B$ lies in the causal past of $q$, and hence is disjoint from $\Sigts_s$.
It follows that $\Sigts_s$ is a Lipschitz graph with uniform constant for $0 < s < 1 / 10$.
Moreover, as $s \rightarrow 0$, $\Sigts_s$ converges to the image of $\Sigts_0$ in the $(\tau,u)$ plane under the mapping $(\tau,u) \rightarrow (t,x)$.
This image $\Sigts_0$ is itself a Lipschitz graph because the Lipschitz constant of $\Sigts_s$ in $(t, x)$ is uniformly controlled as $s \rightarrow 0$.

Next, we explicitly characterize the components of $\calB_{1 + \delta}$ in the $(\tau,u)$ plane.
Let
\begin{equation*}
  \m{B}_{1 + \delta}^{\text{pre}} = \Sigts_0 \cap \{ \mu = 0 \} \cap \{ \rd_u \mu = 0 \}.
\end{equation*}
This corresponds to points on $\Sigts_0$ where $\mu$ vanishes and where the vector fields $\mu L_{(I')} = \mu \partial_\tau + (\lambda_{(I')} - \lambda)\partial_u$ are tangent to $\{ \mu = 0 \}$.
We then set
\begin{equation*}
  \m{B}_{1 + \delta}^{\text{sing}} = \Sigts_0 \cap \{ \mu = 0 \} \cap \{ \rd_u \mu \ne 0 \}.
\end{equation*}
We claim this singular set is empty unless $\lambda$ is extremal.
To see this, suppose to the contrary that $\lambda$ is intermediate and there exists $p \in \m{B}_{1 + \delta}^{\text{sing}}$.
We may assume $\rd_u \mu(p) > 0$, as a symmetric argument treats the other sign.
Then by continuity and Lemma~\ref{lem:musb}, there exists $c > 0$ and a neighborhood $B$ of $p$ in which
\begin{equation}
  \label{eq:slopes}
  \mu L_{(N)} \mu = \mu \partial_\tau\mu + (\lambda_{(N)} - \lambda)\partial_u \mu \geq c \And L \mu \leq -c.
\end{equation}
Here we use the fact that $\lambda$ is intermediate to produce a faster eigenvalue $\lambda_{(N)}$.
In the following, we choose a parameter $r > 0$ sufficiently small that we remain within $B$.
Following an integral curve of $L$ backward from $p$ by parameter $r$, we arrive at a point $q \in \Sigts_s$ for some $s > 0$, and $\mu(q) \asymp r$.
If we now follow an integral curve of $\mu L_{(N)}$ backward from $q$, \eqref{eq:slopes} implies that $\mu$ will fall to zero within time $\asymp r$.
Since $\mu$ vanishes in the causal past of $q$, $\mu^*(q) = 0$, contradicting $q \in \Sigts_s$.
So indeed $\m{B}_{1 + \delta}^{\text{sing}}$ is empty if $\lambda$ is intermediate.
For generic singularities, one can check that these definitions of $\m{B}^{\text{pre}}$ and $\m{B}^{\text{sing}}$ coincide with the informal descriptions in Section~\ref{sec:SFstatements}.

Continuing, we set
\begin{equation*}
  \m{B}_{1 + \delta}^{\text{Cau}} = \Sigts_0 \cap \{ \mu \ne 0 \} \cap \{ p : J^{-1} (p) \cap \{ \mu = 0 \} \neq \emptyset\}.
\end{equation*}
This is the set of points where the solution is still regular because $\mu \ne 0$, but where a singularity occurs somewhere in the causal past.
Finally, we set
\begin{equation*}
  \m{B}_{1 + \delta}^{\text{ext}} = \Sigts_0 \setminus (\m{B}_{1 + \delta}^{\text{pre}} \cup \m{B}_{1 + \delta}^{\text{sing}} \cup \m{B}_{1 + \delta}^{\text{Cau}}).
\end{equation*}
These correspond to regular points without singularities in their causal past.
They are only included in $\Sigts_0$ because of our cutoff at $t = 1 + \delta$.
We can hyperbolically extend the solution in a neighborhood of such points.
For example, the extensible points in $\m{B}_{1 + \delta / 2}$ lie in the interior of $\m{M}_{1 + \delta}$.
Finally, Theorem~\ref{thm:SF} involves $\m{B}_{t_* + \delta}$ rather than $\m{B}_{1 + \delta}$.
Since $t_* = 1 + \m{O}(\eps)$, we can apply the above results to $\m{B}_{t_* + \delta}$ by halving $\delta$ and reducing $\eps$ to be sufficiently small relative to $\delta$.

\subsection{A perverse MGHD}
We now turn to Proposition~\ref{prop:Bcurvature}.
Consider the decoupled system $\psi = (v, w)$ with
\begin{equation*}
  \rd_t v + v \rd_x v = 0 \And \rd_t w - \rd_x w = 0,
\end{equation*}
consisting of the Burgers equation and linear transport.

We choose smooth data $0 \leq v(0, \anon) \leq 2$ such that $\rd_x v(0, \anon)$ attains its global minimum of $-1$ nondegenerately at $x = \frac{1}{n}$ for all $n \in \N$, and $v(0, \frac{1}{n}) = 1$.
The method of characteristics shows that $v$ is smooth for $t < 1$ and develops preshocks at time $1$ and positions $x_n = 1 + \frac{1}{n}$ and $1$.
Locally near $(1, x_n)$, $v$ can be smoothly extended below the integral curves of $\partial_t + v \partial_x$ and $\partial_t - \partial_x$.
Global hyperbolicity implies $\m{M}$ cannot extend above these curves, so in fact they form the future boundary $\m{B}$ of $\m{M}$ near $(1, x_n)$.
Thus the curvature of the boundary includes the point-mass $2\delta_{(1, x_n)}$ for each $n \in \N$.
Because the sequence $(1, x_n)_{n \in \N}$ has a limit point, the curvature of $\m{B}_{1 + \delta}$ is not a Radon measure no matter the value of $\delta > 0$.
This proves Proposition~\ref{prop:Bcurvature}.

\section{A homogeneous expansion}
\label{sec:homogeneous}

Thanks to Theorem~\ref{thm:SF}, we have propagated regularity in $(\tau,u)$ up to a nontrivial portion of the zero set of $\mu$.
In this section, we use this information to show that a generic preshock admits an expansion in certain homogeneous functions.
This expansion has a universal leading term, which plays a central role in our analysis of the inviscid limit.
We consider perturbations around simple waves that are nondegenerate in the following sense:
\begin{condition}
  \label{cond:genericity}
  A simple wave $\Theta$ is \emph{strongly nondegenerate} if $L \h\mu = \rd_u \h\lambda \hspace{-0.5pt}=\hspace{-0.5pt} \f {\rd \lambda} {\rd r_{I_0}} (\rd_u \Theta)^{I_0}$ has a unique minimum, and there $\partial_u^3 \h\lambda > 0$.
\end{condition}
In this setting, the maximal development in Theorem~\ref{thm:SF} has the generic form described in Section~\ref{sec:SFstatements}.
In particular, $\m{B}_{t_* + \delta}^{\text{pre}}$ consists of a single preshock; for the behavior of the Cauchy horizon and singular set, we direct the reader to Section~\ref{sec:SFstatements} as well as \cite{Christodoulou01}, \cite{AbbSpe}, \cite{AbbSpe23}, and \cite{ShkVic_2024}.
For the remainder of the paper, we set aside the full maximal development and instead focus on the spacetime slab $[0, T] \times \R$ between the initial data and the time of the preshock.
We will describe the solution $\psi$ in great detail in the vicinity of this preshock.
As our analysis is local, we could readily treat data with several simultaneous isolated preshocks, but we work around a single preshock for simplicity.

We show that $\psi$ admits an expansion in certain functions $\fu$ and $\fm$ that approximate $u$ and $\mu^{-1}$ near the preshock and scale in $t$ and $x$ in a natural way.
We defer their definitions to Section~\ref{sec:rhomexp}, and here state the result informally:
\begin{theorem}
  \label{thm:hom-informal}
  In the setting of Theorem~\ref{thm:SF}, assume in addition that $\Theta$ is strongly nondegenerate in the sense of Condition~\ref{cond:genericity}.
  Then for $\eps$ sufficiently small, $\psi$ admits an expansion near the preshock in $t, \fu$, and $\fm$.
\end{theorem}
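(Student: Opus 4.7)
The plan is to leverage the $(\tau,u)$-smoothness afforded by Theorem~\ref{thm:SF}\ref{item:fund-smooth} to Taylor expand the change of variables and bootstrap a homogeneous expansion, exactly as in \cite{CG23} for Burgers. Translate spacetime so that the unique preshock lies at the origin, and normalize so that $t_*=1$ and the preshock is at $u=0$. By Theorem~\ref{thm:SF} the fundamental unknowns $\mathcal P$ are smooth functions of $(\tau,u)$ in a full $(\tau,u)$-neighborhood of the preshock. In particular $\mu$ and $x$ (recall $\partial_\tau x=\lambda$, $\partial_u x=\mu$ from \eqref{eq:der-x}) are smooth there.

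The first step is to read off the leading Taylor behavior of $\mu$ and $x$ at the preshock. Strong nondegeneracy (Condition~\ref{cond:genericity}) together with the perturbative closeness to the simple wave (recall $\partial_\tau^2\h\mu=0$ and $\min L\h\mu=-1$ by Proposition~\ref{prop:sw}) gives at the preshock
\begin{equation*}
  \mu=0,\qquad \partial_\tau\mu=-1+\mathcal O(\eps),\qquad \partial_u\mu=0,\qquad \partial_u^2\mu>0,
\end{equation*}
so $\mu(\tau,u)=(1-\tau)+\tfrac12\kappa u^2+\mathcal R_\mu$ for some $\kappa>0$ and smooth remainder $\mathcal R_\mu$ vanishing to the appropriate order. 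Integrating $\partial_u x=\mu$ and $\partial_\tau x=\lambda$, and performing a Galilean transformation to kill the $\lambda(\text{preshock})\,t$ drift, one obtains
\begin{equation*}
  x(\tau,u)=(1-\tau)\,u+\tfrac{\kappa}{6}u^3+\mathcal R_x(\tau,u),
\end{equation*}
with $\mathcal R_x$ smooth and subdominant to $(1-\tau)u+u^3$ in a precise homogeneous sense. Thus $u$ approximately solves the cusp equation $x=-a t\, u+b\, u^3$ with $a=-1$, $b=\kappa/6$ (up to the sign convention of $\cub$), matching Theorem~\ref{thm:homexprough}.

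Next, define $\fu(t,x)$ to be the inverse cubic $\cub$ solving this leading equation and $\fm\coloneqq\partial_x \fu$'s inverse, i.e.\ the leading profile of $\mu$. Build the expansion recursively as in \cite{CG23}: writing $u=\fu+\mathrm{correction}$, plug into the exact relation $x=x(\tau,u)$ and use the implicit function theorem on the homogeneous scale generated by $\fu,\fm,t$ (the weights are determined by the cubic cusp $x\sim t\fu+\fu^3$). At each step, the smooth remainder $\mathcal R_x$ contributes a term of strictly higher homogeneous degree, which one inverts against the leading cubic using the strict positivity of $\partial_u(\text{cubic})$ away from the preshock point. This produces a full asymptotic expansion $u\sim\sum_{j\ge0} u_j$ with $u_0=\fu$ and each $u_j$ a homogeneous function of $(t,\fu,\fm)$ of increasing degree, up to a smooth $(\tau,u)$-error of arbitrary order controlled by $k$ in Theorem~\ref{thm:SF}\ref{item:fund-smooth}.

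Finally, because $\psi$ is smooth in $(\tau,u)=(t,u)$, composing its Taylor expansion in $u$ (centered at $u=0$) with the homogeneous expansion of $u$ in $(t,\fu,\fm)$ yields the claimed expansion of $\psi$. The corresponding expansion for $\mu$ is obtained analogously, or directly from $\mu=\partial_u x$ combined with the expansion for $u$.

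The main obstacle is the recursive step: organizing the correct anisotropic homogeneous scale adapted to the cusp $x=-at\fu+b\fu^3$ and verifying that each iterate produces a gain in homogeneous degree. This requires tracking how $t$, $\fu$, and $\fm$ scale against one another on the two sides of the preshock (where $\fm$ is bounded vs.\ where $\fm$ blows up), and showing that the smooth $(\tau,u)$-remainders from Theorem~\ref{thm:SF}\ref{item:fund-smooth} translate into bona fide higher-order terms in this scale. This is carried out in detail in \cite{CG23} for Burgers, and the systems case reduces to the Burgers case once the leading cubic behavior of $x$ in $u$ is established, since the dependence on the non-shocking components enters only through the smooth $(t,u)$ dependence of $\psi$.
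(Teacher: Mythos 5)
Your proposal follows essentially the same route as the paper: use the $(\tau,u)$-smoothness from Theorem~\ref{thm:SF}~\ref{item:fund-smooth} together with Condition~\ref{cond:genericity} to Taylor-expand $\mu$ and $x$ near the preshock, normalize by a Galilean (and shift) transformation, identify the cubic cusp $\fu$ from the leading relation $x \approx a_0\abs{t}u + b_0u^3$, recursively build a homogeneous expansion for $u$ as in \cite{CG23}, and compose with the eikonal smoothness of $\psi$ — this is exactly Propositions~\ref{prop:expansion1}--\ref{prop:eikonal-hom} and Theorem~\ref{thm:psi-hom-exp}. The only step the paper treats with more care than your sketch is the base case of the recursion, where the linearization about $\fu$ is singular, so $u = \fu + \m{O}(\fd^2)$ is derived from a priori bounds $\abs{u} \lesssim \abs{\fu} + \abs{t}$ and a mean-value argument rather than a direct implicit-function-theorem inversion.
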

\noindent
For a rigorous statement, see Theorem~\ref{thm:psi-hom-exp} below.

To show this result, we observe that Theorem~\ref{thm:SF}~\ref{item:fund-smooth} implies that $\psi$ and $\mu$ admit Taylor series in $(\tau, u)$ up to an order determined by the regularity of the data.
Because $\mu$ governs the change of variables from $(t, x)$ to $(\tau, u)$, we can thereby expand $x$ in $(\tau, u)$.
Recall that $T$ denotes the time of first shock.
Using Condition~\ref{cond:genericity}, we show that this expansion takes the form
\begin{equation}
  \label{eq:x-Taylor}
  x = a_0(\tau - T)(u - u_0) + b_0 (u - u_0)^3 + \ldots
\end{equation}
for constants $a_0, b_0$, and $u_0$.
The leading terms in \eqref{eq:x-Taylor} allow us to identify homogeneous approximations $\fu$ and $\fm$ of $u$ and $\mu^{-1}$.
We show that $u$ admits an expansion in $\tau$, $\fu$, and $\fm$, and this leads in turn to an expansion for $\psi$.

\subsection{Eikonal expansion for \texorpdfstring{$x$}{x}}

In \cite{CG23}, the second and third authors used the hodograph method to expand $x$ in $\tau$ and $u$ near a preshock in the Burgers equation.
This method is confined to scalar conservation laws, and thus applies to the simple wave $\Theta$ (which is effectively scalar) but not to the full solution $\psi$.
Nonetheless, we expect $\psi$ to exhibit the same behavior up to perturbations of order $\eps$.
To succinctly express our results, we introduce schematic notation for the relatively smooth errors encountered in the construction of this expansion.
\begin{notation}
  Given $k \in \N_0$, we let $E_k$ denote a $\m{C}^k$ function of $(\tau, u)$ that may change from line to line satisfying $\norm{E_k}_{\m{C}^k} \lesssim_{k,A,\Theta} 1$.
  Let $E_\infty$ denote a function of the form $E_k$ for all $k \in \N_0$.
\end{notation}
By Theorem~\ref{thm:SF}, the fundamental unknowns $\Ph$ admit Taylor expansions up to order $k$ in the $(\tau,u)$ coordinate system.
We now closely examine $\mu$ and $x$ as functions of $(\tau,u)$ near the preshock.
\begin{proposition}
  \label{prop:expansion1}
  In the setting of Theorem~\ref{thm:SF}, assume Condition~\ref{cond:genericity} and that $\psi(0, \anon) \in \m{C}^{k+1}$ for some $k \geq 3$.
  Then there exists $\eps'(\Theta) > 0$ such that if $\norm{v}_{\m{C}^3} \leq \eps'$:
  \begin{enumerate}[label = \textup{(\roman*)}, itemsep = 2pt]
  \item
    \label{item:unique-preshock}
    There exists a unique point $(t_*, u_0)$ with the smallest $\tau$ coordinate in the zero set of $\mu$.
    It lies in $\m{B}_{t_* + \delta}^{\textnormal{pre}}$.
    
  \item
    \label{item:mu-exp}
    Let $\gamma(\tau) \coloneqq \argmin \mu(\tau, \anon)$ for $\tau \geq \tfrac{1}{2}$.
    Then $\gamma(\tau) = u_0 + (t_* - t) E_{k-1}(\tau)$ and there exist constants $a_0,b_0 > 0$ such that
    \begin{equation}
      \label{eq:mu-exp}
      \mu(\tau, u) = a(\tau) (t_* - \tau) + 3 b(\tau) [u - \gamma(\tau)]^2 + [u - \gamma(\tau)]^3 E_{k - 3}(\tau, u)
    \end{equation}
    with $a(\tau) = a_0 + (t_* - \tau) E_{k - 1}(\tau)$ and $b(\tau) = b_0 + (t_* - \tau) E_{k - 3}(\tau)$.
   
 \item
   \label{item:x-exp}
   For the same functions $a(\tau)$ and $b(\tau)$, we have
   \begin{equation}
     \label{eq:x-in-tau-u}
     \begin{aligned}
       x(\tau,u) - x\big(\tau,\gamma(\tau)\big) = a(\tau) (t_* - \tau) [u - \gamma(\tau)&] + b(\tau) [u - \gamma(\tau)]^3\\
                                                                                      &+ [u - \gamma(\tau)]^4 E_{k - 3} (\tau,u).
     \end{aligned}
   \end{equation}
 \end{enumerate}
\end{proposition}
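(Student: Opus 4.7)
The strategy is to combine the explicit $(\tau, u)$-expression for the background $\hat\mu$ from Proposition~\ref{prop:sw} with the $C^k$-closeness of $\mu$ to $\hat\mu$ granted by Theorem~\ref{thm:SF}\ref{item:fund-smooth}. Once everything is written in $(\tau, u)$, all three conclusions reduce to the implicit function theorem plus Taylor's theorem with integral remainder and Hadamard's lemma. The background is entirely explicit: since $L\hat\lambda = 0$, $\hat\lambda = \hat\lambda(u)$, and from \eqref{eq:Lmu-simple} one gets $L\hat\mu = \partial_u\hat\lambda$, so $\hat\mu(\tau, u) = 1 + \tau \partial_u\hat\lambda(u)$. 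With the normalization $\min_u\partial_u\hat\lambda = -1$ at $u = u_0$ (as fixed in the paper after Proposition~\ref{prop:sw}), Condition~\ref{cond:genericity} gives $\partial_u^2\hat\lambda(u_0) = 0$ and $\partial_u^3\hat\lambda(u_0) > 0$, whence
\begin{equation*}
\hat\mu(\tau, u) = (1-\tau) + \tfrac{\tau}{2}\partial_u^3\hat\lambda(u_0)(u-u_0)^2 + O((u-u_0)^3),
\end{equation*}
i.e.\ the desired expansion for the background with $a_0 = 1$ and $b_0 = \tfrac{1}{6}\partial_u^3\hat\lambda(u_0) > 0$.

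For \ref{item:unique-preshock}, I would apply the implicit function theorem to $\partial_u\mu(\tau, u) = 0$. At the background preshock, $\partial_u^2\hat\mu(1, u_0) = \partial_u^3\hat\lambda(u_0) > 0$, and the $C^3$-smallness of $v$ together with Theorem~\ref{thm:SF}\ref{item:fund-smooth} gives $\partial_u^2\mu > 0$ in a neighborhood of $(1, u_0)$. IFT therefore yields a $C^{k-1}$ critical curve $\gamma(\tau)$ of $\mu(\tau, \anon)$; since the background global argmin $u \equiv u_0$ is strict and uniform on $\tau \in [\tfrac{1}{2}, t_* + \delta]$, the $\eps$-perturbation preserves unique global minimization, extending $\gamma$ across the whole range. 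Setting $m(\tau) \coloneqq \mu(\tau, \gamma(\tau))$ and using $\partial_u\mu(\tau, \gamma(\tau)) = 0$, one gets $m'(\tau) = \partial_\tau\mu(\tau, \gamma(\tau)) = -1 + O(\eps) < 0$, so $m$ vanishes at a unique $t_* = 1 + O(\eps)$ with $\mu(t_*, u) > 0$ for $u \ne \gamma(t_*)$, proving \ref{item:unique-preshock} with $u_0 \coloneqq \gamma(t_*)$; Hadamard's lemma applied to $\gamma - u_0$ at $\tau = t_*$ then delivers $\gamma(\tau) = u_0 + (t_* - \tau)E_{k-1}(\tau)$.

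For \ref{item:mu-exp}, Taylor's theorem in $u$ about $\gamma(\tau)$ with the integral form of the remainder gives
\begin{equation*}
\mu(\tau, u) = m(\tau) + 3b(\tau)[u-\gamma(\tau)]^2 + [u-\gamma(\tau)]^3 E_{k-3}(\tau, u),
\end{equation*}
where $3b(\tau) \coloneqq \tfrac{1}{2}\partial_u^2\mu(\tau, \gamma(\tau))$ and the remainder is the integral of $\tfrac{(1-s)^2}{2}\partial_u^3\mu(\tau, \gamma(\tau) + s[u-\gamma(\tau)])$ against $ds$, which lies in $C^{k-3}_{\tau, u}$. Hadamard's lemma in $\tau$, applied to $m$ (vanishing at $t_*$ with derivative $-1 + O(\eps)$) and to $b - b_0$, yields the factorizations $m(\tau) = a(\tau)(t_* - \tau)$ and $b(\tau) = b_0 + (t_* - \tau) E_{k-3}(\tau)$ with $a_0 = -m'(t_*) = 1 + O(\eps) > 0$ and $b_0 = \tfrac{1}{6}\partial_u^3\hat\lambda(u_0) + O(\eps) > 0$. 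Part \ref{item:x-exp} is then immediate from \eqref{eq:der-x}: since $\partial_u x = \mu$, integrating \eqref{eq:mu-exp} in $u$ from $\gamma(\tau)$ to $u$ and recognizing $\int_{\gamma(\tau)}^u [v - \gamma(\tau)]^3 E_{k-3}(\tau, v)\,dv$ as $[u-\gamma(\tau)]^4$ times a $C^{k-3}$ function (by rescaling $v = \gamma(\tau) + s[u-\gamma(\tau)]$) produces exactly \eqref{eq:x-in-tau-u}.

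The main obstacle will be the careful bookkeeping of regularity indices: each use of IFT, Hadamard, or the integral Taylor remainder consumes derivatives, and matching the stated $E_{k-1}$ and $E_{k-3}$ classes requires attention, particularly since $\gamma$ appears composed inside $\partial_u^2\mu$ to produce $b(\tau)$. A secondary subtlety is globalizing $\gamma$ on $[\tfrac{1}{2}, t_* + \delta]$ rather than just a neighborhood of $(t_*, u_0)$, and verifying that $\gamma(\tau)$ remains the unique \emph{global} minimizer of $\mu(\tau, \anon)$; this is where the fact that $\hat\gamma \equiv u_0$ is a strict global minimum of $\hat\mu(\tau,\anon)$ uniformly on the interval, together with the $\eps$-smallness of $\mu - \hat\mu$ in $C^k$, makes the perturbative argument go through.
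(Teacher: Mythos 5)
Your proposal follows essentially the same route as the paper's proof: exploit the explicit background structure $\hat\mu = 1 + \tau\,\partial_u\hat\lambda$ together with Condition~\ref{cond:genericity}, use the smallness of the perturbative unknowns to locate the unique minimizing curve $\gamma(\tau)$ (local convexity near the background argmin plus strictness of the background minimum away from it), then factor $m(\tau)=\mu(\tau,\gamma(\tau))$ and $b(\tau)$ via Taylor/Hadamard and integrate $\partial_u x=\mu$ to get \ref{item:x-exp}. The differences are cosmetic---the implicit function theorem versus the paper's intermediate-value-plus-convexity argument for producing $\gamma$, and your (correct) $a_0=-m'(t_*)$ where the paper has a sign slip---while the derivative bookkeeping you flag ($E_{k-1}$ for $\gamma$ and $a$, $E_{k-3}$ for $b$ and the remainders) is handled at a comparable level of brevity in the paper by appealing to the evolution equations of Proposition~\ref{prop:schematiceqs}.
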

\noindent
The constant $\eps'$ must be sufficiently small that Theorem~\ref{thm:SF} can be applied and the perturbation $v$ does not disrupt the strong nondegeneracy of $\Theta$ from Condition~\ref{cond:genericity}.
For this reason, $\eps'$ controls the third derivative of $v$.
If $\Theta$ is just barely nondegenerate in the sense that $0 < \partial_u^3 \h\lambda \ll 1$, then $\eps'$ must be correspondingly small.
\begin{proof}
  We begin with the structure of the background simple wave $\Theta$.
  By Proposition~\ref{prop:sw}~\ref{item:simple}, $\partial_\tau \muT$ is constant in time.
  By hypothesis, $\Theta$ first shocks at time $1$, so Proposition~\ref{prop:sw}~\ref{item:first} implies that $\min \partial_\tau \muT = -1$.
  Condition~\ref{cond:genericity} states that this minimum is nondegenerate and achieved at a single point $\h u \in \R$.
  Recalling that $\Theta \in \m{C}^\infty$, it follows that for $c_0 \coloneqq \partial_{\tau u u} \h \mu(\h u) > 0$,
  \begin{equation}
    \label{eq:background-nondeg}
    \partial_\tau \h \mu = -1 + \tfrac{c_0}{2}(u - \h u)^2 + (u - \h u)^3 E_\infty(u), \quad \partial_{\tau u} \muT = c_0(u - \h u) + (u - \h u)^2 E_\infty(u).
  \end{equation}
  
  Turning to the full solution, we find
  \begin{equation}
    \label{eq:mixed}
    \partial_{\tau u} \mu = \partial_{\tau u} \muT + \partial_{\tau u} \mup = c_0(u - \h u) + (u - \h u)^2 E_\infty(u) + \partial_{\tau u} \mup.
  \end{equation}
  When $\tau = 0$, $\partial_u \mu = \partial_u \h \mu = \partial_u \mup = 0$.
  We can thus integrate from $\tau = 0$ to find
  \begin{equation}
    \label{eq:muu}
    \partial_u \mu(\tau, u) = c_0 \tau (u - \h u) + \tau (u - \h u)^2 E_\infty(u) + \partial_u \mup.
  \end{equation}
  In the remainder of the proof, assume $\tau \geq 1/2$ and $\norm{\mup}_{\m{C}^2} \leq \eta$ for a parameter $\eta(\Theta) > 0$ that we may shrink from line to line.
  Let $z \coloneqq \norm{\partial_u \mup}_\infty \leq \eta$.
  If $\eta$ is sufficiently small, the linear term in \eqref{eq:muu} dominates the quadratic, and we find
  \begin{equation*}
    \pm \partial_u \mu(\tau, \h u \pm 4 c_0^{-1}z) \geq \frac{z}{2} \geq 0.
  \end{equation*}
  By the intermediate value theorem, there exists a root $\gamma(\tau)$ of $\partial_u \mu(\tau, \anon)$ such that
  \begin{equation}
    \label{eq:gamma-bd}
    \gamma - \h u \lesssim \norm{\mup}_{\m{C}^1}.
  \end{equation}
  Differentiating \eqref{eq:muu}, \eqref{eq:gamma-bd} yields
  \begin{equation*}
    \partial_{uu} \mu(\tau, \gamma) = c_0 \tau + (\gamma - \h u) E_\infty(\gamma) + \partial_{uu} \mup(\tau, \gamma) = c_0 \tau + \m{O}(\norm{\mup}_{\m{C}^2}).
  \end{equation*}
  Perhaps after shrinking $\eta$, there thus exists $r > 0$ such that
  \begin{equation}
    \label{eq:convex}
    \partial_{uu}\mu(\tau, u) \geq \frac{c_0 \tau}{4} > 0 \quad \text{if } \abs{u - \gamma} \leq r.
  \end{equation}

  Next, we write $\mu = \h \mu + \bar{\mu}$.
  Recalling that $\h \mu|_{\tau = 0} \equiv 1$ and $\partial_\tau \h \mu$ is constant, we have
  \begin{equation*}
    \mu = 1 + \tau \partial_\tau \h \mu + \mup.
  \end{equation*}
  Now
  \begin{equation*}
    \inf_{\abs{u - \h u} \geq r/2} \partial_\tau \h \mu > \inf \partial_\tau \h \mu = -1.
  \end{equation*}
  Hence if $\eta$ is sufficiently small, $\mu(\tau, \anon)$ achieves its global minimum within $r/2$ of $\h u$.
  Using \eqref{eq:gamma-bd}, we can arrange $\abs{\gamma - \h u} \leq r/2$ if $\eta \ll 1$.
  Then \eqref{eq:convex} implies that $\mu(\tau, \anon)$ is uniformly convex within $r/2$ of $\h u$.
  It follows that the global minimum of $\mu(\tau, \anon)$ is unique, and must be $\gamma(\tau)$.
  That is, $\gamma(\tau) = \argmin \mu(\tau, \anon)$.
  This shows \ref{item:unique-preshock}, for the earliest root $u_0 = \gamma(t_*)$ of $\mu$ is unique.

  Now, $\gamma$ is differentiable by the implicit function theorem.
  Differentiating $\partial_u \mu(\tau, \gamma) = 0$ in $\tau$, we see that
  \begin{equation*}
    \gamma' = - \frac{\partial_{\tau u}\mu}{\partial_{uu}\mu}\Big|_{(\tau, \gamma)}.
  \end{equation*}
  The denominator is bounded away from zero by \eqref{eq:convex}.
  For the numerator, \eqref{eq:mixed}, \eqref{eq:gamma-bd}, and Proposition~\ref{prop:schematiceqs} yield
  \begin{equation*}
    \partial_{\tau u} \mu(\tau, \gamma) \lesssim \abs{\gamma - \h u} + \abs{\partial_{\tau u} \mup} \lesssim \norm{\partial_u \mup}_\infty + \norm{\partial_{\tau u} \mup}_\infty \lesssim \norms{\Php}_{\m{C}^1}.
  \end{equation*}
  Hence $\norm{\gamma}_{\m{C}^1} \lesssim \norms{\Php}_{\m{C}^1}.$
  Expanding expressions for higher derivatives, we see that $\norm{\gamma}_{\m{C}^k} \lesssim \norms{\Php}_{\m{C}^k} < \infty$ by Theorem~\ref{thm:SF}.
  Thus by Taylor's remainder theorem,
  \begin{equation*}
    \gamma(\tau) = u_0 + (t_* - \tau) E_{k-1}(\tau).
  \end{equation*}

  Now let $b(\tau) \coloneqq \tfrac{1}{6}\partial_{uu} \mu\big(\tau, \gamma(\tau)\big) \geq \tfrac{c_0\tau}{24}$, where we have used \eqref{eq:convex}.
  Then Theorem~\ref{thm:SF} yields $\norm{b}_{\m{C}^{k-2}} \lesssim \norm{\mu}_{\m{C}^k} < \infty$.
  Setting $b_0 \coloneqq b(t_*) > 0$, Taylor's remainder theorem yields
  \begin{equation*}
    b(\tau) = b_0 + (t_* - \tau) E_{k - 3}(\tau).
  \end{equation*}
  Moreover, applying the remainder theorem in $u$ about $\gamma$ to $\mu$, we find
  \begin{equation*}
    \mu(\tau, u) = m(\tau) + 3 b(\tau) (u - \gamma)^2 + (u - \gamma)^3 E_{k-3}(\tau, u)
  \end{equation*}
  for $m(\tau) \coloneqq \mu\big(\tau, \gamma(\tau)\big)$.
  Noting that $m(t_*) = 0$, let $a(\tau)$ satisfy $m(\tau) = (t_* - \tau) a(\tau)$.
  Differentiating, \eqref{eq:background-nondeg} and \eqref{eq:gamma-bd} yield
  \begin{equation*}
    m' = \partial_\tau \mu(\tau, \gamma) = \partial_\tau \h \mu + \partial_\tau \mup = -1 + \m{O}(\abs{\gamma - \h u}^2 + \abs{\partial_\tau \mup}) = -1 + \m{O}(\norm{\mup}_{\m{C}^1}).
  \end{equation*}
  Thus if $\eta$ is sufficiently small, $a_0 \coloneqq m'(t_*) > 0$.
  Moreover, Proposition~\ref{prop:schematiceqs} and \eqref{eq:gamma-bd} imply that
  \begin{equation*}
    m'' = \partial_{\tau\tau} \mu + \gamma' \partial_{\tau u} \mu = \partial_{\tau \tau} \mup + \gamma'(\partial_{\tau u}\h \mu + \partial_{\tau u} \mup) \lesssim \norms{\Php}_{\m{C}^1}.
  \end{equation*}
  Similar reasoning implies that $\norm{m}_{\m{C}^{k+1}} \lesssim \norms{\Php}_{\m{C}^k} < \infty$, so the remainder theorem yields
  \begin{equation*}
    m(\tau) = a(\tau)(t_* - \tau) \And a(\tau) = a_0 + (t_* - \tau) E_{k-1}(\tau).
  \end{equation*}
  We have now verified every component of \ref{item:mu-exp}.
  Next, \eqref{eq:mu} implies $\partial_u x = \mu$.
  Integrating \eqref{eq:mu-exp} in $u$ from $\gamma$, we obtain \ref{item:x-exp}.

  Finally, we have assumed throughout that $\norm{\mup}_{\m{C}^2} \leq \eta(\Theta)$.
  By Corollary~\ref{cor:HigherEsts}, this holds provided $\norm{v}_{\m{C}^3} \leq \eps'(\Theta)$ for sufficiently small $\eps' > 0$.
\end{proof}

\subsection{A convenient gauge}

We now have an expression \eqref{eq:x-in-tau-u} for $x$ in terms of $\tau = t$ and $u$.
At a high level, this resembles the prototypical example of a preshock in the Burgers equation: the cubic cusp $\fu$ satisfying
\begin{equation}
  \label{eq:fu}
  x = - a_0 t \fu + b_0 \fu^3.
\end{equation}
We now use gauge symmetry to bring \eqref{eq:x-in-tau-u} into close alignment with \eqref{eq:fu}.

To begin, we observe that the class of hyperbolic conservation laws is preserved under Galilean transformations.
That is, if we set $\bar{t} \coloneqq t + t_0$ and $\bar{x} \coloneqq x - vt - x_0$ for $t_0,x_0,v \in \R$, then a quick calculation shows that \eqref{eq:psi} is equivalent to
\begin{equation}
  \label{eq:psi-gauge}
  \partial_{\bar t} \psi + \bar{A}(\psi) \partial_{\bar x} \psi = 0
\end{equation}
for $\bar{A} \coloneqq A - v \op{Id}$.
The operation $A \mapsto \bar{A}$ shifts all eigenvalues by the constant $v$ and preserves the eigenvectors.
Thus $\bar{A}$ inherits strictly hyperbolicity and the genuine nonlinearity in Condition~\ref{cond:gennon} from $A$.
It follows that our entire hyperbolic analysis of \eqref{eq:psi} applies to \eqref{eq:psi-gauge} as well.

The free parameters $(t_0,x_0)$ and $v$ represent spacetime shifts and spatial shear, respectively.
We choose these to simplify \eqref{eq:x-in-tau-u}.
Let $(-t_0,x_0) \coloneqq \big(t_*, x(t_*, u_0)\big)$ be the spacetime location of the preshock in Proposition~\ref{prop:expansion1} and let $v \coloneqq \lambda \circ \psi(t_0, x_0)$ be the speed of the shocking characteristic at the preshock.
After our Galilean transformation, we have shifted to a moving frame in which the preshock occurs at the spacetime origin with zero shocking speed.
Due to our choice of temporal sign, time now begins at $t_0 = - T < 0$.

We are free to make two further choices of ``labeling.''
Shifting the data of the eikonal function $u$ by $u_0$, we can assume that $u(0,0) = 0.$
Similarly, composing $\bar{A}$ with a shift in $\m{V}$, we can replace $\psi$ by $\psi - \psi(0,0)$, so that $\psi(0,0) = 0$.
In sum:
\begin{proposition}
  In the setting of Proposition~\ref{prop:expansion1}, we can use gauge freedoms to arrange the following:
  \begin{enumerate}[label = \textup{(\roman*)}, itemsep = 2pt]
  \item The preshock forms at $(0,0)$ in both the $(t,x)$ and $(\tau,u)$ coordinate systems.
  \item The shocking eigenvalue vanishes at the preshock: $\lambda(0,0) = 0$.
  \item The state variables vanish at the preshock: $\psi(0,0) = 0$.
  \end{enumerate}
\end{proposition}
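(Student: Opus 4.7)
The plan is to implement the three normalizations using the three independent gauge freedoms identified by the author: a Galilean spacetime transformation, a shift of the eikonal label, and a translation in state space. The key observation is that each of these preserves every structural hypothesis used in Proposition~\ref{prop:expansion1}, and that they act independently enough that they can be composed without conflict.

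First, for (i) in physical coordinates together with (ii), I would apply the Galilean transformation $\bar t = t - t_*$, $\bar x = x - vt - x_0$ with $v \coloneqq \lambda\big(\psi(t_*, x(t_*, u_0))\big)$ and $x_0$ chosen so that the image of the preshock location $(t_*, x(t_*, u_0))$ is the spacetime origin in $(\bar t, \bar x)$. As the author already recorded, $\psi$ then satisfies a conservation law with modified advection $\bar A = A - v \mathrm{Id}$. Since $\bar A$ shares eigenvectors with $A$ and has eigenvalues shifted by the constant $-v$, strict hyperbolicity \ref{hyp:strict-hyp}, genuine nonlinearity \ref{hyp:gen-non} (which is a directional derivative of an eigenvalue, invariant under constant shifts), and the shocking simple-wave hypothesis \ref{hyp:simple} all transfer verbatim. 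The new shocking eigenvalue at the preshock is $\lambda - v = 0$, giving (ii).

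Second, for (i) in eikonal coordinates, I would replace $u$ by $u - u_0$, where $u_0$ is the eikonal label of the preshock from Proposition~\ref{prop:expansion1}~\ref{item:unique-preshock}. Because $L$ annihilates constants, the shifted function still satisfies \eqref{eq:eikonal}; because $\partial_x$ annihilates constants, the inverse foliation density $\mu = 1/\partial_x u$ is unchanged, and hence the entire hyperbolic and homogeneous-expansion analysis transfers unaffected. The preshock is now located at $u = 0$, completing (i). Finally, for (iii), I would translate in state space by setting $\tilde \psi \coloneqq \psi - \psi(0,0)$ and redefining the advection by $\tilde A(\tilde\psi) \coloneqq \bar A(\tilde\psi + \psi(0,0))$. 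This leaves the PDE \eqref{eq:psi} intact with $\tilde A$ in place of $\bar A$, preserves smoothness of $\tilde A$ via \ref{hyp:smooth-A}, and does not disturb the spacetime or eikonal coordinates at all, since the transformation acts only on the dependent variable.

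There is no substantive obstacle: the proof is a bookkeeping verification that the three operations act on disjoint gauge degrees of freedom (spacetime, eikonal label, state) and that each preserves the structural hypotheses \ref{hyp:smooth-A}--\ref{hyp:simple}, Conditions~\ref{cond:gennon}, \ref{cond:L1Th}, \ref{cond:MGHDstab}, and \ref{cond:genericity}. The mildest point requiring care is to check that the existence of the unique preshock point $(t_*, u_0)$ produced by Proposition~\ref{prop:expansion1} depends only on the intrinsic geometry (the vector field $L$ and the function $\mu$) rather than on any coordinate choice, so that pinning down this point before normalizing is unambiguous; but this is immediate since $L$ and $\mu$ are defined through $\psi$ and the eigenvalue $\lambda$, which are all manifestly compatible with the three gauge changes above.
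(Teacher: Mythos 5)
Your proposal is correct and follows essentially the same route as the paper: a Galilean transformation $\bar t = t - t_*$, $\bar x = x - vt - x_0$ with $v$ the shocking speed at the preshock (so $\bar A = A - v\,\mathrm{Id}$ inherits \ref{hyp:strict-hyp} and \ref{hyp:gen-non}), a shift of the eikonal label by $u_0$, and a state-space translation with the advection recomposed accordingly. The extra verifications you include (invariance of $\mu$, preservation of the structural hypotheses, and coordinate-independence of the preshock point) are the same bookkeeping the paper leaves implicit.
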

We now consider Proposition~\ref{prop:expansion1} in this gauge.
We have arranged
\begin{equation*}
  t_* = u_0 = \gamma(0) = x\big(0, \gamma(0)\big) = 0.
\end{equation*}
Using \eqref{eq:der-x}, $(\partial_\tau x) (0,0) = \lambda(0,0) = 0$.
And by Theorem~\ref{thm:SF}, $\partial_\tau^2 x = \partial_\tau \lambda = E_k$.
Hence by Taylor's remainder theorem,
\begin{equation}
  \label{eq:x-gamma}
  x\big(\tau,\gamma(\tau)\big) = \tau^2 E_k (\tau).
\end{equation}
We use this in our analysis of \eqref{eq:x-in-tau-u} below.

\subsection{Homogeneous functions}

Because $\partial_x u = \mu^{-1}$ blows up at the preshock, $u$ does not admit a traditional power series in $(t, x)$ about the origin.
However, we show that $u$ does admit an expansion in functions resembling powers of $\fu$ from \eqref{eq:fu}.
The terms in this expansion are determined by their scaling properties, which generalize those of $\fu$.
We recall the following terminology from~\cite{CG23}.
\begin{definition}
  Given $r \in \R$, a function $f : \R_{\le 0} \times \R \rightarrow \R$ is \emph{$r$-homogeneous} if
  \begin{equation*}
    f(\lambda^2 t,\lambda^3 x) = \lambda^r f(t,x) \ForAll \lambda > 0, t \leq 0, \text{ and } x \in \R.
  \end{equation*}
\end{definition}
\noindent
Using \eqref{eq:fu}, we can directly verify that $\fu$ is $1$-homogeneous.
We will show that $u$ admits an expansion in functions of integral homogeneity, with leading term $\cub$.
To this end, we make frequent use of a $-2$-homogeneous function $\cubder$ that approximates $\partial_xu = \mu^{-1}$:
\begin{equation*}
  \fm \coloneqq \partial_x \fu = \f 1 {a_0 \abs{t} + 3 b_0 \fu^2}.
\end{equation*}
Here we have used $t < 0$ to write $-t$ as $\abs{t}$.

Following \cite{CG23}, it is convenient to introduce the 1-homogeneous function
\begin{equation*}
  \fd \coloneqq \fm^{-\f 1 2} = (a_0 \abs{t} + 3 b_0 \fu^2)^{\f 1 2},
\end{equation*}
which approximates $\sqrt{\mu}$.
This is nonnegative and vanishes precisely at the preshock $(0, 0)$.
We use it as a homogeneous measure of proximity to the preshock.
For example, if $f$ is $r$-homogeneous and continuous away from the origin,
\begin{equation*}
  \abs{f(t, x)} = \dist^r \abss{f\left(\dist^{-2} t, \dist^{-3}x\right)} \leq \dist^r \sup_{\dist = 1} \abs{f} \lesssim \dist^r.
\end{equation*}
For this reason, we frequently use powers of $\dist$ to control error terms.

Our expansions will not involve arbitrary homogeneous functions.
Rather, we will only need polynomials in $t$, $\cub$, and $\cubder$.
We therefore collect a few useful identities relating these building blocks.
\begin{lemma}
  \label{lem:identities}
  We have $t = -a_0^{-1} \fm^{-1} + 3 a_0^{-1}b_0 \fu^2$,  $\rd_x \fu = \fm$, and $\rd_t \fu = a_0 \fu \rd_x \fu = a_0 \fu \fm$.
\end{lemma}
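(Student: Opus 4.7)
The plan is to derive all three identities directly from the defining cubic relation $x = -a_0 t\, \fu + b_0 \fu^3$ by algebraic manipulation and implicit differentiation. Since $t \leq 0$, we have $\abs{t} = -t$, so
\begin{equation*}
  \fm^{-1} = a_0\abs{t} + 3 b_0 \fu^2 = -a_0 t + 3 b_0 \fu^2.
\end{equation*}
Solving for $t$ gives $t = -a_0^{-1}\fm^{-1} + 3 a_0^{-1} b_0 \fu^2$, which is the first identity. The second identity $\rd_x \fu = \fm$ is immediate from the definition of $\fm$; I record it only to make the logical order of the lemma clear.

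For the third identity, I would differentiate the implicit relation $x = -a_0 t \fu + b_0 \fu^3$ with respect to $t$ at fixed $x$, yielding
\begin{equation*}
  0 = -a_0 \fu - a_0 t\, \rd_t \fu + 3 b_0 \fu^2\, \rd_t \fu = -a_0 \fu + \fm^{-1}\, \rd_t \fu,
\end{equation*}
where the second equality uses the expression for $\fm^{-1}$ above. Solving gives $\rd_t \fu = a_0 \fu\, \fm$, and combining with $\rd_x \fu = \fm$ gives the chain $\rd_t \fu = a_0 \fu\, \rd_x \fu = a_0 \fu\, \fm$. One small consistency check: differentiating the implicit relation in $x$ similarly yields $1 = \fm^{-1} \rd_x \fu$, recovering the definition of $\fm$ and confirming that $\fu$ is smooth away from the preshock so that the implicit differentiation is justified there.

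There is no serious obstacle; the only point that requires care is the sign convention $\abs{t} = -t$ on $\R_{\leq 0}$, which must be tracked consistently to avoid a spurious sign in the first identity. The implicit differentiations are valid wherever $\fm^{-1} = -a_0 t + 3 b_0 \fu^2 > 0$, i.e.\ away from the preshock $(t,x) = (0,0)$, which is the only place one could worry about degeneracy.
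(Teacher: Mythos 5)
Your proof is correct and is exactly the paper's argument: the paper's proof of this lemma is simply ``direct calculation and the chain rule,'' which is what you have carried out, with the sign convention $\abs{t} = -t$ and the nondegeneracy $\fm^{-1} = a_0\abs{t} + 3b_0\fu^2 > 0$ away from the preshock handled correctly.
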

\begin{proof}
  Direct calculation and the chain rule.
\end{proof}
This lemma exemplifies a broader principle: differentiation in $t$ and $x$ reduces homogeneity by $2$ and $3$, respectively.
Thus $x$ derivatives ``cost more'' than $t$ derivatives, and tend to make expressions more singular.

We also recall Lemma~2.1 of \cite{CG23}, which we express in the following form:
\begin{lemma}
  \label{lem:fuest}
  In $\R_{\leq 0} \times \R$, we have
  \begin{enumerate}[label = \textup{(\roman*)}, itemsep = 2pt]
  \item $|\fu| \asymp |x|^{\f 1 3}$ for $\abs{t} \ls |x|^{\f 2 3}$ and $|\fu| \asymp |x| |t|^{-1}$ for $|x|^{\f 2 3} \ls \abs{t}$.
  \item $\fm \asymp (|t| + |\fu|^2)^{-1}$ and $\fm \ls |\fu| |x|^{-1}$.
  \item $\fd \asymp |t|^{\f 1 2}$ for $|x|^{\f 2 3} \ls \abs{t}$ and $\fd \asymp |x|^{\f 1 3}$ for $|x|^{\f 2 3} \gs \abs{t}$.
  \end{enumerate}
\end{lemma}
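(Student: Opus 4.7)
The plan is to leverage the defining cubic $x = a_0|t|\fu + b_0\fu^3$ (recall $t \le 0$, so $-t = |t|$) together with $a_0, b_0 > 0$. Since the map $\fu \mapsto a_0|t|\fu + b_0\fu^3$ is strictly increasing and odd in $\fu$, the inverse function $\fu(t,x)$ shares a sign with $x$; it suffices to treat $x,\fu \ge 0$. The two asymptotic regimes in (i) correspond to which term in the cubic dominates, and I would separate them via the natural one-parameter rescaling in each.

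For (i) in the \emph{cubic regime} $|t| \lesssim |x|^{2/3}$, I would set $\fu = \alpha\,|x|^{1/3}$, reducing the defining relation to $b_0\alpha^3 + a_0|t||x|^{-2/3}\alpha = 1$ in which the middle coefficient is bounded by hypothesis. Strict monotonicity in $\alpha > 0$ then pins $\alpha$ between two positive constants depending only on $a_0, b_0$ and the implicit constant in $|t| \lesssim |x|^{2/3}$, yielding $|\fu| \asymp |x|^{1/3}$. In the complementary \emph{linear regime} $|t| \gtrsim |x|^{2/3}$, I would instead write $\fu = \beta\,|x|/|t|$; substitution gives $a_0\beta + b_0\beta^3(|x|^2/|t|^3) = 1$ with the coefficient of $\beta^3$ bounded, and the same monotonicity argument forces $\beta \asymp 1$, hence $|\fu| \asymp |x|/|t|$.

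Part (ii) should follow almost immediately. The first estimate $\fm \asymp (|t|+\fu^2)^{-1}$ is simply the definition $\fm^{-1} = a_0|t| + 3b_0\fu^2$ up to the positive constants $a_0, b_0$. For the second, dividing the defining cubic by $\fu$ gives $|x|/|\fu| = a_0|t| + b_0\fu^2 \asymp \fm^{-1}$, and rearranging yields $\fm \asymp |\fu|/|x|$, which is in fact stronger than the stated $\fm \lesssim |\fu|/|x|$.

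Part (iii) would then follow by substituting (i) into the definition $\fd = (a_0|t| + 3b_0 \fu^2)^{1/2}$. When $|x|^{2/3} \lesssim |t|$, part (i) yields $\fu^2 \asymp |x|^2/|t|^2 \lesssim |t|$ (since $|x|^2 \lesssim |t|^3$), so the $|t|$ summand dominates and $\fd \asymp |t|^{1/2}$; when $|t| \lesssim |x|^{2/3}$, part (i) gives $\fu^2 \asymp |x|^{2/3} \gtrsim |t|$, so the $\fu^2$ summand dominates and $\fd \asymp |x|^{1/3}$. There is no deep obstacle in this lemma: every step reduces to an elementary monotonicity or algebraic bound. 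The only care required is in verifying that the ``dominant'' term in each regime genuinely dominates uniformly, and the rescalings above make this transparent by reducing each regime to a one-parameter algebraic equation with a bounded defect coefficient.
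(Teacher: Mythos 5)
Your proof is correct: the rescalings $\fu=\alpha|x|^{1/3}$ and $\fu=\beta|x||t|^{-1}$ in the defining cubic $x=a_0|t|\fu+b_0\fu^3$, together with monotonicity in $\alpha,\beta$, give exactly the claimed two-regime asymptotics, and (ii), (iii) then follow from the definitions of $\fm$ and $\fd$ as you say (your two-sided bound $\fm\asymp|\fu||x|^{-1}$ is indeed stronger than what is stated). The paper itself offers no proof—it recalls the lemma from Lemma~2.1 of \cite{CG23}—and your dominant-balance argument is the standard route to these estimates, so nothing further is needed.
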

Our homogeneous expansions will only be relevant near the preshock, so we are free to restrict our analysis to a neighborhood of the origin.
We will choose a small parameter $\kappa > 0$ and work in the $(t, x)$ region
\begin{equation}
  \label{eq:expansion-region}
  \D \coloneqq [-\kappa, 0] \times [-\kappa, \kappa].
\end{equation}

\subsection{Homogeneous expansion for the eikonal function}
\label{sec:rhomexp}

We now develop a homogeneous expansion for $u$, broadly following the strategy of \cite{CG23}.
We first show that $u = \fu + \m{O}(\fd^2)$, and then linearize around $\cub$ to construct higher order homogeneous correctors.
The proof of the former has a somewhat different character, as the linearization about $\cub$ is singular, and we thus require relatively strong bounds on $u - \cub$ to make use of it.
We begin by showing such bounds.
Throughout, we assume $u$ satisfies the hypotheses of Proposition~\ref{prop:expansion1} for some $k \geq 3$.

To show $u \approx \cub,$ we observe that $u$ and $\fu$ are related to $x$ through similar equations.
Indeed, using Proposition~\ref{prop:expansion1} and \eqref{eq:x-gamma}, we can write
\begin{equation}
  \label{eq:uxfu}
  a_0 \abs{t} u + b_0 u^3 + F(t,u) = x = a_0 \abs{t} \fu + b_0 \fu^3,
\end{equation}
for
\begin{equation}
  \label{eq:ETS}
  F = t^2 E_{k-3} + tu^2 E_{k-3} + u^2 E_{k - 3}.
\end{equation}
We next show that both $u$ and $\cub$ are small near the preshock.
\begin{lemma}
  In the region $\D$ defined in \eqref{eq:expansion-region}, $|u|, \abs{\cub} \lesssim \kappa^{\f 1 3}$ if $\kappa$ is sufficiently small.
\end{lemma}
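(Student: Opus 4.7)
The plan is to dispatch $\fu$ by directly inverting its defining cubic, and to handle $u$ by a short bootstrap that exploits \eqref{eq:uxfu} together with continuity of $u$ at the preshock. For $\fu$, since $t \le 0$ and $a_0, b_0 > 0$, the map $\fu \mapsto a_0 |t| \fu + b_0 \fu^3$ from \eqref{eq:fu} is odd and strictly increasing in $\fu$. Hence $\fu$ and $x$ share a sign, so
\begin{equation*}
|x| \;=\; a_0 |t| |\fu| + b_0 |\fu|^3 \;\ge\; b_0 |\fu|^3,
\end{equation*}
and $|x| \le \kappa$ on $\D$ gives $|\fu| \le (\kappa / b_0)^{1/3} \lesssim \kappa^{1/3}$.

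For $u$, we first note that $u$ is continuous on $\D$ with $u(0,0) = 0$: the map $(\tau, u) \mapsto (\tau, x(\tau, u))$ is smooth and surjective onto a neighborhood of the preshock (Theorem~\ref{thm:SF} and Proposition~\ref{prop:expansion1}), and our gauge places the preshock at the origin with eikonal label $0$. Hence, for any fixed $\delta_0 > 0$, shrinking $\kappa$ ensures $|u| \le \delta_0$ throughout $\D$. Treating $F$ in \eqref{eq:ETS} as a higher-order remainder of the form $|F(t,u)| \lesssim t^2 + |t| u^2 + u^4$, and using that $a_0 |t| u + b_0 u^3$ shares a sign with $u$, the identity \eqref{eq:uxfu} yields
\begin{equation*}
b_0 |u|^3 \;\le\; |x| + |F(t, u)| \;\lesssim\; \kappa + \kappa \delta_0^2 + \delta_0 |u|^3,
\end{equation*}
where we have used $|t| \le \kappa$ together with the bootstrap bound $u^4 \le \delta_0 |u|^3$. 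Choosing $\delta_0$ small enough to absorb the final term into $\tfrac{1}{2} b_0 |u|^3$, and then shrinking $\kappa$, produces $|u|^3 \lesssim \kappa$, whence $|u| \lesssim \kappa^{1/3}$.

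The main obstacle is the self-starting step of the bootstrap: only once $|u|$ is already known to be small can we treat the quartic remainder in $F$ as subordinate to the cubic leading order $b_0 u^3$. Continuity of $u$ at the preshock---a direct consequence of the $(\tau, u)$-smoothness established in Theorem~\ref{thm:SF}---supplies exactly this preliminary smallness for free, after which the scalar algebraic inequality closes without further work. The sign argument, rather than any fixed-point or contraction scheme, is what lets us extract a clean $b_0 |u|^3$ from the a priori unsigned combination $a_0 |t| u + b_0 u^3$ on the left-hand side.
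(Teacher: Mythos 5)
Your bound for $\fu$ is correct, and is just an explicit version of what the paper gets immediately from Lemma~\ref{lem:fuest}. For $u$, however, you take a genuinely different route. The paper argues only on the slice $t=0$, where \eqref{eq:mu-exp} gives $\partial_u x = \mu = 3b_0u^2 + \m{O}(u^3)$, hence $\abs{x} \geq b_0\abs{u}^3/2$ and $\abs{u} \lesssim \abs{x}^{1/3}$ there; it then transports this bound to $t<0$ using that $u$ is constant along the $I_0$-characteristics, whose speed is bounded by $K = \sup\abs{\lambda}$, so the characteristic through any $(t,x) \in \D$ lands at $(0,y)$ with $\abs{y} \leq (K+1)\kappa$. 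That transport step is what lets the paper avoid any a priori control of $u$ at negative times. You instead work pointwise on all of $\D$ with the algebraic relation \eqref{eq:uxfu} and close by absorbing the quartic remainder, which requires the preliminary smallness $\abs{u} \leq \delta_0$ on $\D$; granted that, your absorption argument is correct (note that your reading $\abs{F} \lesssim t^2 + \abs{t}u^2 + u^4$ is the intended meaning of \eqref{eq:ETS}, whose displayed last term $u^2E_{k-3}$ must be $u^4E_{k-3}$, as the paper's later use of $F$ confirms).

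The one soft spot is the justification of that preliminary bound. ``Smooth and surjective'' does not by itself give continuity of $(t,x) \mapsto u$; the missing ingredient is injectivity, i.e.\ strict monotonicity of $x(\tau,\anon)$ in $u$, which holds because $\partial_u x = \mu > 0$ away from the single preshock point (Proposition~\ref{prop:expansion1}~\ref{item:unique-preshock}). With monotonicity in hand you do not even need continuity: evaluating \eqref{eq:uxfu} at $u = \pm\delta_0$ gives, for $\delta_0$ small and then $\abs{t}$ small depending on $\delta_0$,
\begin{equation*}
  \pm\, x(t,\pm\delta_0) \;\geq\; \tfrac{b_0}{2}\delta_0^3 - C\big(t^2 + \abs{t}\delta_0^2\big) \;\geq\; \tfrac{b_0}{4}\delta_0^3,
\end{equation*}
so $\abs{x} \leq \kappa < \tfrac{b_0}{4}\delta_0^3$ forces $\abs{u(t,x)} \leq \delta_0$ by monotonicity. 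With that one-line repair your proof is complete; the paper's characteristic-transport argument buys the same conclusion without invoking \eqref{eq:uxfu} at negative times at all.
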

\begin{proof}
  For $\cub$, this is immediate from Lemma~\ref{lem:fuest}.
  Therefore consider $u$.

  Let $K \coloneqq \sup \abs{\lambda} < \infty$.
  When $t = 0$, \eqref{eq:mu-exp} implies that $\partial_u x = \mu = 3b_0 u^2 + \m{O}(u^3)$.
  Integrating in $u$, we see that we can choose $\kappa$ sufficiently small that $\abs{x} \geq b_0\abs{u}^3/2$ for all $\abs{x} \leq (K + 1) \kappa$.
  Rearranging, $\abs{u} \lesssim \abs{x}^{1/3} \lesssim \kappa^{1/3}$.

  Now recall that $u$ is conserved along the $I_0$ characteristic curves.
  For any $(t, x) \in \D$, the characteristic through $(t, x)$ travels at speed $\abs{\lambda} \leq K$ and thus terminates at a location $(0, y)$ satisfying $\abs{y} \leq \abs{x} + K \kappa \leq (K + 1) \kappa$.
  By our work above, we have $\abs{u(t, x)} = \abs{u(0, y)} \lesssim \kappa^{1/3}$.
\end{proof}
We bootstrap this into a stronger bound on $u$.
\begin{lemma}
  \label{lem:uest}
  In $\D$, $|u| \ls |\fu| + |t|$ and $|\mu| \ls \dist^2$ if $\kappa$ is sufficiently small.
\end{lemma}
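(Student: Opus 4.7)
The plan is to first obtain the pointwise bound $|u| \lesssim |\fu| + |t|$ directly from the relation \eqref{eq:uxfu}, and then deduce $|\mu| \lesssim \fd^2$ by substituting this estimate into the expansion \eqref{eq:mu-exp}.

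Setting $w \coloneqq u - \fu$ and using the factorization $u^3 - \fu^3 = w(u^2 + u\fu + \fu^2)$, subtracting the two expressions for $x$ in \eqref{eq:uxfu} gives
\begin{equation*}
  w\bigl[a_0 |t| + b_0(u^2 + u\fu + \fu^2)\bigr] = -F(t,u).
\end{equation*}
Crucially, $u^2 + u\fu + \fu^2 = (u + \fu/2)^2 + \tfrac{3}{4}\fu^2 \geq \tfrac{3}{4}\fu^2$ regardless of the value of $u$, so the coefficient in brackets is bounded below by $a_0|t| + \tfrac{3b_0}{4}\fu^2 \asymp \fd^2$ uniformly. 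Hence $|w| \lesssim |F(t,u)|/\fd^2$, and the whole problem is reduced to estimating this ratio.

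To control the ratio, I would run a continuous bootstrap on $|u| \leq K(|\fu| + |t|)$ in $\D$ for some constant $K$. The structural form \eqref{eq:ETS} of $F$ has dominant pieces $t^2$, $|t| u^2$, and genuinely higher-order contributions in $u$. Invoking the regime split $|t| \leq \fu^2$ versus $|t| > \fu^2$ from Lemma~\ref{lem:fuest}, so that $\fd^2 \asymp \max(|t|,\fu^2)$, the ratios $t^2/\fd^2$, $|t|\fu^2/\fd^2$, and $\fu^4/\fd^2$ are each controlled by $|t|$ or $\fu^2$. Combined with the bootstrap hypothesis, this yields $|w| \lesssim |t| + C_K \fu^2$ for a constant $C_K$ depending on $K$. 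Since $|\fu| \leq \kappa^{1/3}$ on $\D$ by the previous lemma, for $\kappa$ sufficiently small the term $C_K \fu^2$ is absorbed into $|\fu|$, improving the bootstrap and closing the argument.

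For $|\mu| \lesssim \fd^2$, I would substitute the gauge-normalized values $t_* = 0$, $\gamma(t) = t E_{k-1}$, $a(0) = a_0$, and $b(0) = b_0$ into \eqref{eq:mu-exp}. By the first part of the lemma, $|u - t E_{k-1}| \lesssim |\fu| + |t|$, hence $(u - t E_{k-1})^2 \lesssim \fu^2 + |t|$ on $\D$, while the cubic remainder in \eqref{eq:mu-exp} is at most $(\fu^2 + |t|)^{3/2}$, which is negligible compared to $\fd^2$ for $\kappa$ small. Summing these contributions yields $|\mu| \lesssim a_0|t| + 3 b_0 \fu^2 + \text{(smaller)} \asymp \fd^2$. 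The main obstacle is the bookkeeping in the bootstrap: each piece of $F$ demands a separate regime analysis via Lemma~\ref{lem:fuest}, and the $K$-dependent constants must be absorbed by the smallness of $\kappa$; once this is handled, the remainder of the proof is purely algebraic.
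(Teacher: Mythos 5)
Your proposal is correct in substance but takes a genuinely different route from the paper for the first bound. The paper never subtracts the two relations in \eqref{eq:uxfu}: it works in the regime $|u| \geq C|t|$ (the complementary regime being trivial), absorbs $F$ into the cubic so that the reverse triangle inequality gives $|x| \asymp |u|(|t| + u^2)$, and then identifies $|u| \asymp |\fu|$ through the two cases $|t| \lesssim u^2$ and $u^2 \lesssim |t|$ of Lemma~\ref{lem:fuest}; no bootstrap is needed. Your subtraction-and-factorization, with the key observation $u^2 + u\fu + \fu^2 \geq \tfrac{3}{4}\fu^2$ so that the factor multiplying $u - \fu$ is $\gtrsim \fd^2$, is a legitimate alternative, and it actually yields the stronger statement that $u - \fu$ itself is small, anticipating Proposition~\ref{prop:usolve}, whereas the paper's argument only produces the upper bound on $|u|$. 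The treatment of $\mu$ is essentially identical in the two arguments.

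Two remarks on the execution. First, your bootstrap is avoidable: the bracket is also bounded below by $b_0(u^2 + u\fu + \fu^2) \geq \tfrac{b_0}{2}(u^2 + \fu^2)$, so every $u$-dependent term of $F$ can be absorbed directly using only the crude bound $|u| \lesssim \kappa^{1/3}$ from the preceding lemma: for instance $|t|u^2$ divided by the bracket is $\lesssim \min(|t|, u^2)$, and the quartic term divided by the bracket is $\lesssim u^2 \lesssim \kappa^{1/3}|u|$, which is absorbed into the left side for $\kappa$ small. This sidesteps the loose ends your continuous bootstrap leaves open---where the base case holds, in which parameter the continuity argument runs, and openness at the corner $(0,0)$ where $|\fu| + |t|$ vanishes---as well as the constant bookkeeping ($K$ must be fixed before $\kappa$, and the $K$-dependent contributions to the coefficient of $|t|$ must be routed through a factor of $\kappa$). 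These are fixable, but as written they are the weakest part of the plan. Second, both your argument and the paper's implicitly read the last term of \eqref{eq:ETS} as quartic in $u$ (your ``genuinely higher-order contributions''); taken literally, $u^2 E_{k-3}$ would defeat both proofs, but the later extraction of the coefficient $c_{0,4}$ of $\fu^4$ shows the displayed $u^2 E_{k-3}$ is a typo for $u^4 E_{k-3}$, so your reading is the intended one.
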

\begin{proof}
  For the moment, suppose $\abs{t} \leq \abs{u}/C$ for some $C > 0$.
  Provided $C$ is sufficiently large and $\kappa$ sufficiently small, \eqref{eq:ETS} yields $\abs{F} \leq \tfrac{1}{2}\abs{u}(a_0 \abs{t} + b_0 u^2)$.
  It follows from \eqref{eq:uxfu} and the reverse triangle inequality that
  \begin{equation}
    \label{eq:cubic-bound}
    \abs{x} \asymp \abs{u}(\abs{t} + u^2).
  \end{equation}
  We consider two cases depending on which term dominates the right side of \eqref{eq:cubic-bound}.
  If $a_0 \abs{t} \leq b_0 u^2$, then \eqref{eq:cubic-bound} yields $\abs{u} \asymp \abs{x}^{1/3}$.
  Then $\abs{t} \lesssim u^2 \asymp \abs{x}^{2/3}$, so by Lemma~\ref{lem:fuest}, $\abs{u} \asymp \abs{x}^{1/3} \asymp \abs{\cub}$.
  If on the other hand $b_0 u^2 \leq a_0 \abs{t}$, then \eqref{eq:cubic-bound} implies that $\abs{u} \asymp \abs{x} \abs{t}^{-1}$, so in turn $\abs{x}^2 \abs{t}^{-2} \asymp \abs{u}^2 \lesssim \abs{t}$.
  Rearranging, we see that $\abs{x}^{2/3} \lesssim \abs{t}$.
  Hence by Lemma~\ref{lem:fuest}, $\abs{u} \asymp \abs{x} \abs{t}^{-1} \asymp \abs{\cub}$.

  In either case, we have $\abs{u} \asymp \abs{\cub}$.
  This all holds provided $\abs{u} \geq C \abs{t}$, so in general we have $\abs{u} \lesssim \abs{\cub} + \abs{t}$.
  
  For $\mu$, we can write \eqref{eq:mu-exp} as
  \begin{equation}
    \label{eq:mu-err}
    \mu = a_0 \abs{t} + 3 b_0 u^2 + t^2 E_{k-3} + tu E_{k-3} + u^3 E_{k-3}.
  \end{equation}
  If $\kappa$ is sufficiently small, the first two terms dominate and $\mu \asymp \abs{t} + u^2$.
  Then our bound on $u$ implies that $\mu \lesssim \abs{t} + \cub^2 \lesssim \dist^2$.
\end{proof}
We can now determine the leading term in $u$.
\begin{proposition}
  \label{prop:usolve}
  In $\D$, $u = \fu + \m{O}(\fd^2)$ and $\rd_x u = \mu^{-1} = \fm + \m{O}(\fd^{-1})$.
  In general, for all $m + n \leq k - 2$,
  \begin{equation}
    \label{eq:usolve-higher}
    \rd_t^m \rd_x^n u = \rd_t^m \rd_x^n \fu + \m{O}(\fd^{- 2 m - 3 n + 2}).
  \end{equation}
\end{proposition}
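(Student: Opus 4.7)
The plan is to leverage the implicit equation \eqref{eq:uxfu} for $u$ together with the expansion \eqref{eq:mu-exp} for $\mu$, reducing everything to explicit algebraic comparisons with the cubic model $\fu$. I first establish the base cases $u = \fu + \m{O}(\fd^2)$ and $\partial_x u = \fm + \m{O}(\fd^{-1})$, then induct on $m + n$ for the general statement \eqref{eq:usolve-higher}.

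For $u - \fu$, I subtract the two sides of \eqref{eq:uxfu} and factor $u^3 - \fu^3 = (u - \fu)(u^2 + u\fu + \fu^2)$ to obtain
\[
(u - \fu)\bigl[a_0|t| + b_0(u^2 + u\fu + \fu^2)\bigr] = -F(t,u).
\]
Since $u^2 + u\fu + \fu^2 \geq \tfrac{3}{4}\fu^2$, the bracket is bounded below by a positive multiple of $a_0|t| + b_0\fu^2 \asymp \fd^2$. Lemma~\ref{lem:uest} gives $|u| \lesssim |\fu| + |t| \lesssim \fd$, and combined with the structure \eqref{eq:ETS} this forces $|F| \lesssim \fd^4$; dividing gives $|u - \fu| \lesssim \fd^2$. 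For the spatial derivative, $\partial_x u = \mu^{-1}$ from \eqref{eq:mu}, and in the chosen gauge \eqref{eq:mu-exp} simplifies to $\mu = a_0|t| + 3b_0 u^2 + \m{O}(\fd^3)$. Comparing with $\fm^{-1} = a_0|t| + 3b_0\fu^2$ and using $|u^2 - \fu^2| \leq |u - \fu|(|u| + |\fu|) = \m{O}(\fd^3)$ gives $\mu - \fm^{-1} = \m{O}(\fd^3)$, so $\mu^{-1} - \fm = (\fm^{-1} - \mu)/(\mu \fm^{-1}) = \m{O}(\fd^{-1})$ because $\mu, \fm^{-1} \asymp \fd^2$.

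For the general bound, I would induct on $m + n$. Iterated implicit differentiation of $x = a_0|t|u + b_0 u^3 + F(t,u)$ expresses $\partial_t^m \partial_x^n u$ as a rational function of $u$ and the $(t,u)$-derivatives of $\mu$ and $F$, with a power of $\mu$ in the denominator; the same procedure applied to $x = a_0|t|\fu + b_0\fu^3$ governs $\partial_t^m \partial_x^n \fu$ via $\fm^{-1}$. Because $\mu$ and $\fm^{-1}$ share the same homogeneous polynomial structure in $(|t|, u^2) \asymp (\fd^2, \fd^2)$ and differ only by $\m{O}(\fd^3)$, subtracting the two formulas leaves only subleading contributions of the claimed order $\fd^{2-2m-3n}$. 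Time derivatives may alternatively be processed via the eikonal identity $\partial_t u = -\lambda \partial_x u$ and the analogue $\partial_t \fu = a_0 \fu \fm$ from Lemma~\ref{lem:identities}, with $\lambda = \m{O}(\fd)$ in our gauge ensuring each $\partial_t$ costs only $\fd^{-2}$.

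The main obstacle is propagating these cancellations through the combinatorial explosion of iterated differentiation. Although each individual cancellation between $u$ and $\fu$ is transparent, each $\partial_x$ acting on a negative power of $\mu$ generates multiple new terms and worsens the singularity by $\fd^{-3}$, so we must verify at every order that the leading singular pieces of the $u$- and $\fu$-formulas match identically, leaving residues only of the correct subleading size. The homogeneity framework, in which $|t|$, $u^2$, $\fu^2$, $\mu$, and $\fm^{-1}$ all scale like $\fd^2$, makes this matching natural at each term. The regularity cap $m + n \leq k - 2$ in the statement directly reflects that the corrector $F$ lies in the class $E_{k-3}$ in the $(t,u)$ chart, so classical $(t,x)$-differentiation closes only up to this order.
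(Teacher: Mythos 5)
Your argument is correct and lands on the same estimates, but at leading order it takes a genuinely different route from the paper. The paper proves $u = \fu + \m{O}(\fd^2)$ by composing with the spatial inverse of $u(t,\anon)$: it defines a shift $h$ with $\fu(t,x-h) = u(t,x)$, shows $h = F(t,u)$ so that $|h| \lesssim \fd^4$, and then applies the mean value theorem together with a two-case analysis ($|t| \lesssim |x|^{2/3}$ versus $|x|^{2/3} \lesssim |t|$, via Lemma~\ref{lem:fuest}) to control $\rd_x\fu$ at the intermediate point. You instead subtract the two cubic relations in \eqref{eq:uxfu} and factor the difference of cubes, using the lower bound $a_0|t| + b_0(u^2 + u\fu + \fu^2) \ge a_0|t| + \tfrac34 b_0\fu^2 \asymp \fd^2$; this removes both the inverse map and the case analysis, and is in fact the same algebraic identity the paper only deploys later, in \eqref{eq:u-corrector}, to extract the second corrector---so your approach arguably unifies the leading-order step with the higher-order scheme. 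Both arguments rest on Lemma~\ref{lem:uest} and on $|F| \lesssim \fd^4$; note that \eqref{eq:ETS} as printed has last term $u^2 E_{k-3}$, which would only give $\fd^2$, but the intended term is quartic in $u$ (as the paper's own claim $|h|\lesssim\fd^4$ and the later coefficient $c_{0,4}\fu^4$ show), so you and the paper rely on the same fact. Your treatment of $\rd_x u = \mu^{-1}$ (invert $\mu = \fm^{-1} + \m{O}(\fd^3)$, using $\mu,\fm^{-1} \asymp \fd^2$) coincides with the paper's, and your sketch of the induction for \eqref{eq:usolve-higher}---implicit differentiation of the cubic relation, or alternatively $\rd_t u = -\lambda \rd_x u$ with homogeneity bookkeeping---is at essentially the same level of detail as the paper's one-line ``apply higher derivatives to these identities,'' so no gap relative to the paper's own standard.
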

\begin{proof}
  For each $t \geq 0$, $u(t, \anon)$ is a bijection $\R \to \R$.
  Let $u^{\circ - 1}$ denote its inverse in the spatial variable and define $h(t, x) \coloneqq x - u^{\circ -1}\big(t, u(t, x)\big)$, so $\cub(t, x - h) = u(t, x)$.
  Using \eqref{eq:uxfu}, we obtain
  \begin{equation*}
    x - h = a_0 \abs{t} \cub(t, x - h) + b_0 \cub^3(t, x - h) = a_0 \abs{t} u + b_0 u^3 = x - F.
  \end{equation*}
  That is, $h(t, x) = F\big(u(t, x)\big)$.
  Combining \eqref{eq:ETS} and Lemma~\ref{lem:uest}, we find $\abs{h} \lesssim \dist^4$.

  Now, the mean value theorem implies that
  \begin{equation}
    \label{eq:MVT}
    u(t,x) = \fu\big(t,x + h(t,x)\big) = \fu(t,x) + \rd_x \fu\big(t,x + \zeta(t,x)\big) h(t, x)
  \end{equation}
  for some $\zeta$ between $0$ and $h$ (which may be negative).

  We again treat two cases.
  Suppose $\abs{t} \lesssim \abs{x}^{2/3}$, in which case Lemma~\ref{lem:fuest} yields $h \lesssim \abs{x}^{4/3}$.
  Then $\abs{\zeta} \leq \abs{h} \ll \abs{x}$, so $\abs{t} \lesssim \abs{x}^{2/3} \asymp \abs{x + \zeta}^{2/3}$.
  Using Lemma~\ref{lem:fuest} again, we find
  \begin{equation*}
    \partial_x\cub(t, x + \zeta) = \cubder(t, x + \zeta) \asymp \abs{x}^{-2/3} \asymp \dist^{-2}.
  \end{equation*}
  It follows that $\abss{\rd_x \fu\big(t,x + \zeta) h} \lesssim \dist^2$.

  Conversely, if $\abs{x}^{2/3} \lesssim \abs{t}$, then Lemma~\ref{lem:fuest} implies that $h \lesssim \abs{t}^2$.
  On the other hand, $\partial_x \cub = (a_0\abs{t} + b_0 \cub^2)^{-1} \lesssim \abs{t}^{-1}$, so in this case $\abss{\rd_x \fu\big(t,x + \zeta) h} \lesssim \abs{t} \asymp \dist^2$.
  Having treated each case, we conclude from \eqref{eq:MVT} that $u = \cub + \m{O}(\dist^2)$.
  
  For $\partial_x u = \mu^{-1}$, we observe that \eqref{eq:mu-err} and our bounds on $u$ yield $\mu = \cubder^{-1} + \m{O}(\dist^3)$.
  Inverting, we find $\partial_xu = \cubder + \m{O}(\dist)$.
  Then $\partial_t u = -\lambda \partial_x u$ allows us to treat $\partial_t u$, and we can inductively apply higher derivatives to these identities to obtain \eqref{eq:usolve-higher}.
\end{proof}
Having found the leading homogeneous part of $u$, we can now iteratively solve for higher order corrections.
These will be polynomials in $t,$ $\cub$, and $\cubder$.
We illustrate the procedure by finding the second term in the expansion.
Writing $v \coloneqq u - \cub$, \eqref{eq:uxfu} yields
\begin{equation}
  \label{eq:u-corrector}
  a_0 \abs{t} v + 3 b_0 \fu^2 v + 3 b_0 \fu v^2 + b_0 v^3 + F(t,u) = 0.
\end{equation}
By Proposition~\ref{prop:usolve}, $a_0 \abs{t} v + 3 b_0 \fu^2 v + 3 b_0 \fu v^2 = \cubder^{-1} v + \m{O}(\dist^3)$.
Moreover, using the proposition in \eqref{eq:ETS}, we see that there exist constants $c_{2,0},c_{1,2},c_{0,4} \in \R$ such that $F(t, u) = c_{2,0}t^2 + c_{1,2}t \cub^2 + c_{0,4}\cub^4 + \m{O}(\dist^5)$.
We can thus rearrange \eqref{eq:u-corrector} to find
\begin{equation*}
  v = -\cubder  (c_{2,0}t^2 + c_{1,2}t \cub^2 + c_{0,4}\cub^4) + \m{O}(\dist^3).
\end{equation*}
The main term of $v$ is a polynomial in $(t, \cub, \cubder)$ of homogeneity $2$.
Once again, by iteratively differentiating, we can show that this relation holds in a $\m{C}^{k-2}$ sense.
Repeating this process to higher order, we obtain a homogeneous expansion for $u$.
At each stage, we effectively apply Taylor's remainder theorem to the functions $E_{k-3}$ in $F$ to extract its next homogeneous part.
We can do so $k - 3$ times, which corresponds to an expansion to homogeneity $k - 2$.
\begin{proposition}
  \label{prop:eikonal-hom}
  For every $h \in [k-2]$, there is a polynomial $q_h$ in $(t,\cub,\cubder)$ with terms of homogeneity at most $h$ such that for every $m,n \in \N_0$ with $h + m + n \le k - 2$,
  \begin{equation*}
    \rd_t^m \rd_x^n u = \rd_t^m \rd_x^n q_h + \m{O}(\fd^{h - 2 k - 3 n + 1}) \quad \text{in } [t_0, 0) \times \R.
  \end{equation*}
  Moreover, $q_1 = \cub$.
\end{proposition}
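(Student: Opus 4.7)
The plan is to proceed by induction on $h$, using the master hodograph-type identity \eqref{eq:uxfu} as the vehicle. The base case $h = 1$ is precisely Proposition~\ref{prop:usolve}: it gives $u = \fu + \m{O}(\fd^2)$ together with the matching derivative bounds \eqref{eq:usolve-higher}, so we may set $q_1 = \fu$. The worked example of the second step immediately following \eqref{eq:u-corrector} already contains the essential calculation; we need only systematize it and carry it along the induction while tracking the $\m{C}^{k-2}$ budget.

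For the inductive step, suppose $q_h$ has been constructed with the stated property, and set $R_h := u - q_h$. Substituting $u = q_h + R_h$ into \eqref{eq:uxfu} and separating the part linear in $R_h$ yields
\begin{equation*}
  (a_0 |t| + 3 b_0 q_h^2) R_h = a_0 |t|(\fu - q_h) + b_0 (\fu^3 - q_h^3) - F(t, q_h + R_h) - 3 b_0 q_h R_h^2 - b_0 R_h^3.
\end{equation*}
The coefficient on the left is exactly $\fm^{-1}$ when $q_h = \fu$, and more generally $\fm^{-1} + 3 b_0 (q_h^2 - \fu^2)$; since $q_h - \fu$ already has leading homogeneity $\ge 2$ by the induction, this coefficient equals $\fm^{-1}\bigl(1 + \m{O}(\fd)\bigr)$ and is invertible with leading symbol $\fm$. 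The right-hand side is then expanded term by term in $(t,\fu,\fm)$. For the polynomial pieces this uses the inductive form of $q_h$ and $R_h$; for $F(t, q_h + R_h)$ we apply Taylor's theorem to each smooth factor $E_{k-3}$ in \eqref{eq:ETS} in its second argument about $\fu$, producing an additional polynomial contribution plus a smoother remainder. Multiplying by $\fm$ and collecting, the new polynomial contribution is an $(h+1)$-homogeneous polynomial $p_{h+1}$ in $(t,\fu,\fm)$; set $q_{h+1} := q_h + p_{h+1}$, so that $R_{h+1} = R_h - p_{h+1}$ has residual homogeneity $\ge h+2$, as required.

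The derivative estimates are obtained by commuting $\rd_t$ and $\rd_x$ through the same identity. The homogeneity calculus is internally consistent: by Lemma~\ref{lem:identities}, $\rd_x\fu = \fm$ and $\rd_t\fu = a_0 \fu \fm$, while $\fm$ itself is smooth on $[t_0,0)\times\R \setminus \{0\}$ with $\rd_x\fm = -6 b_0 \fu \fm^2$ and $\rd_t\fm = -a_0 (\sgn t)\fm^2 + \cdots$, so each $\rd_t$ costs two powers of $\fd$ and each $\rd_x$ costs three. This is exactly the loss recorded in the stated error, and the base case bounds \eqref{eq:usolve-higher} feed the induction. The smoothness of $x$ in $(\tau,u)$ from Theorem~\ref{thm:SF}~\ref{item:fund-smooth}, propagated through Proposition~\ref{prop:expansion1}, ensures that every quantity we differentiate is as smooth as claimed.

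The main obstacle is the tight regularity budget. Each inductive step Taylor-expands the smooth coefficients $E_{k-3}$ one further time in their second argument, so after $h-1$ iterations we are working with remainders of class only $E_{k-2-h}$; this is precisely the reason the proposition terminates at $h = k-2$ and why both $h$ and the derivative count $m+n$ compete for the same stock. The second delicate point is that inversion of the linear coefficient costs a factor of $\fm = \m{O}(\fd^{-2})$ at every stage, so before extracting $p_{h+1}$ one must verify that each error produced by the Taylor step is strictly of homogeneity smaller than the piece it replaces; this is where Lemma~\ref{lem:fuest}, together with the explicit polynomial form of $q_h$ in $(t,\fu,\fm)$, does the necessary bookkeeping and prevents any term from contributing at a lower homogeneity than it should.
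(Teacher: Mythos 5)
Your proposal is correct and takes essentially the same route as the paper: the paper likewise proves Proposition~\ref{prop:eikonal-hom} by iterating the identity \eqref{eq:uxfu} with Proposition~\ref{prop:usolve} as the base case, inverting the coefficient $a_0\abs{t}+3b_0\fu^2=\fm^{-1}+\m{O}(\fd^3)$, Taylor-expanding the $E_{k-3}$ factors in $F$ to extract each successive homogeneous corrector (the displayed $h=2$ computation after \eqref{eq:u-corrector}), and differentiating the identity for the derivative bounds, with the same regularity bookkeeping that halts the expansion at homogeneity $k-2$. The only slip is cosmetic: $\partial_x\fm=-6b_0\fu\fm^3$ (not $\fm^2$), which is precisely what makes each $\partial_x$ cost three powers of $\fd$ as you assert.
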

Now recall that Theorem~\ref{thm:SF} ensures that the fundamental unknowns $\Ph$ admit Taylor series in $(\tau, u)$ to order $k$.
Substituting the homogeneous expansion for $u$ into these series, we find analogous expansions for all the fundamental unknowns, and in particular $\psi$.
This completes the proof of our informal Theorem~\ref{thm:hom-informal}.

In the following, let $\partial_{I_0} \lambda$ denote the derivative of $\lambda$ in direction $r_{I_0}$ in $\m{V}$.
By \eqref{eq:psixrhs}, $\partial_{I_0} \lambda = -\xi_{I_0 I_0}^{I_0}$.
This derivative is nonzero by the genuine nonlinearity in Condition~\ref{cond:gennon}.
We also recall the constant $a_0 > 0$ from Proposition~\ref{prop:expansion1}~\ref{item:mu-exp}.
\begin{theorem}
  \label{thm:psi-hom-exp}
  Let $\psi$ satisfy the hypotheses of Proposition~\ref{prop:expansion1} for some $k \geq 3$.
  Then for every $h \in [k-2]$, there is a polynomial $p_h$ in $(t,\cub,\cubder)$ with terms of homogeneity at most $h$ such that for every $m,n \in \N_0$ with $h + m + n \le k - 2$, we have
  \begin{equation*}
    \rd_t^m \rd_x^n \psi = \rd_t^m \rd_x^n p_h + \m{O}(\fd^{h - 2 m - 3 n + 1}) \quad \text{in } [t_0, 0) \times \R.
  \end{equation*}
  Moreover, $p_1 = -a_0 \partial_{I_0} \lambda(0)^{-1} \cub r_{I_0}(0)$.
\end{theorem}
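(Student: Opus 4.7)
The plan is to combine the $(\tau, u)$-smoothness of $\psi$ from Theorem~\ref{thm:SF}~\ref{item:fund-smooth} with the homogeneous expansion of the eikonal function from Proposition~\ref{prop:eikonal-hom}. In the chosen gauge $\psi(0, 0) = 0$ and $\psi$ is $\m{C}^{k-1}$ in $(\tau, u) = (t, u)$ near the preshock, so Taylor-expand $\psi$ about $(0, 0)$:
\begin{equation*}
  \psi(t, u) = \sum_{1 \le \abs{\alpha} \le k - 1} c_\alpha\, t^{\alpha_1} u^{\alpha_2} + R(t, u),
\end{equation*}
with $R = \m{O}\big((\abs{t} + \abs{u})^{k-1}\big)$ in a $\m{C}^{k-1}$ sense and constant vectors $c_\alpha \in \m{V}$. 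Substitute $u = q_{k-2}(t, \fu, \fm) + \m{O}(\fd^{k-1})$ from Proposition~\ref{prop:eikonal-hom} into every monomial. Using Lemma~\ref{lem:identities} to rewrite $t$ as $-a_0^{-1} \fm^{-1} + 3 a_0^{-1} b_0 \fu^2$ whenever convenient, each $t^{\alpha_1} u^{\alpha_2}$ becomes a polynomial in $(t, \fu, \fm)$ plus a remainder controlled by a power of $\fd$. Collecting contributions by total homogeneity (weights $2$, $1$, $-2$ for $t$, $\fu$, $\fm$) defines $p_h$ as the sum of all terms of homogeneity at most $h$.

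The leading-order calculation is explicit: $p_1 = \partial_u \psi(0, 0)\, \fu$, because every other Taylor monomial has homogeneity at least $2$ (in particular, the $t$-term is homogeneity~$2$). Writing $\partial_u = \mu \partial_x$ and decomposing in the eigenbasis of $A$ gives $\partial_u \psi = \Phs r_{I_0} + \mu \phi^{I'} r_{I'}$; at the preshock $\mu(0, 0) = 0$, so $\partial_u \psi(0, 0) = \Phs(0, 0)\, r_{I_0}(0)$. Equation~\eqref{eq:Lmu} evaluated at the origin reduces to $L \mu(0, 0) = \partial_{I_0} \lambda(0)\, \Phs(0, 0)$, the non-shocking contribution vanishing by the $\mu$ factor. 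On the other hand, Proposition~\ref{prop:expansion1}~\ref{item:mu-exp} in the shifted gauge $t_* = 0 = \gamma(0)$ gives $L \mu(0, 0) = \partial_t \mu(0, 0) = -a_0$. Solving yields $\Phs(0, 0) = -a_0 / \partial_{I_0} \lambda(0)$ and hence $p_1 = -a_0 \partial_{I_0} \lambda(0)^{-1}\, \fu\, r_{I_0}(0)$.

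For the derivative estimates, differentiate the identity $\psi = p_h + (\text{remainder})$ via the chain rule. The cleanest route converts spatial derivatives using $\partial_x = \mu^{-1} \partial_u$ and $\partial_t = \partial_\tau - \lambda \partial_x$, so that mixed $(t, x)$-derivatives of $\psi$ reduce to $(\tau, u)$-smooth quantities multiplied by powers of $\mu^{-1} = \fm + \m{O}(\fd^{-1})$. Since $\fm$ has homogeneity $-2$, each $\partial_x$ costs homogeneity $-2$ in the outermost factor but also introduces $\partial_x u = \fm + \m{O}(\fd^{-1})$ in the chain rule; similarly each $\partial_t$ costs homogeneity $-2$ through Lemma~\ref{lem:identities}. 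Matched against Proposition~\ref{prop:eikonal-hom}'s derivative statement that $\partial_t^m \partial_x^n(u - q_h) = \m{O}(\fd^{h - 2m - 3n + 1})$, the errors in $\psi$ land precisely at the desired $\m{O}(\fd^{h - 2m - 3n + 1})$ bound.

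The main obstacle is the bookkeeping of these substitutions under repeated differentiation: each $\partial_x$ introduces the singular factor $\fm \sim \fd^{-2}$, so naive estimates rapidly lose control, and one must verify that every new factor of $\fd^{-2}$ or $\fd^{-3}$ is matched by a corresponding gain in the remainder of $u - q_h$. I expect an induction on $m + n$ to execute this cleanly, exploiting that polynomials in $(t, \fu, \fm)$ are closed under $\partial_t$ and $\partial_x$ by Lemma~\ref{lem:identities} (so the leading polynomial part differentiates without loss), while the remainder estimate from Proposition~\ref{prop:eikonal-hom} has been calibrated precisely to absorb the singular derivative factors.
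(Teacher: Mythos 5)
Your proposal is correct, and the bulk of it — Taylor-expanding $\psi$ in the eikonal coordinates $(\tau,u)$ (Theorem~\ref{thm:SF}~\ref{item:fund-smooth}), composing with the homogeneous expansion of $u$ from Proposition~\ref{prop:eikonal-hom}, using Lemma~\ref{lem:identities} to close the class of polynomials in $(t,\fu,\fm)$ under differentiation, and reducing $p_1$ to $\partial_u\psi(0)\,\fu$ with $(\partial_u\psi)^{I'}(0)=0$ via boundedness of $\phi^{I'}$ and $\mu(0)=0$ — is exactly the paper's argument, just spelled out in more detail than the paper's one-line ``composition'' step. Where you genuinely diverge is the evaluation of the coefficient $(\partial_u\psi)^{I_0}(0)=\Phs(0)$: the paper linearizes $\lambda(\psi)=\nabla\lambda(0)\cdot\psi+\m{O}(|\psi|^2)$ and extracts the leading $-1$-homogeneous part of the eikonal equation $Lu=0$, namely $a_0\cub\cubder+\partial_{I_0}\lambda(0)(\partial_u\psi)^{I_0}(0)\cub\cubder=0$, whereas you evaluate the transport equation \eqref{eq:Lmu} at the preshock (the nonshocking term killed by the factor $\mu\to0$) and match it against $\partial_\tau\mu$ at the preshock, which equals $-a_0$ by Proposition~\ref{prop:expansion1}~\ref{item:mu-exp}. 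Both give $\Phs(0)=-a_0/\partial_{I_0}\lambda(0)$; your route buys a purely pointwise computation using quantities already known to extend continuously to the preshock, avoiding any further homogeneous bookkeeping for $\lambda(\psi)$, while the paper's route is the one that generalizes if one wants the higher homogeneous terms of $\lambda$ and $\psi$ simultaneously. One small notational caution: the quantity you want is $L\mu=\partial_\tau\mu$ (the derivative at fixed $u$), not $\partial_t\mu$ at fixed $x$; writing it as $\partial_t\mu(0,0)$ is harmless in spirit here (the gauge makes $\lambda(0,0)=0$ and $\partial_u\mu$ vanishes along the critical curve), but the clean statement is $\partial_\tau\mu(0,0)=-a_0$ read off from \eqref{eq:mu-exp}.
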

\begin{proof}
  By Theorem~\ref{thm:SF}~\ref{item:fund-smooth}, $\psi$ is $\m{C}^k$ in $(\tau, u)$.
  Thus the expansion $p_h$ follows from composition with Proposition~\ref{prop:eikonal-hom}.
  The same proposition implies that $p_1 = \partial_u \psi(0) \cub$, so it remains to compute $\partial_u \psi(0).$

  This is a vector, and we first show that its nonshocking components are zero.
  Indeed, Theorem~\ref{thm:SF}~\ref{item:fund-smooth} implies that $(\partial_x \psi)^{I'} = \mu^{-1}(\partial_u \psi)^{I'} \lesssim 1$.
  Multiplying by $\mu$ and using $\mu(0) = 0$, we see that $(\partial_u \psi)^{I'}(0) = 0$.
  So
  \begin{equation}
    \label{eq:parallel}
    \partial_u \psi(0) = (\partial_u \psi)^{I_0}(0) r_{I_0}(0).
  \end{equation}

  Because $\lambda(0) = 0$, we have $\lambda(\psi) = \nab \lambda(0) \cdot \psi + \m{O}(\abs{\psi}^2)$ near the origin.
  By \eqref{eq:parallel}, the leading homogeneous part is $\partial_{I_0} \lambda(0) (\partial_u \psi)^{I_0}(0) \cub$, where $\partial_{I_0}$ indicates differentiation in $\m{V}$ in direction $r_{I_0}(0)$.
  Using Lemma~\ref{lem:identities} and Proposition~\ref{prop:eikonal-hom}, we can extract the leading $-1$-homogeneous part of \eqref{eq:eikonal}:
  \begin{equation*}
    a_0 \cub \cubder + \partial_{I_0} \lambda(0) (\partial_u \psi)^{I_0}(0) \cub \cubder = 0.
  \end{equation*}
  It follows that
  \begin{equation*}
    (\partial_u \psi)^{I_0}(0) = -\frac{a_0}{\partial_{I_0} \lambda(0)}.
  \end{equation*}
  Combined with $p_1 = \partial_u \psi(0) \cub$ and \eqref{eq:parallel}, the theorem follows.
\end{proof}

\appendix

\section{Causality and hyperbolic developments}
\label{sec:appendix1}

In this appendix, we define the notions of causality and domain of dependence used in the present paper.
The definitions we provide are less general and phrased differently than usual, but they capture the spirit of the standard definitions and allow us to provide a self-contained treatment.
We refer the reader to \cite{CouHil89}, \cite{Rau12}, and \cite{Rin13} for further details.

Let $\s{V} \coloneqq \{V_1, \dots, V_N\}$ be a set of Lipschitz vector fields on an open subset of $\R^{1 + 1}$.
We view these as the characteristic fields of a hyperbolic system in $\R^{1 + 1}$.
We make the simplifying assumption that the $\partial_t$ component of $V_i$ is positive and independent of $i$, while the $\partial_x$ components are strictly ordered by $i$.
That is,
\begin{equation*}
  V_i = f(t, x) \partial_t + h_i(t, x) \partial_x
\end{equation*}
for $f > 0$ and $h_1 < \ldots < h_N$.
Suppose also that $h_1 < 0 < h_N$.
While convenient, these assumptions are not necessary.
What really matters is the direction of $V_i$, so one could rescale each by potentially different positive factors without changing anything significant.
The following notions are implicitly defined relative to $\s{V}$.
\begin{definition}
  Given $p \in \R^{1+1}$, let $\calC_p$ denote the double cone in $T_p \R^{1 + 1} \setminus \{0\}$ determined by $V_1$ and $V_N$.
  Let $\calC_p^\pm$ denote the cone corresponding to $\pm V_1$ and $\pm V_N$.
\end{definition}
The cone $\m{C}_p$ separates timelike and spacelike vectors.
Precisely:
\begin{definition}
  The set $\calT_p^+$ of future-directed timelike vectors is the connected component of $\rd_t$ in $T_p \R^{1 + 1} \setminus (\m{C}_p \cup \{0\})$.
  The past-directed timelike vectors $\calT_p^-$ are defined analogously with $-\rd_t$ in place of $\rd_t$.
  Let $\calT_p \coloneqq \calT_p^+ \cup \calT_p^-$ denote the set of timelike vectors.
  We say a vector is spacelike if it is in neither $\m{C}_p$ nor $\m{T}_p$.
\end{definition}
\noindent
Roughly, curves that are nowhere spacelike model the propagation of signals.

The following definitions are simpler for $\m{C}^1$ curves, but we allow for Lipschitz curves because they appear in the present paper.
We thus consider tangent cones rather than tangent vectors.
The only possibly quite irregular object we consider is the boundary of the maximal development.
Everything else is at least piecewise $\m{C}^1$, so the reader can keep this situation in mind for the following definitions.
\begin{definition}
  A causal curve is any Lipschitz curve whose tangent cones are contained in $\calC_p \cup \calT_p$ everywhere.
  It is future-directed if increasing its parameter increases $t$, and past-directed otherwise.
\end{definition}
Meanwhile, spacelike curves provide a snapshot of the state of the system.
\begin{definition}
  A Lipschitz curve $\Sigma$ is a Cauchy hypersurface if its tangent cones are spacelike everywhere.
\end{definition}
We can now define regions on which we can solve the system of hyperbolic equations corresponding to $\s{V}$.
\begin{definition}
  A domain $\Omega \subset \R^{1+1}$ is globally hyperbolic with Cauchy hypersurface $\Sigma$ if, for any point $p \in \Omega$, the integral curve of any $V_i$ through $p$ intersects $\Sigma$ exactly once while remaining within $\Omega \cup \Sigma$.
\end{definition}
Intuitively, we imagine prescribing initial data on $\Sigma$.
Then $\Omega$ is globally hyperbolic if we can trace any point in $\Omega$ back to the data along a causal curve.
This notion is common in general relativity, where the curves are geodesics of a Lorentzian metric (see \cite{Rin13}).
More generally, this definition can be extended to hyperbolic operators in higher dimensions by replacing integral curves by bicharacteristic curves associated to the principal symbol of the operator.

Suppose we are given a hyperbolic PDE with principal part $\s{V}$ and initial data $\Psi_0$ on some Cauchy hypersurface $\Sigma$.
Assume we have a solution $\Psi$ to the PDE on a globally hyperbolic domain $\Omega \subset \R^{1 + 1}$ with Cauchy hypersurface $\Sigma$, and $\Psi|_\Sigma = \Psi_0$.
Then we say $(\Psi,\Omega)$ is a \emph{development} of the initial data $(\Psi_0,\Sigma)$.
\begin{definition}
  We say $(\Psi,\Omega)$ is a maximal globally hyperbolic development of $(\Psi_0,\Sigma)$ if there does not exist another development $(\Psi_1, \Omega_1)$ of $(\Psi_0,\Sigma)$ such that $\Omega \subset \Omega_1$ and $\Psi_1|_\Omega = \Psi$.
\end{definition}
Maximal globally hyperbolic developments are the ``largest'' developments possible.
Intuitively, we can extend a development so long as a singularity does not form.
Precisely, we can extend $(\Psi, \Omega)$ provided there is a point $p \in \partial \Omega \setminus \Sigma$ for which $\Psi$ is nonsingular at $p$ and in its causal past.
For more information on MGHDs in the context of the compressible Euler equations, we direct the reader to \cite{Christodoulou01,AbbSpe,AbbSpe23,ShkVic_2024}.

It is natural to ask whether maximal globally hyperbolic developments are unique.
If the solution satisfies uniform bounds on a given (not necessarily maximal) development, classical theory ensures its uniqueness.
However, uniqueness becomes delicate in the presence of singularities, and we refer the reader to \cite{EpReSb19} for more information on this fascinating topic.
Because singularities are often unavoidable in hyperbolic equations, grappling with this issue is a necessary part of understanding the uniqueness of maximal globally hyperbolic developments.
We refer the reader to forthcoming work of Abbrescia, Blue, Sbierski, and Speck providing constructions of provably unique MGHDs. We emphasize that we have not dealt with the issue of uniqueness in the present work, and we leave this as a direction for future investigation.

We close with a few more notions tied to causality.
\begin{definition}
  \label{def:causal}
  Given $p \in \R^{1+1}$, let $J^- (p)$ (resp. $J^+(p)$) denote the set of points $q$ that can be connected to $p$ by a Lipschitz causal curve that is past-directed (resp. future-directed) starting at $p$ and ending at $q$.
\end{definition}
We call $J^- (p)$ the causal past of $p$ and $J^+ (p)$ the causal future of $p$.
Intuitively, $J^- (p)$ is the set of points that can influence what happens at $p$, and $J^+ (p)$ is the set of points that $p$ can influence.

\printbibliography
\end{document}